\newtheorem{thm}{Theorem}[section]
\newtheorem{lemma}[thm]{Lemma}
\newtheorem{remark}[thm]{Remark}
\numberwithin{equation}{section}
\newcommand{\al}{\alpha}
\newcommand{\be}{\beta}
\newcommand{\ga}{\gamma}
\newcommand{\De}{\Delta}
\newcommand{\Om}{\Omega}
\newcommand{\Si}{\Sigma}
\newcommand{\la}{\lambda}
\newcommand{\ti}{\widetilde}
\newcommand{\R}{{\mathbb R}}
\begin{document}

\bibliographystyle{amsplain}

\title{\bf Normalized solutions to Sch\"odinger equations with  potential and inhomogeneous nonlinearities on large convex domains}

\author{\bf Thomas Bartsch\;\&\; \bf Shijie Qi\;\&\; Wenming Zou\footnote{Supported by NSFC  (No.12171265,  11025106, 12201482).} \\
}

\date{ }
\maketitle


\begin{abstract}\noindent
The paper addresses an open problem raised in \cite{Bartsch-Molle-Rizzi-Verzini2021} on the existence of normalized solutions to Schr\"odinger equations with potentials and inhomogeneous nonlinearities. We consider the problem 
\[
  -\De u+V(x)u+\la u = |u|^{q-2}u+\be |u|^{p-2}u, \quad \|u\|^2_2=\int|u|^2dx = \al,
\]
both on $\R^N$ as well as on domains $r\Om$ where $\Om\subset\R^N$ is an open bounded convex domain and $r>0$ is large. The exponents satisfy $2<p<2+\frac4N<q<2^*=\frac{2N}{N-2}$, so that the nonlinearity is a combination of a mass subcritical and a mass supercritical term. Due to the presence of the potential a by now standard approach based on the Pohozaev identity cannot be used. We develop a robust method to study the existence of normalized solutions of nonlinear Schr\"odinger equations with potential and find conditions on $V$ so that normalized solutions exist. Our results are new even in the case $\be=0$.
\vspace{0.3cm}\\
\small{\textbf{\emph{Keywords:}}\ nonlinear Schr\"odinger equation; normalized solutions; combined nonlinearities; potential.}
\vspace{0.3cm}\\
\small {\textbf{\emph {Mathematics Subject Classification.}}} 35J60 (35B09, 35A01).
\end{abstract}

\section{Introduction and main results}
The nonlinear Schr\"odinger equation
\begin{equation}\label{schr}
-i\frac{\partial \Phi}{\partial t}=\De \Phi -V(x)\Phi+f(|\Phi|)\Phi\quad\text{in}\; \R\times\R^N
\end{equation}
for a wave function $\Phi(t,x)\in\mathbb C$ is a fundamental equation in quantum mechanics, used to model, for instance, nonlinear optical problems or Bose-Einstein condensates; see e.g.\ \cite{Choi-Niu,Erdos-Schlein-Yau,Gross}. Since the remarkable experiments on Bose-Einstein condensates in dilutes gases of alkali atoms in 1995 \cite{Anderson-Ensher-Matthews-Wieman-Cornell} several types of potentials have been used, see e.g., \cite{Choi-Niu,Gupta-Murch,Ryu-Andersen}. Consequently, nonlinear Schr\"odinger equations have been a central topic in mathematics and theoretical physics during the last  decades.

\vskip 0.1in
In particular, many papers focus on standing wave solutions of \eqref{schr}, that is, solutions of the form
\begin{equation}
	\label{standing}
	\Phi(t, x)=e^{-i \la t} u(x).
\end{equation}
This ansatz leads to the equation
\begin{equation}\label{transchr}
  -\De u+V(x)u+\la u= f(|u|)u\quad\text{in}\; \R^N
\end{equation}
for the unknowns $u\in H^1(\R^N)$, $\la\in\R$. There are two different points of view concerning the ansatz \eqref{standing}: one can either prescribe the frequency $\la$ or the $L^2$ norm $\|u\|_2$. Recall that the $L^2$ norm of $\Phi(t,\cdot)$ is conserved in time for a solution $\Phi$ of \eqref{schr}. In spite of the clear physical relevance of solutions with prescribed $L^2$ norm their existence and properties are much less understood compared with solutions of \eqref{transchr} for prescribed $\la$. 
\vskip 0.1in

Normalized solutions correspond to critical points of the functional $E_V:H^1(\R^N)\to \R$ defined by
\begin{equation}\label{eq:E_V}
  E_V(u) = \frac{1}{2}\int_{\R^N}|\nabla u|^2dx + \frac{1}{2}\int_{\R^N}Vu^2dx - \int_{\R^N}F(u)dx
\end{equation}
on the mass constraint manifold
\begin{equation}\label{mass}
  S_\al=\left\{u\in H^1(\R^N): \|u\|_2^2=\al\right\},
\end{equation}
where $F(s)=\int_0^sf(|t|)t\,dt$. In fact, a critical point $u\in S_\al$ of $E_V|_{S_\al}$ is a solution \eqref{transchr} with $\la$ the corresponding Lagrange multiplier. The difficulty of this approach is the lack of the Palais-Smale condition, even if the nonlinearity is Sobolev subcritical. This is different from the variational approach corresponding to the fixed frequency problem. 
\vskip 0.1in

In the autonomous case, i.e.\ the potential $V$ is constant, an approach based on the Pohozaev identity and going back to Jeanjean's paper \cite{Jeanjean1997}, has been used successfully in recent years. The main ingredient of this approach is the use of the transformation $s*u(x)= e^{sN/2}u(e^sx)$ in order to find bounded Palais-Smale sequences. Recall that the identity $P(u):=\frac{d}{ds}E_V(s*u)=0$ is the Pohozaev identity. In \cite{Bartsch-Soave2017} it is proved, in the case $V=0$, that the Pohozaev manifold $\{u\in S_\al:P(u)=0\}$ can be used as natural constraint for the problem even though it may only be a topological manifold, depending on $f$. Jeanjean and Lu \cite{Jeanjean-Lu2020} and Bieganowski and Mederski \cite{Bieganowski-Mederski2021} improved these results by weakening the conditions in \cite{Jeanjean1997,Bartsch-Soave2017}. These papers are concerned with mass supercritical nonlinearities; a typical example is $f(|u|)u=|u|^{p-2}u$, $2+\frac4N<p<2^*$, or sums of such nonlinearities. Recently, Soave \cite{Soave2020-I,Soave2020-II} studied normalized solution of \eqref{transchr} with combined nonlinearity $f(|u|)u=|u|^{q-2}u+\be |u|^{p-2}u$, $2<p<2+\frac4N<q\le2^*$, by decomposing the Nehari-Pohozaev manifold in a subtle way using the more complicated geometric structure of the energy functional. We would also like to mention the papers \cite{Jeanjean-Le2021-I,Wei-Wu} for the multiplicity of normalized solutions in the Sobolev critical case, \cite{Bartsch-Jeanjean-Soave2016, Bartsch-Soave2017, Bartsch-Soave2019, Mederski-Schino2022} and references therein for systems of nonlinear Schr\"odinger equations, and \cite{Bellazzini-Jeanjean-Luo2013, Bonheure-Casteras-Gou-Jeanjean2019-I, Qi-Zou} and the references therein for the study of normalized solutions to other autonomous equations.
\vskip 0.1in

The methods of the papers mentioned above depend heavily on $V$ being constant and do not work for non-constant $V$, even in the case of a radial potential $V(x)=V(|x|)$. They also do not work on bounded domains. In the presence of a potential $V$ the existence of a normalized solution of \eqref{transchr} has been obtained recently by Ikoma and Miyamoto \cite{Ikoma-Miyamoto2020} provided $V\le0$, $V(x)\to0$ as $|x|\to\infty$ and $V$ satisfies additional technical conditions, and $f$ is mass subcritical, i.e.\ $f(|u|)u=o(|u|^{1+4/N})$ as $|u|\to\infty$. The conditions on $V$ have been considerably relaxed by Yang et al.\ \cite{Yang-Qi-Zou}. In the mass subcritical case one can try to minimize $E_V$ on $S_\al$ as in the seminal paper \cite{Lions1984} of P.L.\ Lions. For mass supercritical $f$ so far only the case of a homogeneous nonlinearity $f(|u|)u=|u|^{p-2}u$ has been considered. Noris et al. \cite{Noris-Tavares-Verzini2015} considered confining potentials where $V(x)\to\infty$ as $|x|\to\infty$, Bellazzini et al.\ \cite{Bellazzini-Boussaid-Jeanjean-Visciglia2017} treated partially confining potentials, and Bartsch et al.\ \cite{Bartsch-Molle-Rizzi-Verzini2021} decaying potentials, i.e.\ $V(x)\to0$ as $|x|\to\infty$. The existence of normalized solutions on bounded domains has been obtained in \cite{Noris-Tavares-Terracini-Verzini2012, Noris-Tavares-Verzini2014, Noris-Tavares-Verzini2019, Pierotti-Verzini2017}. In all these papers the homogeneity of the nonlinearity was essential. As mentioned in \cite{Bartsch-Molle-Rizzi-Verzini2021} the existence of normalized solutions of \eqref{transchr} is an open problem when $V$ is not constant and $f$ is mass supercritical and not homogeneous. But even in the autonomous case, the existence of normalized solutions of nonlinear Schr\"odinger equations with combined nonlinearities in bounded domains still remains open.
\vskip 0.1in

In the present paper, we address these open problems and study the existence and multiplicity of solutions of
\begin{equation}\label{eq}
\begin{cases}
  -\De u+V(x)u+\la u=|u|^{q-2}u+\be |u|^{p-2}u\quad \text{in}\  \Om,\\
  u\in H^1_0(\Om),\quad \int_{\Om}|u|^2dx=\al
\end{cases}
\end{equation}
when $\Om\subset\R^N$ is either a bounded smooth convex domain or all of $\R^N,$ $N\ge 3$, $2<p<2+\frac{4}{N}<q<2^*=\frac{2N}{N-2}$,
the mass $\al>0$ and the parameter $\be\in\R$ are prescribed. The frequency $\la$ is unknown and to be determined. We first investigate  the problem on large bounded convex domains and then consider the compactness of normalized solutions $u_r$ in $B_r$ as the radius $r$ tends to infinity, finally obtaining a normalized solution in $\R^N$.  It would be interesting to investigate whether the bifurcation theory approach developed in \cite{Bartsch-Zhong-Zou2021} can also be employed for \eqref{eq}.

\vskip 0.1in
Before stating our main results, we introduce a few more notations. Let $s_+=\max\{s,0\}$, $s_-=\min\{s,0\}$ for $s\in\R$.
The Aubin-Talenti constant \cite{Aubin} is denoted by $S$, that is, $S$ is the best constant in the Sobolev embedding $D^{1,2}(\R^N)\hookrightarrow L^{2^*}(\R^N)$. Let $C_{N,s}$ be the best constant in the Gagliardo-Nirenberg inequality
$$\|u\|_s^s\le C_{N,s}\|u\|_2^{\frac{2s-N(s-2)}{2}}\|\nabla u\|_2^{\frac{N(s-2)}{2}}\quad \text{for } 2<s<2^*.$$
The functional $E_V$ defined in \eqref{eq:E_V} will be considered on $H^1(\R^N)$ as well as on the subspaces $H^1_0(\Om)$, $\Om\subset\R^N$ open. Our basic assumption on $V$ is:
\begin{itemize}
\item[$(V_0)$] $V\in C(\R^N)\cap L^{\frac{N}{2}}(\R^N)$ is bounded and $\|V_-\|_{{\frac{N}{2}}}< S$.
\end{itemize} 
For some results we require that $V$ is $C^1$ and consider the function
\[
  \ti V:\R^N\to\R,\quad \ti{V}(x)=\nabla V(x)\cdot x.
\]
For $\Om\subset \R^N$ and $r>0$ we set
$$\Om_r=\left\{rx\in \R^N:x\in\Om\right\}$$
and
\[
 S_{r,\al} := S_\al \cap H^1_0(\Om_r) = \big\{u\in H^1_0(\Om_r): \|u\|_{L^2(\Om_r)}^2=\al\big\}.
\]
From now on we assume that $\Om\subset\R^N$ is a bounded smooth convex domain with $0\in\Om$. In our first result we consider the case $\be\le0$.

\begin{thm}\label{boundexist}
Assume $\be\le 0$, $V$ satisfies $(V_0)$, is of class $C^1$ and $\ti V$ is bounded. Then the following hold:
\begin{itemize}
\item[\rm(i)] For every $\al>0$ there exists  $r_{\al}>0$ such that \eqref{eq} on $\Om_r$ with $r>r_\al$ has a {mountain pass type} solution $(\la_{r,\al},u_{r,\al})$ with  $u_{r,\al}>0$ in $\Om_r$ and positive energy $E_V(u_{r,\al})>0$. Moreover, there exists $C_\al>0$  such that
$$\limsup_{r\to\infty}\max_{x\in \Om_r}u_{r,\al}(x)<C_\al.$$
\item[\rm (ii)] If {in addition}  $\|\ti V_+\|_{\frac{N}{2}}< 2S$ then there exists $\ti\al>0$ such that
$$\liminf_{r\to\infty}\la_{r,\al} >0\quad\text{for any \ } 0<\al<\ti \al.$$
\end{itemize}
\end{thm}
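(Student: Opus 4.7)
The plan is to construct the critical point as a constrained mountain pass for $E_V$ on $S_{r,\alpha}$, exploiting compactness of the embedding $H^1_0(\Om_r)\hookrightarrow L^s(\Om_r)$ for $2\le s<2^*$, and then to derive Pohozaev-type identities yielding the uniform $L^\infty$-bound and the lower bound on $\lambda_{r,\alpha}$. The setup is reminiscent of Soave's analysis on $\R^N$, but the fact that the dilation $(s*u)(x)=e^{sN/2}u(e^s x)$ preserves $H^1_0(\Om_r)$ only for $s\ge 0$ forces a one-sided version of Jeanjean's device.

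For the mountain pass geometry, hypothesis $(V_0)$ makes $u\mapsto\int(|\nabla u|^2+Vu^2)$ comparable to $\|\nabla u\|_2^2$. Combined with Gagliardo--Nirenberg (whose exponent on $\|\nabla u\|_2$ for controlling $\|u\|_q^q$ exceeds $2$ because $q$ is mass-supercritical) and $\beta\le 0$ (so the $p$-contribution $-\frac{\beta}{p}\|u\|_p^p$ is non-negative), one obtains constants $\rho,c_0>0$ independent of $r$ such that $E_V(u)\ge c_0$ on $\{u\in S_{r,\alpha}:\|\nabla u\|_2=\rho\}$. Fixing $\bar u\in S_{r_0,\alpha}$ with $\|\nabla\bar u\|_2>\rho$, the scalings $s*\bar u$ lie in $H^1_0(\Om_r)$ for every $r\ge r_0$ and $s\ge 0$, and $E_V(s*\bar u)\to-\infty$ as $s\to\infty$, yielding a mountain pass geometry with minimax level $c_r$ bounded above uniformly in $r$. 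A one-sided Jeanjean device on the augmented functional $(u,s)\mapsto E_V(s*u)$ over $S_{r,\alpha}\times[0,\infty)$ produces a Palais--Smale sequence $u_n$ at level $c_r$ satisfying $P(u_n)\to 0$, where
\[
P(u)=\|\nabla u\|_2^2-\tfrac12\int\tilde V u^2-\tfrac{N(q-2)}{2q}\|u\|_q^q-\tfrac{N(p-2)\beta}{2p}\|u\|_p^p.
\]
Subtracting the Pohozaev relation from twice the energy relation isolates $\|u_n\|_q^q$ with positive coefficient (because $q>2+4/N$) while the corresponding $\|u_n\|_p^p$-coefficient is non-negative (since $\beta\le 0$ and $p<2+4/N$); boundedness of $V,\tilde V$ then forces $\|u_n\|_q^q$, and hence via $P(u_n)\to 0$ also $\|\nabla u_n\|_2$, to be uniformly bounded. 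Compactness of the embedding delivers a critical point $u_{r,\alpha}$ with Lagrange multiplier $\lambda_{r,\alpha}$. Running the minimax over the non-negative cone (allowed because $E_V$ and $S_{r,\alpha}$ are invariant under $u\mapsto|u|$) and applying the strong maximum principle gives $u_{r,\alpha}>0$; a Brezis--Kato/Moser iteration applied to the equation, starting from the uniform $H^1$-bound provided by the uniform upper bound on $c_r$, produces $\limsup_r\|u_{r,\alpha}\|_\infty<C_\alpha$.

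For part (ii) I would use the boundary Pohozaev identity on $\Om_r$,
\[
\tfrac{N-2}{2}\|\nabla u_{r,\alpha}\|_2^2+\tfrac{N}{2}\int Vu_{r,\alpha}^2+\tfrac12\int\tilde V u_{r,\alpha}^2+\tfrac{N\lambda_{r,\alpha}\alpha}{2}+\tfrac12\int_{\partial\Om_r}(x\cdot\nu)(\partial_\nu u_{r,\alpha})^2\,dS=\tfrac{N}{q}\|u_{r,\alpha}\|_q^q+\tfrac{N\beta}{p}\|u_{r,\alpha}\|_p^p,
\]
whose boundary term is non-negative by convexity of $\Om$ and is $o(1)$ as $r\to\infty$ by the uniform $L^\infty$-bound together with elliptic boundary regularity (since $\partial_\nu u_{r,\alpha}$ decays near $\partial\Om_r$ where the mass is absent). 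Extracting a subsequence with $u_{r,\alpha}\rightharpoonup u^*\in H^1(\R^N)$ and $\lambda_{r,\alpha}\to\lambda_\infty$, the limit satisfies the $\R^N$-Pohozaev $P(u^*)=0$; eliminating $\|u^*\|_q^q$ between this and the limiting equation and using $\beta\le 0$, $p<q$ yields
\[
\lambda_\infty\alpha=\tfrac{2N-(N-2)q}{N(q-2)}\|\nabla u^*\|_2^2-\int Vu^{*2}-\tfrac{q}{N(q-2)}\int\tilde V u^{*2}+\tfrac{2\beta(q-p)}{p(q-2)}\|u^*\|_p^p.
\]
The hypothesis $\|\tilde V_+\|_{N/2}<2S$ makes $P(u^*)=0$ non-degenerate in the sense that $\|\nabla u^*\|_2^2$ is controlled by (rather than absorbed into) $\|u^*\|_q^q$; combined with Gagliardo--Nirenberg applied to $\|u^*\|_p^p$ (producing a factor $\alpha^\theta\to 0$ times a sub-quadratic power of $\|\nabla u^*\|_2$), and with the concentration of $u^*$ at scale $\sim\alpha^{-\mathrm{const}}$ which makes $\int Vu^{*2},\int\tilde V u^{*2}=o(\|\nabla u^*\|_2^2)$ as $\alpha\to 0$, the identity above gives $\lambda_\infty\alpha\ge c\|\nabla u^*\|_2^2>0$ for all $\alpha<\tilde\alpha$, hence $\liminf_{r\to\infty}\lambda_{r,\alpha}>0$.

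The main obstacles are: (a) adapting Jeanjean's stretching to the one-sided setting $s\ge 0$ while still extracting $P(u_n)\to 0$ from the minimax; (b) establishing that the boundary Pohozaev term vanishes in the limit $r\to\infty$, which requires combining the uniform $L^\infty$-bound with elliptic decay estimates for $u_{r,\alpha}$ near $\partial\Om_r$; and (c) making precise the concentration argument that controls the $V$- and $\tilde V$-integrals as $o(\|\nabla u^*\|_2^2)$ when $\alpha\to 0$, so that the arithmetic coefficient matched to the threshold $\|\tilde V_+\|_{N/2}<2S$ produces a strict positive bound on $\lambda_\infty$.
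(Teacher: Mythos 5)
Your overall architecture (constrained mountain pass, Pohozaev-type bookkeeping, small-$\alpha$ analysis for the sign of $\lambda$) is in the right spirit, but two of the steps you yourself flag as ``obstacles'' are genuine gaps, and they sit exactly where the real work is. First, the existence mechanism: with the dilation restricted to $s\ge 0$ (the only range preserving $H^1_0(\Omega_r)$), a min--max Palais--Smale sequence for the augmented functional $(u,s)\mapsto E_V(s*u)$ on $S_{r,\alpha}\times[0,\infty)$ need not satisfy $P(u_n)\to 0$: if the relevant parameter sits on the boundary $s=0$ you only obtain one-sided information on $\partial_s E_V(s*u_n)$, and note that your own boundedness argument needs the inequality $P(u_n)\ge -o(1)$ (an upper bound on $-P$, so that $2E_V(u_n)-P(u_n)$ controls $\|u_n\|_q^q$ from above), which is precisely the side you cannot guarantee. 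You do not explain how to recover this, so the key compactness input of part (i) is missing. The paper avoids dilations on the bounded domain altogether: it uses the monotonicity trick, i.e.\ the family $E_{r,s}$ with $s\in[\frac12,1]$ multiplying the $|u|^q$-term, obtains bounded Palais--Smale sequences for a.e.\ $s$ via the abstract result of Borthwick--Chang--Jeanjean--Soave, solves those problems by compactness of the embedding, proves a uniform a priori gradient bound for the solutions through the boundary Pohozaev identity whose boundary term $\int_{\partial\Omega_r}|\nabla u|^2(x\cdot\mathbf{n})\,d\sigma$ has a favorable sign by convexity, and then lets $s\to 1$. (Your Moser/Brezis--Kato route to the uniform $L^\infty$-bound, by contrast, is a legitimate alternative to the paper's blow-up-plus-Liouville argument, since $q<2^*$ makes the iteration constants domain-independent for zero-extended nonnegative subsolutions.)

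Second, in part (ii) your use of the boundary Pohozaev identity is circular: the boundary term $\frac12\int_{\partial\Omega_r}(x\cdot\nu)(\partial_\nu u_{r,\alpha})^2\,dS$ enters with the sign that only yields an \emph{upper} bound on $\lambda_{r,\alpha}$ if you drop it, so you must show it is $o(1)$, and your justification (``$\partial_\nu u_{r,\alpha}$ decays near $\partial\Omega_r$ where the mass is absent'') assumes exactly what has to be proved, namely that no mass of $u_{r,\alpha}$ concentrates at bounded distance from $\partial\Omega_r$; ruling this out is nontrivial and in the paper requires translating to the concentration points and invoking the Liouville theorem on a half-space. You also do not address the possibility that the weak limit $u^*$ vanishes (mass escaping to infinity), which the paper treats as a separate case by the same translation argument, nor do you substantiate the ``concentration at scale $\alpha^{-\mathrm{const}}$'' claim used to make $\int Vu^{*2}$ and $\int\tilde V u^{*2}$ negligible. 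The paper's route is more elementary here: the hypothesis $\|\tilde V_+\|_{N/2}<2S$ is used via H\"older and Sobolev to keep the coefficient $\frac1N-\frac{\|\tilde V_+\|_{N/2}S^{-1}}{2N}$ positive in the combined identity, which yields the lower bound $\|\nabla u_\alpha\|_2^2\gtrsim \alpha^{-\frac{2N-q(N-2)}{N(q-2)-4}}$; the potential terms are then estimated crudely by $\|V\|_\infty\alpha$ and $\|\tilde V\|_\infty\alpha$, and the resulting identity forces $\lambda_\alpha>0$ once $\alpha$ is small. Until you (a) replace or repair the one-sided stretching argument and (b) supply the boundary-concentration exclusion and the vanishing-limit case, the proposal does not prove either part of the theorem.
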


\begin{remark}\rm
a) In the proof of Lemma~\ref{moun} we will define $r_\al$ explicitly. From the definition it follows that $r_\al\to\infty$ as $\be\to0$ or as $\al\to\infty$, and that $r_\al\to0$ if $\be\to\infty$ and $\al$ is bounded, or $\al\to0$ and $\be$ is bounded.

b) The solution $(\la_{r,\al},u_{r,\al})$ being of mountain pass type means that $u_{r,\al}$ is a critical point of the constrained functional $E_V|_{S_{r,\al}}$ of mountain pass type with $\la_{r,\al}$ the corresponding Lagrange multiplier. This applies also to other types of critical points obtained in the theorems below.

c) If one is only interested in the existence of solutions for a fixed $r>r_{\al}>0$, then the assumptions on $V$ in Theorem \ref{boundexist} can be weakened to $V\in C^1( \Om_r)$, $V$ and $\ti V$ are bounded in $\Om_r$, and $\|V_-\|_{L^\frac{N}{2}(\Om_r)}< S$. This also applies to Theorems~\ref{boundexi} and \ref{multiplicity} below.
\end{remark}

Next we consider problem \eqref{eq} with $\be>0$, and obtain the existence of solutions of local minimum type and of mountain pass type.

\begin{thm}\label{boundexi}
Assume $\be>0$, $V$ satisfies $(V_0)$, and set
\begin{equation*}
\begin{aligned}
\al_V=&\left(\frac{1-\|V_-\|_{\frac{N}{2}}S^{-1}}{2N(q-p)}\right)^{\frac{N}{2}}\left(\frac{q(4-N(p-2))}{C_{N,q}}\right)^{\frac{4-N(p-2)}{2(q-p)}}
\left(\frac{p(N(q-2)-4)}{\be C_{N,p}}\right)^{\frac{N(q-2)-4}{2(q-p)}}.
\end{aligned}
\end{equation*}
Then the following hold for $0<\al<\al_V$:
\begin{itemize}
\item [\rm(i)] There exists $r_\al>0$ such that \eqref{eq} on $\Om_r$ with $r>r_\al$ has {a local minimum type} solution $(\la_{r,\al},u_{r,\al})$ with  $u_{r,\al}>0$   in $\Om_r$ {and negative energy} $E_V(u_{r,\al})<0$.
 \item [\rm(ii)] There exists $C_\al>0$ such that
$$\limsup_{r\to\infty}\max_{x\in \Om_r}u_{r,\al}(x)<C_\al,\quad\liminf_{r\to\infty}\la_{r,\al}>0.$$
\end{itemize}
\end{thm}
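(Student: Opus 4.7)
I would obtain $u_{r,\al}$ as a constrained local minimizer of $E_V$ inside a ``potential well'' on $S_{r,\al}$, and then transfer bounds across the family as $r\to\infty$.

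The first step is to analyze the shape of $E_V$ on $S_{r,\al}$. Using $(V_0)$ together with H\"older-Sobolev one has $\int_{\Om_r}Vu^2\ge -\|V_-\|_{\frac{N}{2}}S^{-1}\|\nabla u\|_2^2$; the Gagliardo-Nirenberg bounds on $\|u\|_p^p$ and $\|u\|_q^q$ then give a scalar lower bound $E_V(u)\ge g(\|\nabla u\|_2)$, with
\[
  g(t)=\tfrac{1}{2}\bigl(1-\|V_-\|_{\frac{N}{2}}S^{-1}\bigr)t^2-\tfrac{\be C_{N,p}}{p}\al^{\ga_p}t^{N(p-2)/2}-\tfrac{C_{N,q}}{q}\al^{\ga_q}t^{N(q-2)/2}
\]
for explicit exponents $\ga_p,\ga_q$. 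Since $N(p-2)/2<2<N(q-2)/2$, the function $g$ is negative near $0^+$ (the subcritical term with $\be>0$ dominates the diffusion), rises to an interior local maximum, and is eventually pushed to $-\infty$ by the $q$-term. A direct optimization shows that $\al<\al_V$ is exactly the condition that makes this local maximum strictly positive, so there exist $0<t_0<t_1$ with $g(t_0)=g(t_1)=0$ and $g>0$ on $(t_0,t_1)$.

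Next I would minimize $E_V$ on $B_{r,\al}:=\{u\in S_{r,\al}:\|\nabla u\|_2\le t_1\}$. For $r>r_\al$ large enough this set contains a test function $w$ with $\|\nabla w\|_2<t_0$ and $E_V(w)<0$ (take a rescaled $L^2$-normalized bump supported in $\Om_r$; the threshold $r_\al$ is precisely what makes such a spread-out $w$ fit inside $\Om_r$). Since $\Om_r$ is bounded, $H^1_0(\Om_r)\hookrightarrow L^s(\Om_r)$ is compact for all $s\in[2,2^*)$, so the mass constraint, the $V$-term and the two nonlinear integrals are weakly sequentially continuous. Combined with weak lower semicontinuity of $\|\nabla u\|_2^2$, this yields a minimizer $u_{r,\al}$ of $E_V$ on $B_{r,\al}$ with $E_V(u_{r,\al})<0$. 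Because $E_V\ge g(t_1)>0$ on $\{\|\nabla u\|_2=t_1\}$, the minimizer lies in the interior of $B_{r,\al}$ and is a genuine constrained critical point on $S_{r,\al}$, yielding a Lagrange multiplier $\la_{r,\al}$. Replacing $u_{r,\al}$ by $|u_{r,\al}|$ preserves the minimum, and elliptic regularity plus the strong maximum principle upgrade it to $u_{r,\al}>0$ in $\Om_r$.

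For (ii), the uniform gradient bound $\|\nabla u_{r,\al}\|_2\le t_1$, coupled with a Brezis-Kato / Moser iteration on the equation (using boundedness of $V$ and $q<2^*$), gives $\|u_{r,\al}\|_{L^\infty(\Om_r)}\le C_\al$ independent of $r$. For $\liminf_{r\to\infty}\la_{r,\al}>0$ I would argue by contradiction: if $\la_{r_n,\al}\to\bar\la\le 0$ along a subsequence, the uniform $H^1$ and $L^\infty$ bounds, combined with a concentration-compactness alternative adapted to the presence of $V$, extract a nontrivial weak limit $u_\infty\in H^1(\R^N)$ satisfying $-\De u_\infty+Vu_\infty+\bar\la u_\infty=|u_\infty|^{q-2}u_\infty+\be|u_\infty|^{p-2}u_\infty$; testing this against $u_\infty$ and re-invoking the same Gagliardo-Nirenberg algebra that produced $g$ in Step~1 forces $\bar\la>0$, a contradiction. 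The principal obstacle is precisely this last point: non-constant $V$ breaks translation invariance, so Lions-type recentering is unavailable, and one must rule out mass vanishing or escape to infinity using the decay of $V$, the convexity of $\Om$, and the small-gradient character of the potential-well minimizer.
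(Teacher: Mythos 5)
Your part (i) follows essentially the paper's route: the same Gagliardo--Nirenberg/H\"older reduction to a scalar function of $\|\nabla u\|_2$, the same role of $\al<\al_V$, a rescaled bump (the paper uses the rescaled first Dirichlet eigenfunction) to make the sublevel set nonempty with negative energy for $r$ large, and minimization on a gradient ball; the paper uses Ekeland plus strong convergence, while your direct method on the weakly closed set $\mathcal B_{r,\al}$ works equally well on a bounded domain. For the uniform $L^\infty$ bound your Brezis--Kato/Moser iteration is a genuine alternative to the paper's blow-up-plus-Liouville argument (Lemma \ref{blow}); it is viable provided you check that the constants are independent of $r$ (they involve only the $\R^N$ Sobolev constant via extension by zero, $\|V\|_\infty$, a bound on $|\la_{r,\al}|$ obtained by testing the equation with $u_{r,\al}$, and the uniform smallness of $\|u_{r,\al}^{q-2}\chi_{\{u_{r,\al}>K\}}\|_{N/2}$ coming from the uniform $H^1$ bound and $q<2^*$). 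One slip: with your normalization $g(t_0)=g(t_1)=0$, the assertion ``$E_V\ge g(t_1)>0$ on $\{\|\nabla u\|_2=t_1\}$'' is false as written; either place the boundary at the interior maximizer of $g$ (the paper's $T_\al$) or simply use $E_V\ge g(t_1)=0>e_{r,\al}$ to get interiority.

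The genuine gap is the second half of (ii), $\liminf_{r\to\infty}\la_{r,\al}>0$. Your plan (contradiction, then a concentration-compactness extraction of a nontrivial limit of $u_{r_n,\al}$ as $r_n\to\infty$) is incomplete: you must exclude vanishing and escape of mass, which you yourself label ``the principal obstacle'' without resolving it, and even granting a nontrivial weak limit $u_\infty$, merely testing the limit equation against $u_\infty$ does not force $\bar\la>0$, since the weak limit may carry only part of the mass and, with $V$ present, neither the Nehari identity nor Pohozaev yields a sign without further input. The paper needs none of this machinery here: from the Lagrange relation,
\begin{equation*}
\la_{r,\al}\,\al=\frac{(p-2)\be}{p}\int_{\Om_r}|u_{r,\al}|^p\,dx+\frac{q-2}{q}\int_{\Om_r}|u_{r,\al}|^q\,dx-2E_r(u_{r,\al})>-2e_{r,\al},
\end{equation*}
using $\be>0$ and $p,q>2$; and since $H^1_0(\Om_r)\subset H^1_0(\Om_{r'})$ for $r<r'$ with the same gradient constraint, $r\mapsto e_{r,\al}$ is nonincreasing, so $e_{r,\al}\le e_{r_\al,\al}<0$ for all $r>r_\al$ and hence $\liminf_{r\to\infty}\la_{r,\al}\ge -2e_{r_\al,\al}/\al>0$, uniformly in $r$, with no limiting equation or compactness analysis at all (that analysis is only needed later, in Section 3, to pass to $\R^N$). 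You should replace your multiplier step by this identity-plus-monotonicity argument, or else supply the missing compactness proof, which is substantially harder than what the statement requires.
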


\begin{thm}\label{multiplicity}
Assume $\be>0$, $V$ satisfies $(V_0)$, is of class $C^1$ and $\ti V$ is bounded. Set
\begin{equation*}
\ti \al_V=\left(\frac{2(1-\|V_-\|_{\frac{N}{2}}S^{-1})}{N(q-2)}\right)^{\frac{N}{2}}
\left(\frac{C_{N,q}}{q}A_{p,q}+\frac{C_{N,q}}{q}\right)^{-\frac{N}{2}}\left(\frac{p C_{N,q}}
{\be q C_{N,p} }A_{p,q}\right)^{\frac{N(q-2)-4}{2(q-p)}},
\end{equation*}
where
$$A_{p,q}=\frac{(q-2)(N(q-2)-4)}{(p-2)(4-N(p-2))}.$$
Then the following hold for $0<\al<\ti\al_V$:
 \begin{itemize}
\item [\rm(i)]There exists $\ti r_\al>0$ such that \eqref{eq} in $\Om_r$ admits for $r>r_\al$ a {mountain pass type} solution $(\la_{r,\al},u_{r,\al})$ with $u_{r,\al}>0$ in $\Om_r$ {and positive energy} $E_V(u_{r,\al})>0$. Moreover, there exists $C_\al>0$  such that
$$\limsup_{r\to\infty}\max_{x\in \Om_r}u_{r,\al}(x)<C_\al.$$
\item [\rm(ii)] There exists $0<\bar \al\le \ti a_V$ such that
$$\quad\liminf_{r\to\infty}\la_{r,\al}>0 \quad\text{for any}\ 0<\al\le \bar \al.$$
\end{itemize}
\end{thm}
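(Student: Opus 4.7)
The plan is to build a mountain pass scheme for $E_V$ on $S_{r,\al}$, extract a bounded Palais--Smale sequence via Jeanjean's monotonicity trick combined with the Pohozaev identity (whose boundary term has a favorable sign by convexity), pass to a strong limit using the compact Sobolev embedding on $\Om_r$, and finally derive $r$-uniform estimates.

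For the mountain pass geometry, the Gagliardo--Nirenberg inequality together with $(V_0)$ yields on $u\in S_{r,\al}$ an estimate
\[
 E_V(u)\ge \tfrac12\bigl(1-\|V_-\|_{N/2}S^{-1}\bigr)\|\nabla u\|_2^2 - \tfrac{C_{N,q}}{q}\al^{\frac{2q-N(q-2)}{4}}\|\nabla u\|_2^{\frac{N(q-2)}{2}} - \tfrac{\be C_{N,p}}{p}\al^{\frac{2p-N(p-2)}{4}}\|\nabla u\|_2^{\frac{N(p-2)}{2}},
\]
whose right-hand side, read as a function of $s=\|\nabla u\|_2$, has a strictly positive local maximum exactly when $\al<\ti\al_V$; this is how $\ti\al_V$ and the ratio $A_{p,q}$ are calibrated. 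A concentrated competitor $\phi_n(x)=n^{N/2}\phi(nx)$, with $\phi\in C_c^\infty(\Om)$ rescaled so that $\phi_n\in S_{r,\al}$ and supported in $\Om_r$ for $n$ large, has $E_V(\phi_n)\to-\infty$ because $N(q-2)/2>2$. This furnishes two valleys separated by a positive barrier, so that the minimax level $c_r$ is positive and, by a standard path construction, uniformly bounded above in $r$.

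To obtain a bounded PS sequence I would apply Jeanjean's monotonicity trick to the perturbed family
\[
 E_V^\si(u) = \tfrac12\|\nabla u\|_2^2+\tfrac12\!\int_{\Om_r}\!\! Vu^2-\si\bigl(\tfrac1q\|u\|_q^q+\tfrac{\be}{p}\|u\|_p^p\bigr),\quad \si\in[\tfrac12,1],
\]
producing for a.e.\ $\si$ a PS sequence $(u_n^\si)$ at the associated minimax level which also approximately satisfies the Pohozaev identity. Because $\Om_r$ is convex and $0\in\Om$, the boundary term $-\tfrac12\int_{\pa\Om_r}|\nabla u|^2(x\cdot\nu)\,dS$ is nonpositive; coupling this with $\langle (E_V^\si)'(u_n^\si),u_n^\si\rangle=o(1)$ and the $L^\infty$-bound on $\ti V$ forces a uniform $H^1_0$-bound. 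A diagonal passage $\si\to 1$ then gives a bounded PS sequence for $E_V|_{S_{r,\al}}$ at level $c_r$. Compactness of $H^1_0(\Om_r)\hookrightarrow L^s(\Om_r)$ for $s<2^*$ and the $(V_0)$-coercivity of the quadratic form upgrade this to strong $H^1_0$-convergence, with bounded Lagrange multipliers, delivering a critical point $u_{r,\al}$; taking the minimax path through $|u|$ and applying the strong maximum principle yields $u_{r,\al}>0$.

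For the $r\to\infty$ asymptotics in (i), a Brezis--Kato / Moser iteration based on the uniform $H^1_0$-bound and $V\in L^{N/2}$ yields the claimed bound $\|u_{r,\al}\|_\infty\le C_\al$. For (ii), combining the Euler--Lagrange equation with the Pohozaev identity eliminates $\|\nabla u\|_2^2$ and produces a relation of the form
\[
 \la_{r,\al}\,\al = c_1\|u_{r,\al}\|_q^q + c_2\be\|u_{r,\al}\|_p^p - (\text{terms controlled by }V,\,\ti V),
\]
with $c_1,c_2>0$; since the mountain pass value provides an $\al$-dependent lower bound on $\|\nabla u_{r,\al}\|_2$ and thus on the nonlinear quantities, one picks $\bar\al>0$ small enough that the potential terms are absorbed, yielding the uniform positive lower bound on $\la_{r,\al}$. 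I expect the main obstacle to be the boundedness of the PS sequence: $V$ destroys the scaling invariance, so a direct fiber argument on $\R^N$ is unavailable, and the convexity-induced sign of the Pohozaev boundary term together with the $\si$-monotonicity must substitute for it. A secondary subtlety lies in tracking the constants in the Pohozaev/EL combination carefully enough to preserve $\bar\al>0$ as $r\to\infty$.
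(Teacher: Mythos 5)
Your scheme for existence is essentially the paper's: a mountain pass structure on $S_{r,\al}$ calibrated by the Gagliardo--Nirenberg inequality and $\ti\al_V$, Jeanjean-type monotonicity trick in $\si\in[\frac12,1]$, compactness of $H^1_0(\Om_r)\hookrightarrow L^s(\Om_r)$, the $|u|$-path trick plus the maximum principle for positivity, and the Pohozaev identity with the convexity sign $x\cdot\mathbf{n}\ge0$ together with the bound on $\ti V$ to get estimates uniform in $\si$ and $r$. One caveat on that step: on a bounded domain there is no dilation acting on $H^1_0(\Om_r)$, so a Palais--Smale sequence does not ``approximately satisfy the Pohozaev identity'' (that identity, boundary term included, only makes sense for actual solutions with boundary regularity). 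The correct order, which is what the paper does via the Borthwick--Chang--Jeanjean--Soave theorem, is: for a.e.\ $\si$ the abstract result already gives a \emph{bounded} PS sequence, compactness upgrades it to a solution $u_{r,\si}$, and only then is the exact Pohozaev identity applied to $u_{r,\si}$ (Lemma~\ref{priori1}) to get the uniform bound needed for the diagonal passage $\si\to1$. Your Brezis--Kato/Moser route to the uniform $L^\infty$ bound is a genuinely different, and viable, alternative to the paper's blow-up-plus-Liouville argument (Lemma~\ref{blow}), since $q<2^*$ and the $H^1$, $\la_r$ and $V$ bounds are uniform in $r$; it avoids the delicate exterior-ball barrier near $\pa\Om_r$.

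The genuine gap is part (ii). Eliminating $\|\nabla u\|_2^2$ between the Euler--Lagrange relation and the Pohozaev identity on $\Om_r$ gives
\begin{equation*}
\frac{\la_{r,\al}\,\al}{N}=\frac{2N-q(N-2)}{2Nq}\int_{\Om_r}|u_{r,\al}|^q dx+\be\,\frac{2N-p(N-2)}{2Np}\int_{\Om_r}|u_{r,\al}|^p dx-\frac{1}{2N}\int_{\pa\Om_r}|\nabla u_{r,\al}|^2(x\cdot\mathbf{n})\,d\sigma-\frac{1}{2N}\int_{\Om_r}\ti V u_{r,\al}^2 dx-\frac{1}{N}\int_{\Om_r}V u_{r,\al}^2 dx,
\end{equation*}
so besides the $O(\al)$ potential terms you must subtract the boundary integral, which by convexity is \emph{nonnegative}, i.e.\ it enters with the unfavorable sign here (the convexity sign helps the a priori bound, not the positivity of $\la$), and nothing in your argument controls it from above uniformly in $r$; ``absorbing'' it by taking $\al$ small is not possible since it is not small relative to $\al$ a priori. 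This is exactly why the paper does not prove $\la_{r,\al}>0$ for fixed $r$ but only $\liminf_{r\to\infty}\la_{r,\al}>0$, via a contradiction argument in the limit $r\to\infty$ (Theorem~\ref{existbound1}(ii)): assuming $\la_{\al_n}\le0$ along $\al_n\to0$, one first bounds $\la$ from below by testing with the principal Dirichlet eigenfunction, then uses concentration-compactness and the decay of $V$ in $L^{N/2}$ to extract nontrivial limit profiles solving the autonomous equation on $\R^N$ (half-space limits being excluded by Liouville theorems), and applies the \emph{boundary-free} Pohozaev identity on $\R^N$, combined with the blow-up of the mountain pass level (hence of $\|\nabla u\|_2$) as $\al\to0$, to reach a contradiction. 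Some substitute for this limit-profile analysis (or an independent proof that the boundary term vanishes as $r\to\infty$) is needed; as written, your derivation of $\liminf_{r\to\infty}\la_{r,\al}>0$ does not go through.
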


\begin{remark}\rm
a) $r_\al$ and $\ti r_\al$ in Theorems \ref{boundexi} and \ref{multiplicity} are defined explicitly; see the proof of Theorem~\ref{2.25}, in particular \eqref{2.12}, and the proof of Lemma~\ref{moun1}, in particular \eqref{2.41}.

b) The methods developed in Theorems \ref{boundexist}, \ref{boundexi} and \ref{multiplicity} can also be used to find normalized solutions for other elliptic partial differential equations in large bounded smooth convex domains. We conjecture that the convexity of $\Om$ is not essential for our results.
\end{remark}

Now we obtain the existence and multiplicity of normalized solutions in $\R^N$ by taking $\Om=B_1$, the unit ball centered at the origin in $\R^N$,  and analyzing the compactness of the solutions $u_{r,\al}$ established in Theorems \ref{boundexist}, \ref{boundexi} and \ref{multiplicity} as $r$ tends to infinity. For the passage $r\to\infty$ we require the following condition on $V$.
\begin{itemize}
\item[$(V_1)$] $V$ is of class $C^1$, $\lim\limits_{|x|\to\infty}V(x)=0$, and there exists $\rho\in (0,1)$ such that
\begin{equation}\label{condition}
\liminf_{|x|\to\infty}\inf_{y\in B(x,\rho|x|)}(x\cdot \nabla V(y))e^{\tau |x|}>0\quad\text{for any }\tau>0.
\end{equation}
\end{itemize} 


\begin{thm}\label{exis}
Assume $\be>0$ and $V$ satisfies $(V_0)$-$(V_1)$. Then problem \eqref{eq} with $\Om=\R^N$ admits for any $0<\al< \al_V$, where $\al_V$ is as in Theorem~\ref{boundexi}, a solution $(\la_\al, u_\al)$ with $u_\al>0$, $\la_{\al}>0$, and $E_V(u_\al)<0$.
\end{thm}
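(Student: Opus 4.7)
The plan is to obtain $u_\al$ by passing to the limit $r\to\infty$ in the local-minimum-type family $(\la_{r,\al},u_{r,\al})$ furnished by Theorem~\ref{boundexi} applied with $\Om=B_1$ (so $\Om_r=B_r$). Fix $0<\al<\al_V$ and extend each $u_{r,\al}$ by zero to all of $\R^N$, viewing them as elements of $H^1(\R^N)$ with mass $\al$. Theorem~\ref{boundexi} already gives the uniform $L^\infty$ bound $\max u_{r,\al}<C_\al$ and the lower bound $\la_{r,\al}\ge\la_0>0$ for all $r$ large. Combined with $E_V(u_{r,\al})<0$ and assumption $(V_0)$ (which controls $\int V_-u^2$ by $\|V_-\|_{\frac{N}{2}}S^{-1}\|\nabla u\|_2^2$), these produce a uniform $H^1(\R^N)$ bound on the family.

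The key analytic step is a uniform exponential decay estimate. By $(V_1)$, $V(x)\to 0$ as $|x|\to\infty$; together with the uniform $L^\infty\cap L^2$ bound this shows, via standard elliptic estimates, that $u_{r,\al}$ is pointwise small outside a fixed ball $B_{R_0}$ independent of $r$. Hence the nonlinear contribution $(|u_{r,\al}|^{q-2}+\be|u_{r,\al}|^{p-2})u_{r,\al}$ can be absorbed into the linear term $(V+\la_{r,\al})u_{r,\al}$ on $\R^N\setminus B_{R_0}$, leaving a differential inequality of the form $-\De u_{r,\al}+\tfrac{\la_0}{4}u_{r,\al}\le 0$ there. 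Comparison with a radial exponential barrier then yields a pointwise bound $u_{r,\al}(x)\le Ce^{-c|x|}$ for $|x|\ge R_0$, with $C,c>0$ independent of $r$.

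With the uniform decay in hand, compactness follows. Extract a sequence $r_n\to\infty$ along which $\la_{r_n,\al}\to\la_\al\ge\la_0>0$ and $u_{r_n,\al}\rightharpoonup u_\al$ in $H^1(\R^N)$, strongly in $L^s_{\rm loc}$ for $s\in[1,2^*)$ and a.e. The decay provides uniform tail control, upgrading convergence to strong in $L^s(\R^N)$ for $s\in[2,2^*)$; in particular $\|u_\al\|_2^2=\al$, so $u_\al\not\equiv 0$. The potential and nonlinear terms pass to the limit, yielding the Schr\"odinger equation on $\R^N$. Testing against $u_{r_n,\al}-u_\al$ improves convergence to strong in $H^1(\R^N)$, and the strong maximum principle gives $u_\al>0$. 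Strict negativity $E_V(u_\al)<0$ is read off from the fact that, for $\al<\al_V$, the local-minimum level constructed in Theorem~\ref{boundexi} is bounded above by a negative constant uniformly in $r$, thanks to the mass-subcritical term $\be|u|^p$ with $\be>0$ creating a uniform negative well in the fibered functional.

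I expect the main obstacle to be making the uniform exponential decay genuinely independent of $r$, and in particular ruling out ``travelling bump'' configurations in which the maxima of $u_{r_n,\al}$ escape to infinity (compatible with an $L^\infty$ bound but fatal for weak $H^1$-convergence of the mass). This is precisely the role of $(V_1)$: the quantitative non-vanishing of $\nabla V(y)\cdot x$ in wide cones around far-away points $x$, together with a Pohozaev-type identity on $B_{r_n}$ whose boundary contribution vanishes thanks to the decay of Step~2, excludes such travelling configurations. Verifying that the resulting estimates mesh with the $H^1$ and $L^\infty$ bounds of Step~1 is the delicate technical heart of the argument.
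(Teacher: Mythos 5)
There is a genuine gap, and it sits exactly where you locate the ``delicate technical heart'': your Step~2 asserts that the uniform $L^\infty\cap L^2\cap H^1$ bounds together with $V(x)\to0$ yield, ``via standard elliptic estimates,'' pointwise smallness of $u_{r,\al}$ outside a \emph{fixed} ball $B_{R_0}$ independent of $r$, from which you derive a global uniform exponential decay $u_{r,\al}(x)\le Ce^{-c|x|}$. This is circular: uniform smallness outside a fixed ball is equivalent to tightness of the family, i.e.\ precisely the exclusion of the travelling-bump scenario you mention afterwards. A sequence with an order-one bump centred at points $z_r\to\infty$ satisfies all the bounds of your Step~1, so no local elliptic estimate can rule it out; and without Step~2 your Step~3 (strong $L^s(\R^N)$ convergence, hence $\|u_\al\|_2^2=\al$) collapses. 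You then invoke a Pohozaev identity ``whose boundary contribution vanishes thanks to the decay of Step~2'' to exclude travelling bumps, but that decay was itself obtained by assuming no mass escapes, so the argument as ordered does not close.

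The paper resolves this by a different intermediate device which your sketch is missing: a profile decomposition (Lemma~\ref{profile}) writing $u_r=u_0+\sum_{k=1}^l w^k(\cdot-z_r^k)+o(1)$ in $H^1$, with each nontrivial profile $w^k$ solving the autonomous equation \eqref{3.13} and enjoying a uniform lower $H^1$ bound (via the Pohozaev identity and Gagliardo--Nirenberg). Assuming $l>0$, one takes the nearest escaping centre $z_r^1$ and an annulus $A_r=B(z_r^1,\tfrac32\delta|z_r^1|)\setminus B(z_r^1,\tfrac12\delta|z_r^1|)$ kept away from all other bumps; the decomposition gives $\|u_r\|_{L^p(A_r)}\to0$, and only \emph{there} does an iterated cut-off/comparison argument produce exponential smallness $|u_r|+|\nabla u_r|\le Ae^{-c|z_r^1|}$ on a middle shell --- a decay in $|z_r^1|$ on an $r$-dependent region, not a global decay in $|x|$. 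Multiplying the equation by $z_r^1\cdot\nabla u_r$ and integrating over $B(z_r^1,\delta|z_r^1|)\cap B_r$ then gives \eqref{3.5}, where the boundary term $\mathcal A_1$ is exponentially small by the local decay, $\mathcal A_2\le0$ because $z_r^1\cdot\mathbf n\ge0$ on $\partial B_r$ (convexity), while \eqref{condition} in $(V_1)$ forces $e^{\tau_0|z_r^1|}\int (z_r^1\cdot\nabla V)u_r^2$ to stay positive since $\int_{B(z_r^1,\delta|z_r^1|)\cap B_r}u_r^2\ge\|w^1\|_2^2>0$; this is the contradiction that yields $l=0$ and strong $H^1(\R^N)$ convergence. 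So your list of ingredients (uniform bounds, positive $\liminf$ of $\la_r$, $(V_1)$ plus a local Pohozaev identity, negativity of the limiting energy from $e_{r,\al}\le e_{r_\al,\al}<0$) is the right one, but the proof needs the profile decomposition and the \emph{localized} decay estimate around the would-be escaping bump in place of your global decay claim; as written, the proposal does not constitute a proof.
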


{\begin{remark}\rm
a) The assumptions on $V$ in Theorem~\ref{exis} and Theorems~\ref{multipli}, \ref{exis} below hold, for instance, for $V(x)=-\frac{c}{1+|x|^\tau}$ with $\tau>2$ and $c$ small so that $\|V\|_{\frac{N}{2}}<S$. Observe that $(V_1)$ implies $V(x)\le0$ for $|x$ large.  Thus we cover similar potentials as in \cite{Ikoma-Miyamoto2020} but not those in \cite{Bartsch-Molle-Rizzi-Verzini2021} where it was assumed that $V\ge0$.

b) Clearly \eqref{condition} is only concerned with the behavior of $\nabla V$ as $|x|$ tends to infinity. It holds, for instance, if $V(x)\sim-|x|^{-\tau}$ for $|x|$ large with $\tau>2$. The condition $\tau>2$ is needed so that $\|V_-\|_{\frac{N}2}<S$ can be satisfied as required in $(V_0)$. More generally, if $V(x)$ is radial and radially increasing for $|x|$ large, if there are $c_0>0$, $r_1>0$ and $\tau_0>0$ such that
\begin{equation}\label{2-10}
 y\cdot\nabla V(y)\ge c_0|y|^{-\tau_0}\quad \text{for all } |y|\ge r_1,
\end{equation}
and there exist $c_2$, $r_2>0$ such that for any $y$ with $|y|\ge r_2,$
\begin{equation}\label{2-11}
\left|{\xi\cdot\nabla V(y)}\right|\le c_2\left(\frac{y\cdot\nabla V(y)}{|y|}\right)
\quad \text{for all }  \xi\in \R^N\ \text{with}\ \xi\cdot y=0\ \text{and}\ |\xi|=1,
\end{equation}
then \eqref{condition} holds. Indeed, for any $y\in B(x,\rho|x|)$ with $\rho\in(0,1)$ to be determined,
let $\xi=x-\frac{y\cdot x}{|y|^2}y$. Then
$$x=\frac{y\cdot x}{|y|^2}y+\xi\in (span\{y\})\oplus (span\{y\})^\bot.$$
From $y\in B(x,\rho |x|)$ we deduce
$$\frac{1-\rho}{(1+\rho)^2}|y|^2\le y\cdot x\le \frac{1+\rho}{(1-\rho)^2}|y|^2 \quad\text{and}\quad |\xi|\le \frac{\rho}{1-\rho}|y|.$$
In view of \eqref{2-11} we get for $y\in B(x,\rho|x|)$ with $|y|\ge r_2$:
\begin{equation}\label{2-12}
\begin{aligned}
x\cdot\nabla V(y)&=\frac{y\cdot x}{|y|^2}(y\cdot\nabla V(y))+|\xi|\left(\frac{\xi}{|\xi|}\cdot\nabla V(y)\right)\\
&\ge \frac{y\cdot x}{|y|^2}(y\cdot\nabla V(y))-c_2|\xi|\left(\frac{y}{|y|}\cdot\nabla V(y)\right)\\
&\ge\left(\frac{1-\rho}{(1+\rho)^2}-c_2\frac{\rho}{1-\rho}\right)(y\cdot\nabla V(y)).
\end{aligned}
\end{equation}
Note that
$$\frac{1-\rho}{(1+\rho)^2}-c_2\frac{\rho}{1-\rho}\to 1 \quad \text{as\ } \rho \to 0.$$
Hence, we can take $\rho$ small such that
\begin{equation}\label{2-13}
\frac{1-\rho}{(1+\rho)^2}-c_2\frac{\rho}{1-\rho}>\frac{1}{2}.
\end{equation}
Now \eqref{2-10}, \eqref{2-12} and \eqref{2-13} imply for $y\in B(x,\rho|x|)$ with $|y|\ge \max\{r_1, r_2\}$:
$$x\cdot \nabla V(y)\ge \frac{1}{2} (y\cdot\nabla V(y))\ge \frac{c_0}{2}|y|^{-\tau_0}\ge\frac{c_0(1+\rho)^{-\tau_0}}{2}|x|^{-\tau_0}.$$
Therefore \eqref{2-10} and \eqref{2-11} imply \eqref{condition}. Moreover, \eqref{2-11} holds if there exists $r_2>0$ such that $V$ is radial and increasing with respect to $|x|$ in $\R^N\setminus B_{r_2}$.
\end{remark}
}

\begin{thm}\label{multipli}
Assume $\be>0$ and $V$ satisfies $(V_0)$-$(V_1)$. Then \eqref{eq} with $\Om=\R^N$ admits for $0<\al< \bar \al$, $\bar\al>0$ as in Theorem~\ref{multiplicity}(ii), a solution $(\la_\al, u_\al)$ with $u_\al>0$, $\la_{\al}>0$, and $E_V(u_\al)>0$. Moreover, $\lim_{\al\to 0}E_V(u_\al)= \infty$.
\end{thm}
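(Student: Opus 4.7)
The plan is to apply Theorem~\ref{multiplicity} with $\Om=B_1$ so that $\Om_r=B_r$, and to pass $r\to\infty$ in the resulting family of mountain pass type solutions $(\la_{r,\al},u_{r,\al})$. Fix $\al\in(0,\bar\al)$ with $\bar\al$ as in Theorem~\ref{multiplicity}(ii); then the uniform $L^\infty$ bound $\limsup_{r\to\infty}\max_{x\in B_r}u_{r,\al}(x)<C_\al$ and the uniform lower bound $\liminf_{r\to\infty}\la_{r,\al}=:\la_0>0$ are at my disposal. After extending each $u_{r,\al}$ by zero to $\R^N$, the family sits inside the fixed sphere $S_\al\subset H^1(\R^N)$, and testing the equation against $u_{r,\al}$ together with the mass constraint, the $L^\infty$ bound, $(V_0)$, and the Gagliardo--Nirenberg inequality gives a uniform bound on $\|\nabla u_{r,\al}\|_2$.

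I would then establish a uniform exponential decay: since $V(x)\to 0$ at infinity, $\la_{r,\al}\ge\la_0/2$ for large $r$, and $u_{r,\al}\ge 0$ is uniformly bounded in $L^\infty$, the equation yields, outside a fixed ball independent of $r$, the supersolution inequality $-\De u_{r,\al}+(\la_0/4)u_{r,\al}\le 0$. A standard comparison with a radial exponential barrier produces constants $C_\al,\mu_\al>0$, independent of $r$ large, with $0\le u_{r,\al}(x)\le C_\al e^{-\mu_\al|x|}$ for all $x\in B_r$.

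The main obstacle is to prevent the mass $\al$ from escaping to infinity as $r\to\infty$. Let $u_\al$ be the weak $H^1$-limit of $u_{r,\al}$ along a subsequence. By concentration-compactness together with the exponential decay just established, any loss of mass must occur through translated profiles $w_n(\cdot)=u_{r_n,\al}(\cdot+y_n)$ with $|y_n|\to\infty$, each profile carrying the same exponential decay centered at $y_n$. Every mountain pass critical point satisfies a Pohozaev--Nehari identity of the form
\[
  \|\nabla u_{r,\al}\|_2^2+\tfrac{1}{2}\int_{B_r}\ti V(x)u_{r,\al}^2\,dx=\tfrac{N(q-2)}{2q}\int|u_{r,\al}|^q\,dx+\tfrac{\be N(p-2)}{2p}\int|u_{r,\al}|^p\,dx+R_r,
\]
with boundary term $R_r\le 0$ by convexity of $B_1$. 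Decomposing $\int\ti Vu_{r,\al}^2$ into its integral over $B(y_n,\rho|y_n|)$ plus a remainder, the exponential decay controls the remainder by $C e^{-2\mu_\al\rho|y_n|}$, while $(V_1)$ provides, for every $\tau>0$, a pointwise lower bound on $\nabla V(y)\cdot x$ for $y\in B(x,\rho|x|)$ that decays strictly slower than any exponential. Choosing $\tau<2\mu_\al\rho$ shows the translated contribution to $\int\ti Vu_{r,\al}^2$ dominates the remainder along $y_n$, yielding a term that cannot be matched by the uniformly bounded right-hand side of the Pohozaev identity. Hence $(y_n)$ must remain bounded and $u_{r,\al}\to u_\al$ strongly in $L^2(\R^N)$.

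Strong $L^2$ convergence together with the uniform $L^\infty$ bound upgrades, by interpolation and the equation, to strong convergence in $H^1(\R^N)$, and $\la_{r,\al}\to\la_\al\ge\la_0>0$. Thus $u_\al\in S_\al$ solves \eqref{eq} on $\R^N$; positivity of $u_\al$ follows from the strong maximum principle since $u_\al\not\equiv 0$, and $E_V(u_\al)>0$ follows by passing to the limit in the mountain pass energy lower bound from Theorem~\ref{multiplicity}. For the asymptotic $E_V(u_\al)\to\infty$ as $\al\to 0^+$, I would use that the mountain pass geometry associated with the mass supercritical term $|u|^{q-2}u$ forces the constrained critical trajectory on $S_\al$ to cross an energy barrier bounded below by a Gagliardo--Nirenberg rescaling quantity that diverges as $\al\to 0$, while the perturbations coming from $V$ and from the subcritical term $\be|u|^{p-2}u$ are uniformly subordinate on small mass spheres. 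The delicate point throughout is the compactness step, where $(V_1)$ and the uniform exponential decay are jointly essential.
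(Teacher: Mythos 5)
Your overall skeleton (solutions on $B_r$ from Theorem~\ref{multiplicity}, uniform bounds, then ruling out escaping mass by playing a Pohozaev-type identity against $(V_1)$) is the right one, but the two steps that carry the compactness argument do not hold up. First, the claimed uniform decay $0\le u_{r,\al}(x)\le C_\al e^{-\mu_\al|x|}$ for all $x\in B_r$ is not justified and in fact begs the question. To get the supersolution inequality $-\De u_{r,\al}+(\la_0/4)u_{r,\al}\le 0$ outside a fixed ball you need $|u_{r,\al}|^{q-2}+\be|u_{r,\al}|^{p-2}\le \la_0/4+V$ there, i.e.\ $u_{r,\al}$ must already be uniformly \emph{small} outside a fixed ball; the uniform $L^\infty$ bound $C_\al$ gives no smallness. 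In the scenario you must exclude, bumps $w^k(\cdot-z_r^k)$ escape with $|z_r^k|\to\infty$, and at the bump centers $u_{r,\al}$ is bounded below by a positive constant (any nontrivial solution of \eqref{3.13} satisfies $|w(x_{\max})|^{q-2}+\be|w(x_{\max})|^{p-2}\ge\la$), so no estimate of the form $u_{r,\al}(x)\le Ce^{-\mu|x|}$ uniform in $r$ can hold unless compactness is already known; if it did hold, mass could not escape at all and the rest of your argument would be superfluous. What can be proved (and is what the paper proves, via the profile decomposition of Lemma~\ref{profile} and an iteration over nested annuli with cut-offs) is exponential smallness of $u_r$ only on an annulus $A_r$ at distance $\sim\delta|z_r^1|$ from the lowest escaping bump, of size $e^{-c|z_r^1|}$ as in \eqref{3.4}; this uses that $\|u_r\|_{L^p(A_r)}\to0$ because $A_r$ is far from $u_0$ and from all bumps, not a global barrier comparison.

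Second, even granting some decay, the \emph{global} Pohozaev--Nehari identity on $B_r$ cannot produce the contradiction. In that identity all terms ($\|\nabla u_r\|_2^2$, $\int|u_r|^q$, $\int|u_r|^p$, and the part of $\int\ti V u_r^2$ coming from the region where $u_0$ and the other bumps live) are $O(1)$, while $(V_1)$ only gives, near the escaping center $y_n$, a \emph{positive but exponentially small} contribution of order $e^{-\tau|y_n|}\|w^1\|_2^2$; your "remainder'' is not exponentially small (it contains the core region) and an exponentially small positive term can never dominate $O(1)$ quantities, so no contradiction arises. The decisive idea in the paper is to \emph{localize} the Pohozaev identity: multiply the equation by $z_r^1\cdot\nabla u_r$ and integrate only over $B(z_r^1,\delta|z_r^1|)\cap B_r$, as in \eqref{3.5}. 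Then the boundary term on $\partial B_r$ has a sign by convexity ($\mathcal A_2\le0$), the boundary term $\mathcal A_1$ on $\partial B(z_r^1,\delta|z_r^1|)$ is controlled by the annulus estimate \eqref{3.4} so that $e^{\tau_0|z_r^1|}\mathcal A_1\to0$, and the only surviving term after multiplying by $e^{\tau_0|z_r^1|}$ is $\frac12\int(z_r^1\cdot\nabla V)u_r^2$, which \eqref{condition} together with $\liminf_r\int_{B(z_r^1,\delta|z_r^1|)}u_r^2\ge\|w^1\|_2^2>0$ keeps bounded away from zero — this is the contradiction. Note also that $(V_1)$ bounds $x\cdot\nabla V(y)$ with $x$ the bump center, which is exactly the integrand produced by the multiplier $z_r^1\cdot\nabla u_r$, not the integrand $\ti V(y)=y\cdot\nabla V(y)$ appearing in the global identity. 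Your final steps (strong $H^1$ convergence once mass does not escape, $\la_\al\ge\la_0>0$, positivity, $E_V(u_\al)>0$, and $E_V(u_\al)\to\infty$ as $\al\to0$ from the lower bound on $m_{r,s}(\al)$ in Lemma~\ref{moun1}, whose exponent $\frac{N(q-2)-2q}{N(q-2)-4}$ is negative since $q<2^*$) are fine, but as written the compactness core of your proof is circular in one place and quantitatively ineffective in the other.
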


Our last theorem deals with the existence of a solution to \eqref{eq} in $\R^N$ in the case $\be\le 0$.

\begin{thm}\label{exist}
Assume $\be\le 0$, $V$ satisfies $(V_0)$-$(V_1)$, and $\|\ti V_+\|_{\frac{N}{2}}< 2S$. Then problem \eqref{eq} with $\Om=\R^N$ admits for $0<\al< \ti \al$, $\ti\al>0$ as in Theorem~\ref{boundexist}(ii), a solution $(\la_\al, u_\al)$ with $u_\al>0$, $\la_{\al}>0$, and $E_V(u_\al)>0$. Moreover, $\lim_{\al\to 0}E_V(u_\al)= \infty$.
\end{thm}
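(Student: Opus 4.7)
The plan is to construct $(\la_\al,u_\al)$ as a limit, as $r\to\infty$, of the mountain pass solutions provided by Theorem~\ref{boundexist}. Take $\Om=B_1$ and, for each $r>r_\al$, let $(\la_{r,\al},u_{r,\al})$ be the positive mountain pass solution of \eqref{eq} on $\Om_r=B_r$ from Theorem~\ref{boundexist}, extended by zero to $\R^N$. By part~(i) of that theorem, $E_V(u_{r,\al})>0$ and $\|u_{r,\al}\|_{L^\infty(\R^N)}\le C_\al$; since $\|\ti V_+\|_{\frac{N}{2}}<2S$ and $0<\al<\ti\al$, part~(ii) gives $\liminf_{r\to\infty}\la_{r,\al}>0$. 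A uniform $H^1(\R^N)$ bound on $u_{r,\al}$ is obtained by combining the equation tested against $u_{r,\al}$, the $L^\infty$-bound, $\|V_-\|_{\frac{N}{2}}<S$, and an $r$-independent upper bound on the mountain pass level (via a fixed test path). Up to a subsequence, $u_{r,\al}\rightharpoonup u_\al\ge 0$ in $H^1(\R^N)$ and $\la_{r,\al}\to\la_\al>0$, with $u_\al$ a weak solution of \eqref{eq} on $\R^N$.

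The core of the argument is strong $H^1$-convergence $u_{r,\al}\to u_\al$, equivalent to $\|u_\al\|_2^2=\al$. Two ingredients come together. Since $\la_\al>0$ and $V(x)\to 0$ at infinity, comparison with an exponential barrier yields a uniform decay estimate $u_{r,\al}(x)\le C e^{-\tau|x|}$ for $|x|$ large and all large $r$. Combining the Pohozaev identity on $B_r$ with the energy identity gives a relation of the form
\begin{equation*}
\tfrac{r}{2}\int_{\pa B_r}|\nabla u_{r,\al}|^2\,d\si+\tfrac{1}{2}\int_{B_r}\ti V(x)\,u_{r,\al}^2\,dx=R_{r,\al},
\end{equation*}
where the boundary term is non-negative by convexity of $B_r$ and $R_{r,\al}$ is uniformly bounded in terms of $\|u_{r,\al}\|_q^q$, $\|u_{r,\al}\|_p^p$ and $\|\nabla u_{r,\al}\|_2^2$. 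If a positive fraction of the $L^2$-mass of $u_{r,\al}$ concentrated around centers $y_r$ with $|y_r|\to\infty$, then assumption~\eqref{condition}, which encodes a sub-exponential lower bound for $x\cdot\nabla V(y)$ on the balls $B(x,\rho|x|)$, combined with the uniform exponential decay of $u_{r,\al}$ outside the mass-concentration region, would make a weighted Pohozaev integrand of the above type grow unbounded relative to the right-hand side, a contradiction. Hence the $L^2$-mass stays in a fixed ball; Rellich compactness yields strong convergence in $L^s(\R^N)$ for $s\in[2,2^*)$, and testing the difference of equations against $u_{r,\al}-u_\al$ upgrades this to $H^1$-strong convergence. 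Consequently $\|u_\al\|_2^2=\al$ and $E_V(u_\al)=\lim_{r\to\infty}E_V(u_{r,\al})>0$; positivity $u_\al>0$ then follows from the strong maximum principle applied to the bounded-coefficient operator $-\De+(V+\la_\al-u_\al^{q-2}-\be u_\al^{p-2})$.

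For the last assertion $\lim_{\al\to 0}E_V(u_\al)=\infty$, eliminating $\la_\al\al$ between the Pohozaev and the energy identities for $u_\al$ produces
\begin{equation*}
\tfrac{N(q-2)}{2q}\|u_\al\|_q^q+\be\tfrac{N(p-2)}{2p}\|u_\al\|_p^p=\|\nabla u_\al\|_2^2-\tfrac{1}{2}\int_{\R^N}\ti V\,u_\al^2\,dx.
\end{equation*}
Using $q>2+\frac{4}{N}$, $p<2+\frac{4}{N}$, $\be\le 0$, the Gagliardo-Nirenberg inequality, and $\|\ti V_+\|_{\frac{N}{2}}<2S$, a standard scaling argument (in the spirit of Jeanjean's autonomous setup) yields $\|\nabla u_\al\|_2^2\to\infty$ as $\al\to 0$ and, in fact, a lower bound $E_V(u_\al)\ge c_0\al^{-\mu}$ for constants $c_0,\mu>0$, which diverges as $\al\to 0$.

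The hardest step is the compactness argument of the second paragraph: with $V$ merely decaying to zero, the sole obstruction to relative compactness of $(u_{r,\al})$ is a mass bubble escaping to infinity, and excluding it requires the delicate interplay between the uniform exponential decay guaranteed by $\la_\al>0$ and the sub-exponential lower bound \eqref{condition} on $x\cdot\nabla V$, read through a Pohozaev identity whose boundary term is under control thanks to the convexity of $B_r$.
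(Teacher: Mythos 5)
Your overall strategy (limit of the mountain pass solutions on $B_r$, uniform $H^1$ and $L^\infty$ bounds, $\la_\al>0$ from Theorem~\ref{boundexist}(ii), strong maximum principle, and the Pohozaev/energy elimination for $\lim_{\al\to0}E_V(u_\al)=\infty$) is consistent with the paper, and the first and last paragraphs can be made rigorous. The genuine gap is in the compactness step, and it is twofold. First, the claimed \emph{global} uniform decay estimate $u_{r,\al}(x)\le Ce^{-\tau|x|}$ for all large $|x|$, uniformly in $r$, cannot be established at this stage: the comparison/barrier argument requires $u_{r,\al}$ to be uniformly small outside a fixed ball so that the term $|u_{r,\al}|^{q-2}$ can be absorbed by $\la/2$, and this smallness is exactly what fails in the scenario you must exclude, namely a bump $w^1(\cdot-z_r^1)$ of fixed $L^2$-mass escaping with $|z_r^1|\to\infty$ (there $\sup_{B(z_r^1,1)}u_{r,\al}$ stays bounded away from $0$). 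The paper avoids this circularity by first proving a profile decomposition (Lemma~\ref{profile}), which yields finitely many bumps with diverging mutual distances and a quantitative lower bound $\|w^k\|_{H^1}\ge d$; only then can one choose an annulus $A_r$ around the lowest bump, at scale $\delta|z_r^1|$ with $\delta<\frac14\min\{1,\rho,d^*\}$, on which $\|u_r\|_{L^p(A_r)}\to0$, and run a cut-off iteration to get exponential smallness \eqref{3.4} \emph{on that annulus only}. Without the decomposition, "outside the mass-concentration region" is not well defined, and you also lack the mass lower bound $\int_{B(z_r^1,\delta|z_r^1|)}u_r^2\ge\|w^1\|_2^2>0$ that the contradiction requires.

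Second, the Pohozaev identity you invoke is the \emph{global} one on $B_r$ (multiplier $x\cdot\nabla u_r$), and it cannot produce a contradiction: under \eqref{condition} the contribution of an escaping bump to $\int_{B_r}\ti V\,u_{r,\al}^2\,dx$ is positive but only of size $\gtrsim e^{-\tau|z_r^1|}\|w^1\|_2^2$, hence tends to $0$, while the remaining terms of the identity (the boundary term, the region near the origin where $\ti V$ is merely bounded and may be negative, and $R_{r,\al}$) are $O(1)$; nothing "grows unbounded relative to the right-hand side." What is needed is the \emph{local} Pohozaev identity of the paper, obtained by multiplying \eqref{boundeq1} by $z_r^1\cdot\nabla u_r$ and integrating over $B(z_r^1,\delta|z_r^1|)\cap B_r$ (see \eqref{3.5}): then every term except $\int(z_r^1\cdot\nabla V)u_r^2$ is supported on $\Ga_1\cup\Ga_2$, the $\Ga_2$-part is $\le0$ by the sign of $z_r^1\cdot\mathbf n$ and the Hopf-type structure of $\nabla u_r$ on $\partial B_r$, and the $\Ga_1$-part is exponentially small at a \emph{fixed} rate $\tau_0$ thanks to the annulus decay estimate; the sub-exponential lower bound in \eqref{condition} (uniform over $y\in B(x,\rho|x|)$, which is why $\delta<\rho$ matters) then beats $e^{-\tau_0|z_r^1|}$ and gives the contradiction. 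So the conclusion is correct, but your proof as written needs the profile decomposition and the localized Pohozaev identity to close; the global identity plus a global decay claim does not.
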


In the proofs of Theorems \ref{exis}, \ref{multipli} and \ref{exist}, we develop a rather robust method to investigate the existence of normalized solutions in $\R^N$, that can also be used to study the existence of normalized solutions of other elliptic partial differential equations with potentials.

\begin{remark}\rm
a) Theorem~\ref{exist} is new even in the case $\be=0$. In \cite{Bartsch-Molle-Rizzi-Verzini2021} and \cite{Molle-etal:2022} the case $\be=0$ with $V\ge0$ is treated.

b) It is shown in  \cite{Jeanjean-Zhang-Zhong2021} that even for the case
$V\equiv0$ and $\be>0$, there is no nonnegative normalized solution for large $\al$. Moreover, a stronger condition than \eqref{condition} is used to consider the fixed frequency problem in \cite{Sato-Shibata}.
\end{remark}

The rest of this paper is organized as follows. In section 2 we investigate the existence and multiplicity of normalized solutions in large bounded smooth convex domains $\Om_r$ for the cases $\be>0$ and $\be\le 0$, respectively, and  obtain the sign of the Lagrange multiplier. In section 3 we specialize $\Om$ to the unit ball $B_1\subset\R^N$, establish the profile decomposition of the normalized solution $u_{r,\al}$ in $B_r$, and then show the compactness of $u_{r,\al}$ as $r$ tends to infinity.

\section{Normalized solutions in large bounded domains}
In this section, we use different approaches to consider the existence and multiplicity of normalized solutions in large bounded smooth convex domains
since the energy functionals possess different geometric structures according to the sign of $\be$. Consider the problem
\begin{equation}\label{boundeq1}
\begin{cases}
-\De u+V(x)u+\la u=|u|^{q-2}u+\be |u|^{p-2}u\quad \text{in}\ \Om_r,\\
u\in H^1_0(\Om_r),\quad \int_{ \Om_r}|u|^2dx=\al,
\end{cases}
\end{equation}
where $N\ge 3$, $2<p<2+\frac{4}{N}<q<2^*$, the mass $\al>0$ and the parameter $\be\in \R$ are prescribed,
and the frequency $\la$ is unknown and to be determined by the solution. The energy functional $E_r:H_0^1(\Om_r)\to \R$  is defined by
\begin{equation}
E_r(u)=\frac{1}{2}\int_{ \Om_r}|\nabla u|^2dx+\frac{1}{2}\int_{\Om_r}V(x)u^2dx-\frac{1}{q}\int_{ \Om_r}|u|^qdx-\frac{\be}{p}\int_{ \Om_r}|u|^pdx,
\end{equation}
and the mass constraint manifold is defined by
\begin{equation}\label{boundmass}
S_{r,\al}=\left\{u\in H_0^1(\Om_r): \|u\|_2^2=\al\right\}.
\end{equation}

\subsection{Proof of Theorem~\ref{boundexist}}
In this subsection we always assume that the assumptions of Theorem \ref{boundexist} hold. In order to obtain a bounded Palais-Smale sequence, we will use the monotonicity trick inspired by \cite{Jeanjean1999,Borthwick-Chang-Jeanjean-Soave,Chang-Jeanjean-Soave}. For $\frac{1}{2}\le s\le 1$ we define the functional $E_{r,s}: S_{r,\al}\to \R$ by
\begin{equation}\label{2-7}
E_{r,s}(u)=\frac{1}{2}\int_{ \Om_r}|\nabla u|^2dx+\frac{1}{2}\int_{\Om_r}Vu^2dx-\frac{s}{q}\int_{ \Om_r}|u|^qdx-\frac{\be}{p}\int_{ \Om_r}|u|^pdx.
\end{equation}
Note that if $u\in S_{r,\al}$ is a critical point of $E_{r,s}$, then there exists $\la\in \R$
such that $(\la,u)$ is a solution of the equation
\begin{equation}\label{meq1}
\begin{cases}
-\De u+Vu+\la u=s|u|^{q-2}u+\be |u|^{p-2}u\quad \text{in}\ \Om_r,\\
u\in H^1_0(\Om_r),\quad \int_{\Om_r}|u|^2dx=\al.
\end{cases}
\end{equation}

\begin{lemma}\label{moun}
For any $\al>0,$ there exist $r_\al>0$ and  $u^0,u^1\in S_{r_\al,\al}$ such that
\begin{itemize}
\item [\rm (i)]
$E_{r,s}(u^1)\le0$  for any $r>r_\al$ and $s\in \left[\frac{1}{2},1\right]$,
$$\|\nabla u^0\|_2^2 < \left(\frac{2q}{N(q-2)C_{N,q}}\left(1-\|V_-\|_{\frac{N}{2}}S^{-1}\right)
  \al^{\frac{q(N-2)-2N}{4}}\right)^{\frac{4}{N(q-2)-4}} < \|\nabla u^1\|_2^2 $$
and
$$E_{r,s}(u^0)<\frac{(N(q-2)-4)(1-\|V_-\|_{\frac{N}{2}}S^{-1})}{2N(q-2)}
  \left(\frac{2q(1-\|V_-\|_{\frac{N}{2}}S^{-1})}{N(q-2)C_{N,q}\al^\frac{{2q-N(q-2)}}{4}}\right)^{\frac{4}{N(q-2)-4}}.$$

\item[\rm (ii)]If $u\in S_{r,\al}$ satisfies
$$\|\nabla u\|_2^2=\left(\frac{2q}{N(q-2)C_{N,q}}\left(1-\|V_-\|_{\frac{N}{2}}S^{-1}\right)
  \al^{\frac{q(N-2)-2N}{4}}\right)^{\frac{4}{N(q-2)-4}}$$
then there holds
$$E_{r,s}(u)\ge \frac{(N(q-2)-4)(1-\|V_-\|_{\frac{N}{2}}S^{-1})}{2N(q-2)}
  \left(\frac{2q(1-\|V_-\|_{\frac{N}{2}}S^{-1})}{N(q-2)C_{N,q}\al^{\frac{2q-N(q-2)}{4}}}\right)^{\frac{4}{N(q-2)-4}}.$$

\item [\rm(iii)] Set
$${m_{r,s}(\al)}=\inf_{\gamma\in \Gamma_{r,\al}}\sup_{t\in [0,1]}E_{r,s}(\gamma(t)),$$
{with
$$\Gamma_{r,\al}=\left\{\gamma\in C([0,1],S_{r,\al}):\gamma(0)=u^0, \gamma(1)=u^1\right\}.$$}
Then
$$\frac{(N(q-2)-4)(1-\|V_-\|_{\frac{N}{2}}S^{-1})}{2N(q-2)}
\left(\frac{2q(1-\|V_-\|_{\frac{N}{2}}S^{-1})}{N(q-2)C_{N,q}\al^{\frac{2q-N(q-2)}{4}}}\right)^{\frac{4}{N(q-2)-4}}\le m_{r,s}(\al)\le T_\al,$$
where ${T_\al}=\mathop{\max}\limits_{t\in\R^+}h(t)$, the function $h:\R^+\to\R$ being defined by
$$h(t)=\frac{1}{2}\left(1+\|V\|_{\frac{N}{2}}S^{-1}\right)t^2\theta\al-\frac{\be C_{N,p}}{p}\al^{\frac{p}{2}}\theta^{\frac{N(p-2)}{4}}
t^{\frac{N(p-2)}{2}}-\frac{1}{2q}\al^{\frac{q}{2}}|\Om|^{\frac{2-q}{2}}t^{\frac{N(q-2)}{2}}.$$
Here $\theta$ is the principal eigenvalue of $-\De$ with Dirichlet boundary conditions in $\Om$, and $|\Om|$ is the volume of $\Om$.
\end{itemize}
\end{lemma}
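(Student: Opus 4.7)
The plan is to organize everything around a single Gagliardo--Nirenberg function $g$ whose unique positive maximizer $t_*$ will be the mountain pass threshold, together with a one-parameter family of mass-preserving scalings $u^t(x):=t^{N/2}\phi(tx)$ of the principal Dirichlet eigenfunction, which supplies both endpoints $u^0,u^1$ and the interpolating path realizing the upper bound on $m_{r,s}(\al)$.

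For (ii), and hence for the lower bound in (iii): for $u\in S_{r,\al}$, assumption $(V_0)$ with H\"older and Sobolev yields $\int_{\Om_r}Vu^2\,dx\ge -\|V_-\|_{\frac{N}{2}}S^{-1}\|\nabla u\|_2^2$, and Gagliardo--Nirenberg gives $\|u\|_q^q\le C_{N,q}\al^{(2q-N(q-2))/4}\|\nabla u\|_2^{N(q-2)/2}$. Since $s\le 1$ and $\be\le 0$ forces $-\frac{\be}{p}\|u\|_p^p\ge 0$,
\begin{equation*}
E_{r,s}(u)\ge g(\|\nabla u\|_2),\quad g(t):=\tfrac{A}{2}t^2-\tfrac{C_{N,q}\al^{(2q-N(q-2))/4}}{q}t^{N(q-2)/2},\ A:=1-\|V_-\|_{\frac{N}{2}}S^{-1}>0.
\end{equation*}
Because $N(q-2)/2>2$, $g$ has a unique positive maximizer $t_*$; solving $g'(t_*)=0$ reproduces exactly the value of $t_*^2$ appearing in (ii), and the identity $g(t_*)=\frac{A(N(q-2)-4)}{2N(q-2)}t_*^2$ matches the asserted lower bound, proving (ii).

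For (i), let $\phi_1\in H^1_0(\Om)$ be the positive principal Dirichlet eigenfunction with $\|\phi_1\|_2=1$ and $\|\nabla\phi_1\|_2^2=\theta$, set $\phi:=\sqrt{\al}\,\phi_1$, and put $u^t(x):=t^{N/2}\phi(tx)$, which has $L^2$-mass $\al$ and support in $t^{-1}\bar\Om$. Convexity of $\Om$ together with $0\in\Om$ gives $t^{-1}\bar\Om\subset r\bar\Om$ whenever $rt\ge 1$, so $u^t\in S_{r,\al}$ for every $r\ge 1/t$. A direct scaling computation combined with $\int V(u^t)^2\,dx\le\|V\|_{\frac{N}{2}}S^{-1}\|\nabla u^t\|_2^2$ yields
\begin{equation*}
E_{r,s}(u^t)\le\tilde h_s(t):=\tfrac{1+\|V\|_{\frac{N}{2}}S^{-1}}{2}\al\theta\,t^2-\tfrac{s\al^{q/2}\|\phi_1\|_q^q}{q}t^{\frac{N(q-2)}{2}}-\tfrac{\be\al^{p/2}\|\phi_1\|_p^p}{p}t^{\frac{N(p-2)}{2}}.
\end{equation*}
Since $N(q-2)/2>\max\{2,N(p-2)/2\}$, $\tilde h_s(t)\to 0$ as $t\to 0^+$ and $\tilde h_s(t)\to -\infty$ as $t\to\infty$, both uniformly in $s\in[\tfrac{1}{2},1]$. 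Pick $T_1$ large with $T_1^2\al\theta>t_*^2$ and $\tilde h_s(T_1)\le 0$, then $T_0\in(0,T_1)$ small with $T_0^2\al\theta<t_*^2$ and $\tilde h_s(T_0)<g(t_*)$; set $u^0:=u^{T_0}$, $u^1:=u^{T_1}$, and $r_\al:=1/T_0$.

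For (iii), the path $\gamma(\sigma):=u^{(1-\sigma)T_0+\sigma T_1}$ lies in $S_{r,\al}$ for every $r>r_\al$ and satisfies $E_{r,s}(\gamma(\sigma))\le\tilde h_s(t(\sigma))\le h(t(\sigma))\le T_\al$, where the passage from $\tilde h_s$ to the explicit $h$ uses $s\ge\tfrac{1}{2}$, the H\"older bound $\|\phi_1\|_q^q\ge|\Om|^{(2-q)/2}$ on the eigenfunction, and the Gagliardo--Nirenberg bound $\|\phi_1\|_p^p\le C_{N,p}\theta^{N(p-2)/4}$; this yields $m_{r,s}(\al)\le T_\al$. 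Conversely, any $\gamma\in\Gamma_{r,\al}$ has $\|\nabla\gamma(0)\|_2<t_*<\|\nabla\gamma(1)\|_2$, so by continuity there is $\sigma_0\in(0,1)$ with $\|\nabla\gamma(\sigma_0)\|_2=t_*$, at which (ii) gives $E_{r,s}(\gamma(\sigma_0))\ge g(t_*)$. The main technical point is keeping the rescaled functions $u^t$ simultaneously admissible in $H^1_0(\Om_r)$ for every scale appearing on the path --- handled by setting $r_\al=1/T_0$ and using convexity of $\Om$ together with $0\in\Om$ --- while organizing all of the estimates $g\le E_{r,s}\le\tilde h_s\le h$ uniformly in $s\in[\tfrac{1}{2},1]$.
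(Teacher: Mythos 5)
Your proposal is correct and follows essentially the same route as the paper: mass-preserving dilations of the principal Dirichlet eigenfunction give the endpoints $u^0,u^1$ and the path for the upper bound $m_{r,s}(\al)\le T_\al$, while the Gagliardo--Nirenberg/H\"older lower bound (your $g$, the paper's $f$ up to the substitution $t\mapsto t^2$) and a continuity/crossing argument at the maximizer give (ii) and the lower bound in (iii). The only differences are cosmetic choices of the two scales and of $r_\al$ (the paper takes $u^1$ at the zero $t_0$ of $h$ and $r_\al=\max\{1/t_0,\ti r_\al\}$, you take $r_\al=1/T_0$), which do not affect the argument.
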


\vskip 0.1in
\begin{proof}(i)
Clearly the set $S_{r,\al}$ is path connected. Let $v_1\in S_{1,\al}$ be the positive eigenfunction associated to $\theta$. Then we have
\begin{equation}
\int_{\Om}|\nabla v_1|^2dx=\theta\al \quad\text{and}\quad \int_{\Om}| v_1|^qdx\ge\al^{\frac{q}{2}}|\Om|^{\frac{2-q}{2}}.
\end{equation}
by the H\"older inequality. Setting $v_t(x)=t^{\frac{N}{2}}v_1(tx)$ for $x\in \Om_{\frac{1}{t}}$ and $t>0$  there holds
\begin{equation}\label{2.15}
\begin{aligned}
E_{\frac{1}{t},s}(v_t) &\leq \frac{1}{2}\left(1+\|V\|_{\frac{N}{2}}S^{-1}\right)t^2\int_{\Om}|\nabla v_1|^2dx
-\frac{\be C_{N,p}}{p}\al^{\frac{2p-N(p-2)}{4}}\left(t^2\int_{\Om}|\nabla v_1|^2dx\right)^{\frac{N(p-2)}{4}}\\
&\hspace{1cm} -\frac{1}{2q}t^{\frac{N(q-2)}{2}}\int_{\Om}| v_1|^qdx\\
&\le h(t).\\
\end{aligned}
\end{equation}
Note that since $2<p<2+\frac{4}{N}<q<2^*$ and $\be<0$ there exist $0<_\al<t_0$ such that $h(t_0)=0$, $h(t)<0$ for any $t>t_0$,
$h(t)>0$ for any $0<t<t_0$ and $h(T_\al)=\mathop{\max}\limits_{t\in\R^+}h(t)$.
As a consequence, there holds
\begin{equation}\label{2.30}
E_{r,s}(v_{t_0})= E_{\frac{1}{t_0},s}(v_{t_0})\le h(t_0)=0
\end{equation}
for any {$r\ge\frac{1}{t_0}$} and $s\in\left[\frac{1}{2},1\right]$. Moreover, there exists $0<t_1<T_\al$ such that
\begin{equation}\label{2.31}
h(t)<\frac{(N(q-2)-4)(1-\|V_-\|_{\frac{N}{2}}S^{-1})}{2N(q-2)}
\left(\frac{2q(1-\|V_-\|_{\frac{N}{2}}S^{-1})}{N(q-2)C_{N,q}\al^\frac{{2q-N(q-2)}}{4}}\right)^{\frac{4}{N(q-2)-4}} \text{ for } t\in [0,t_1].
\end{equation}
On the other hand, it follows from the  Gagliardo-Nirenberg inequality and the H\"older inequality that
\begin{equation}\label{2.23}
E_{r,s}(u)\ge \frac{1}{2}\left(1-\|V_-\|_{\frac{N}{2}}S^{-1}\right)\int_{\Om_r}|\nabla u|^2dx-
\frac{C_{N,q}\al^{\frac{2q-N(q-2)}{4}}}{q}\left(\int_{\Om_r}|\nabla u|^2dx\right)^{\frac{N(q-2)}{4}}.
\end{equation}
Let
$$f(t)=\frac{1}{2}\left(1-\|V_-\|_{\frac{N}{2}}S^{-1}\right)t-
\frac{C_{N,q}\al^{\frac{2q-N(q-2)}{4}}}{q}t^{\frac{N(q-2)}{4}},$$
and 
$$\ti t =\left(\frac{2q}{N(q-2)C_{N,q}}\left(1-\|V_-\|_{\frac{N}{2}}S^{-1}\right)\al^{\frac{q(N-2)-2N}{4}}\right)^{\frac{4}{N(q-2)-4}}.$$
Then $f$ is increasing in $(0,\ti t)$ and decreasing in $(\ti t,\infty)$, and
$$f(\ti t)= \frac{(N(q-2)-4)(1-\|V_-\|_{\frac{N}{2}}S^{-1})}{2N(q-2)}
\left(\frac{2q(1-\|V_-\|_{\frac{N}{2}}S^{-1})}{N(q-2)C_{N,q}\al^{\frac{2q-N(q-2)}{4}}}\right)^{\frac{4}{N(q-2)-4}}.$$
For $r\ge\ti r_\al:=\max\left\{\frac{1}{t_1},\sqrt{\frac{2\theta\al}{\ti t}}\right\}$ we have $v_{\frac{1}{\ti r_\al}}\in S_{r,\al}$ and
\begin{equation}\label{2.28}
\|\nabla v_{\frac{1}{\ti r_\al}}\|_2^2=\left(\frac{1}{\ti r_\al}\right)^2\|\nabla v_1\|_2^2<\left(\frac{2q}{N(q-2)C_{N,q}}\left(1-\|V_-\|_{\frac{N}{2}}S^{-1}\right)\al^{\frac{q(N-2)-2N}{4}}\right)^{\frac{4}{N(q-2)-4}}.
\end{equation}
Moreover, there holds
\begin{equation}\label{2.29}
E_{\ti r_\al,s}(v_{\frac{1}{\ti r_\al}})\le h\left(\frac{1}{\ti r_\al}\right)\le h(t_1).
\end{equation}
Setting $u^0=v_{\frac{1}{\ti r_\al}},\ u^1=v_{t_0}$ and
\begin{equation}\label{2-16}
r_\al=\max\left\{\frac{1}{t_0}, \ti r_\al\right\}
\end{equation}
the statement (i) holds due to \eqref{2.28},\eqref{2.29}, \eqref{2.31} and \eqref{2.30}.

\vskip 0.1in
(ii) Statement (ii) holds by \eqref{2.23} and a  direct calculation.
\vskip 0.1in

 (iii) Since  $E_{r,s}(u^1)\le0$ for any $\gamma\in\Gamma_{r,\al}$, we have
 $$\|\nabla \gamma(0)\|_2^2<\ti t< \|\nabla \gamma(1)\|_2^2.$$
 It then follows from \eqref{2.23} that
 $$
 \begin{aligned}
  \max_{t\in[0,1]}E_{r,s}(\gamma(t))
     &\ge f(\ti t)\\
     &= \frac{(N(q-2)-4)(1-\|V_-\|_{\frac{N}{2}}S^{-1})}{2N(q-2)}
             \left(\frac{2q(1-\|V_-\|_{\frac{N}{2}}S^{-1})}{N(q-2)C_{N,q}\al^\frac{{2q-N(q-2)}}{4}}\right)^{\frac{4}{N(q-2)-4}}
 \end{aligned}
$$
for any $\gamma\in\Gamma_{r,\al}$, hence the first inequality in (iii) holds. Now we define a path $\gamma\in \Gamma_{r,\al}$ by
 \begin{equation*}
\gamma(\tau)(x)=\left(\tau t_0+(1-\tau)\frac{1}{\ti r_\al}\right)^{\frac{N}{2}}v_1\left(\left(\tau t_0+(1-\tau)\frac{1}{\ti r_\al}\right) x\right)
\end{equation*}
for $\tau\in [0,1]$ and $x\in \Om_r$. Then by \eqref{2.15} we have $m_{r,s}(\al)\le T_\al,$  where  $T_\al=\mathop{\max}\limits_{t\in\R^+}h(t)$.
 Note that $T_\al$ is independent of $r$ and $s$. This finishes the proof of (iii). 
\end{proof}

In view of Lemma \ref{moun}, the energy functional $E_{r,s}$ possesses the mountain pass geometry. Next we recall a theorem from \cite{Borthwick-Chang-Jeanjean-Soave,Chang-Jeanjean-Soave}.

\begin{thm}[Theorem 1, \cite{Borthwick-Chang-Jeanjean-Soave}]\label{criticalprinciple}
Let $(E,\langle\cdot,\cdot\rangle)$ and $(H,(\cdot,\cdot))$ be two infinite-dimensional Hilbert spaces and assume there are continuous injections
$$E\hookrightarrow H\hookrightarrow E^\prime.$$
Let
$$\|u\|^2=\langle u,u\rangle,\quad |u|^2=(u,u)\quad \text{for }u\in E,$$
and
$$S_\mu=\{u\in E:|u|^2=\mu\},\quad T_uS_\mu=\{v\in E:(u,v)=0\}\quad \text{for } \mu\in(0,+\infty).$$
Let
$I\subset(0,+\infty)$ be an interval and consider a family of $C^2$ functionals $\Phi_\rho: E\to\R$
of the form
$$\Phi_\rho(u)=A(u)-\rho B(u),\quad \text{for } \rho\in I,$$
with $B(u)\ge 0$ for every $u\in E$, and
\begin{equation}\label{2-8}
  A(u)\to+\infty\quad \text{or}\quad B(u)\to+\infty\ \ \text{as}\ u\in E\ \text{and}\ \|u\|\to+\infty.
\end{equation}
Suppose moreover that  $\Phi^\prime_\rho$ and $\Phi^{\prime\prime}_\rho$ are
$\tau-$H\"older continuous, $\tau\in(0, 1]$, on bounded sets in the following sense: for every $R>0$ there exists $M=M(R)>0$
such that
\begin{equation}\label{2-14}
\|\Phi^\prime_\rho(u)-\Phi^\prime_\rho(v) \|\le M \|u-v\|^\tau \quad\text{and}\quad
  \|\Phi^{\prime\prime}_\rho(u)-\Phi^{\prime\prime}_\rho(v) \|\le M \|u-v\|^\tau
\end{equation}
for every $u,v\in B(0,R)$. Finally, suppose that there exist $w_1,w_2\in S_\mu$ independent of $\rho$ such that
$$c_\rho:=\inf_{\gamma\in\Gamma}\max_{t\in [0,1]}\Phi_\rho(\gamma(t))>\max\{\Phi_\rho(w_1),\Phi_\rho(w_2)\}\quad \text{for all }\rho\in I$$
where
$$\Gamma=\{\gamma\in C([0,1],S_\mu):\gamma(0)=w_1,\ \gamma(1)=w_2\}.$$
Then for almost every $\rho\in I$ there exists a sequence $\{u_n\}\subset S_\mu$ such that
\begin{itemize}
\item[\rm (i)] $\Phi_\rho(u_n)\to c_\rho,$
\item[\rm (ii)] $\Phi^\prime_\rho|_{S_\mu}(u_n)\to0,$
\item[\rm (iii)] $\{u_n\}$ is bounded in $E$.
\end{itemize}
\end{thm}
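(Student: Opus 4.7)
The plan is to apply the monotonicity trick due to Jeanjean \cite{Jeanjean1999}, adapted to the $C^2$ manifold setting. Since $B(u)\ge 0$, the map $\rho\mapsto c_\rho$ is nonincreasing on $I$: for $\rho_1<\rho_2$ in $I$ and every $u\in S_\mu$ one has $\Phi_{\rho_1}(u)\ge\Phi_{\rho_2}(u)$, hence $c_{\rho_1}\ge c_{\rho_2}$ after passing to the inf-sup over $\Ga$. By the classical Lebesgue differentiation theorem for monotone real functions, the derivative $c'(\rho):=\frac{d}{d\rho}c_\rho$ exists for almost every $\rho\in I$. I fix such a $\rho$ and aim to produce a bounded $(PS)_{c_\rho}$ sequence for $\Phi_\rho|_{S_\mu}$.

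The key quantitative step is a difference-quotient argument. Choose $\rho_n<\rho$ with $\rho_n\to\rho$ and, by the definition of $c_{\rho_n}$, select paths $\ga_n\in\Ga$ with
$$\sup_{t\in[0,1]}\Phi_{\rho_n}(\ga_n(t))\le c_{\rho_n}+(\rho-\rho_n).$$
Define the near-maximum slice $M_n:=\{t\in[0,1]:\Phi_\rho(\ga_n(t))\ge c_\rho-(\rho-\rho_n)\}$, which is nonempty because $\max_t\Phi_\rho(\ga_n(t))\ge c_\rho$. For $t\in M_n$ the identity $\Phi_{\rho_n}-\Phi_\rho=(\rho-\rho_n)B$ yields
$$B(\ga_n(t))=\frac{\Phi_{\rho_n}(\ga_n(t))-\Phi_\rho(\ga_n(t))}{\rho-\rho_n}\le \frac{c_{\rho_n}-c_\rho}{\rho-\rho_n}+2\longrightarrow -c'(\rho)+2.$$
Hence $B(\ga_n(t))$ is uniformly bounded on $\bigcup_n\{\ga_n(t):t\in M_n\}$, and since $\Phi_\rho(\ga_n(t))$ is bounded there as well, $A(\ga_n(t))=\Phi_\rho(\ga_n(t))+\rho B(\ga_n(t))$ is bounded too. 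The coercivity hypothesis \eqref{2-8} then supplies a uniform bound $\|\ga_n(t)\|\le R$ for $t\in M_n$.

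With the near-maximum set confined to a bounded region of $E$, I would convert this information into a bounded Palais-Smale sequence through a quantitative deformation lemma on the Hilbert manifold $S_\mu$. Using the $C^2$ regularity and the H\"older estimates \eqref{2-14}, one constructs a locally Lipschitz pseudo-gradient vector field for $\Phi_\rho|_{S_\mu}$ tangent to $S_\mu$, truncated to vanish outside $B(0,R+2)$. Its flow preserves $S_\mu$, exists for all positive times and strictly decreases $\Phi_\rho$ away from a prescribed neighborhood of critical points at level $c_\rho$. If no bounded $(PS)_{c_\rho}$ sequence existed, applying this flow to $\ga_n$ would push $\max_t\Phi_\rho(\ga_n(t))$ strictly below $c_\rho$ for $n$ large (the endpoints $w_1,w_2$ being safely below $c_\rho$ by the strict mountain-pass inequality), contradicting the definition of $c_\rho$.

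The main obstacle I anticipate is the construction of this localized tangential pseudo-gradient flow and the proof that it keeps $S_\mu$ invariant while remaining confined to a bounded subset of $E$. This is precisely where the $C^2$ hypothesis and the H\"older bounds \eqref{2-14} are consumed, to ensure that the abstract continuity of $\Phi_\rho''$ produces a genuinely locally Lipschitz pseudo-gradient on the cotangent bundle of $S_\mu$; a purely $C^1$ argument would not suffice to run the flow argument cleanly. A secondary delicate point is calibrating the error scale $(\rho-\rho_n)$ in the choice of $\ga_n$, simultaneously slow enough that $M_n$ is nonempty and fast enough that the difference quotient converges to $-c'(\rho)$, a calibration that ultimately produces the \emph{almost every} $\rho$ clause of the conclusion.
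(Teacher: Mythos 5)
This theorem is not proved in the paper at all: it is quoted verbatim from Borthwick--Chang--Jeanjean--Soave (Theorem 1), and your outline follows exactly the monotonicity-trick strategy on which that cited result (going back to Jeanjean's 1999 argument) is based -- monotonicity and a.e.\ differentiability of $\rho\mapsto c_\rho$, the difference-quotient bound confining near-maximal path segments to a bounded set via \eqref{2-8}, and a localized tangential deformation on $S_\mu$ yielding a bounded Palais--Smale sequence at level $c_\rho$ -- so it is essentially the same approach as the source. The only inessential slip is your claim that the $C^2$ and H\"older hypotheses \eqref{2-14} are what make the flow argument work: a $C^1$ pseudo-gradient construction already suffices for conclusions (i)--(iii), and in the cited work those stronger hypotheses are used to extract additional second-order (Morse-index type) information that is not part of the statement quoted here.
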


\begin{remark}\rm
One can see \cite{Borthwick-Chang-Jeanjean-Soave,Chang-Jeanjean-Soave} for more details on assumption \eqref{2-14}.
\end{remark}

Lemma \ref{moun} and Theorem \ref{criticalprinciple} yield the following result.

\begin{thm}\label{existmeq}
For any $\al>0$, let $r>r_\al$, where $r_\al$ is defined in Lemma \ref{moun}.
Then  problem \eqref{meq1} admits a solution $(\la_{r,s},u_{r,s})$ for almost every $s\in\left[\frac{1}{2},1\right].$
Moreover, $u_{r,s}\ge 0$ and $E_{r,s}(u_{r,s})=m_{r,s}(\al)$.
\end{thm}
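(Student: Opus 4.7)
The plan is to apply the abstract Theorem~\ref{criticalprinciple} (the monotonicity trick) with $E=H^1_0(\Om_r)$, $H=L^2(\Om_r)$, $\mu=\al$, $S_\mu=S_{r,\al}$, $I=[\tfrac12,1]$, and the decomposition $\Phi_s(u)=E_{r,s}(u)=A(u)-sB(u)$, where
\[
A(u)=\tfrac12\int_{\Om_r}|\nabla u|^2\,dx+\tfrac12\int_{\Om_r}Vu^2\,dx-\tfrac{\be}{p}\int_{\Om_r}|u|^p\,dx,\qquad B(u)=\tfrac1q\int_{\Om_r}|u|^q\,dx\ge 0.
\]
Lemma~\ref{moun} supplies the $s$-independent endpoints $w_1:=u^0$, $w_2:=u^1$ in $S_{r,\al}$ (well-defined because $r>r_\al$) together with the mountain-pass inequality
\[
c_s=m_{r,s}(\al)\ge f(\ti t)>\max\bigl\{E_{r,s}(u^0),E_{r,s}(u^1)\bigr\}\qquad\text{for every } s\in[\tfrac12,1],
\]
which follows by combining parts (i) and (iii) of that lemma (one has $E_{r,s}(u^1)\le 0<f(\ti t)$ and $E_{r,s}(u^0)<f(\ti t)$).

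The remaining hypotheses of Theorem~\ref{criticalprinciple} are verified as follows. Because $\be\le 0$, we have $-\tfrac{\be}{p}\int|u|^p\ge 0$; together with the estimate $\int Vu^2\ge -\|V_-\|_{\frac{N}{2}}S^{-1}\|\nabla u\|_2^2$ from H\"older and Sobolev, assumption $(V_0)$ yields
\[
A(u)\ge \tfrac12\bigl(1-\|V_-\|_{\frac{N}{2}}S^{-1}\bigr)\|\nabla u\|_2^2,
\]
which is coercive on $H^1_0(\Om_r)$, delivering \eqref{2-8}. The $\tau$-H\"older continuity of $\Phi_s'$ and $\Phi_s''$ on bounded sets, uniformly in $s\in[\tfrac12,1]$, reduces to standard estimates for the $L^p$ and $L^q$ nonlinear terms (since $p,q\in(2,2^*)$) and for the quadratic $V$-term (since $V\in L^{N/2}\cap L^\infty$). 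Thus Theorem~\ref{criticalprinciple} produces, for almost every $s\in[\tfrac12,1]$, a bounded Palais-Smale sequence $\{u_n\}\subset S_{r,\al}$ at the level $m_{r,s}(\al)$.

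Extracting the Lagrange multipliers yields a bounded sequence $\la_n$ with
\[
-\De u_n+Vu_n+\la_nu_n=s|u_n|^{q-2}u_n+\be|u_n|^{p-2}u_n+o(1)\quad\text{in } H^{-1}(\Om_r).
\]
Since $\Om_r$ is bounded, $H^1_0(\Om_r)\hookrightarrow L^t(\Om_r)$ is compact for every $t\in[1,2^*)$. Passing to subsequences, $u_n\rightharpoonup u_{r,s}$ in $H^1_0(\Om_r)$, $u_n\to u_{r,s}$ in $L^t(\Om_r)$ for such $t$, $\la_n\to\la_{r,s}$, and the limit solves \eqref{meq1} with $\|u_{r,s}\|_2^2=\al$. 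Testing the approximate and limit equations against $u_n$ and $u_{r,s}$ respectively, and using the compact embeddings for the $L^2$-, $L^p$- and $L^q$-terms, gives $\|\nabla u_n\|_2\to\|\nabla u_{r,s}\|_2$, hence strong convergence in $H^1_0(\Om_r)$ and $E_{r,s}(u_{r,s})=m_{r,s}(\al)$. Nonnegativity is handled by carrying out the whole scheme with the truncated nonlinearity $(u_+)^{q-1}+\be(u_+)^{p-1}$: the MP geometry of Lemma~\ref{moun} is preserved because the explicit path constructed there is built from the positive eigenfunction $v_1$ and remains nonnegative, so any critical point of the truncated functional satisfies $-\De u_-+(V+\la_{r,s})u_-=0$ weakly, and testing against $u_-$ together with $(V_0)$ and the Poincar\'e inequality on the bounded domain $\Om_r$ forces $u_-\equiv 0$.

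The main obstacle I foresee is the careful bookkeeping required to make the truncation argument and the mountain-pass setup compatible on the constraint $S_{r,\al}$, so that the truncated functional actually attains the same mountain-pass level as $E_{r,s}$ and the corresponding critical point is genuinely nonnegative; the sign of the Lagrange multiplier is a priori unknown here, so one must use the coercivity in $(V_0)$ and the bounded-domain Poincar\'e constant to rule out a nontrivial negative part. Everything else---coercivity, the uniform-in-$s$ H\"older continuity of derivatives, the strict mountain-pass inequality, and the compactness of the Palais-Smale sequence---is immediate from Lemma~\ref{moun}, $(V_0)$, the sign condition $\be\le 0$, and the boundedness of $\Om_r$.
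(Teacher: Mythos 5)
Your application of Theorem~\ref{criticalprinciple} is exactly the paper's route: same splitting $A(u)-sB(u)$ (using $\be\le0$ so that $B\ge0$ and $A$ is coercive via $(V_0)$), endpoints $u^0,u^1$ and the strict mountain-pass inequality from Lemma~\ref{moun}, then a bounded Palais--Smale sequence, bounded Lagrange multipliers, and strong convergence from the compact embeddings on the bounded domain $\Om_r$ together with the convergence of the $V$-term; all of that matches and is fine.

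The gap is in the nonnegativity step. For a critical point $u$ of the truncated functional on $S_{r,\al}$, testing the Euler--Lagrange equation with $u_-$ gives
\begin{equation*}
\int_{\Om_r}|\nabla u_-|^2dx+\int_{\Om_r}(V+\la)\,u_-^2dx=0 ,
\end{equation*}
and with $(V_0)$ and the Poincar\'e inequality on $\Om_r$ this forces $u_-\equiv0$ only if $\la>-\left(1-\|V_-\|_{\frac N2}S^{-1}\right)\theta_1 r^{-2}$. Since the theorem is applied precisely for \emph{large} $r>r_\al$, the Poincar\'e constant $\theta_1 r^{-2}$ is tiny, and at this stage of the argument there is no lower bound on the multiplier $\la$ whatsoever --- its positivity is only established later (Lemma~\ref{multi}, and only for small $\al$ and in the limit $r\to\infty$). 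So the sentence ``testing against $u_-$ \dots forces $u_-\equiv0$'' does not go through, and the truncation scheme does not close; you correctly flagged this as the main obstacle, but it is not a bookkeeping issue, it is the missing ingredient. The paper avoids the sign of $\la$ altogether: it goes back inside the proof of Theorem~\ref{criticalprinciple} (Borthwick--Chang--Jeanjean--Soave), takes the almost optimal min-max paths $\gamma_n$ at parameters $s_n\uparrow s$ where $m'_{r,s}$ exists, and replaces them by $|\gamma_n|$; since $\|\nabla|u|\|_2\le\|\nabla u\|_2$ and all remaining terms of $E_{r,s}$ depend only on $|u|$, the modified paths still satisfy the two key properties (energy bound and gradient bound on high sublevels), so the construction yields a \emph{nonnegative} bounded Palais--Smale sequence and hence a nonnegative limit, with no information on $\la$ required. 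To repair your write-up, replace the truncation argument by this modification of the min-max construction (or otherwise supply an a priori lower bound on $\la$, which is not available here).
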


\begin{proof}
We apply Theorem \ref{criticalprinciple} to $E_{r,s},$ with $\Gamma_{r,\al}$ defined in Lemma \ref{moun},
$$A(u)=\frac{1}{2}\int_{\Om_r}|\nabla u|^2dx+\frac{1}{2}\int_{\Om_r}V(x) u^2dx-\frac{\be}{p}\int_{\Om_r}|u|^pdx\quad\text{and}\quad
   B(u)=\frac{1}{q}\int_{\Om_r}|u|^qdx.$$
Note that the assumptions in Theorem \ref{criticalprinciple} hold due to $\be\le0$ and Lemma \ref{moun}.
Hence, for almost every $s \in [\frac{1}{2},1],$
there exists a bounded  Palais-Smale sequence $\{u_n\}$:
$$E_{r,s}(u_n)\to m_{r,s}(\al) \quad\text{and}\quad E_{r,s}^\prime(u_n)|_{T_{u_n}S_{r,\al}}\to 0$$
where $T_{u_n}S_{r,\al}$ denotes the tangent space of $S_{r,\al}$ at $u_n$. Then
$$\la_n=-\frac{1}{\al}\left(\int_{\Om_r} |\nabla u_n|^2dx+\int_{\Om_r} Vu_n^2dx-\be \int_{\Om_r} | u_n|^pdx-s\int_{\Om_r} | u_n|^qdx\right),$$
is bounded and
\begin{equation}\label{2.8}
E_{r,s}^\prime(u_n)+\la_nu_n\to 0 \quad\text{in\;} H^{-1}(\Om_r).
\end{equation}
Moreover, there exist $u_0\in H_0^1(\Om_r)$ and  $\la\in\R$
such that up to a subsequence,
$$\la_n\to \la,\quad u_n\rightharpoonup u_0\quad \text{in } H_0^1(\Om_r)\quad\text{and}\quad
   u_n\to u_0\quad \text{in } L^t(\Om_r) \text{ for all }2\le t< 2^*,$$
and $u_0$ satisfies
\begin{equation*}
\begin{cases}
-\De u_0+Vu_0+\la u_0=s|u_0|^{q-2}u_0+\be |u_0|^{p-2}u_0\quad\text{in\ }\Om_r,\\
u_0\in H^1_0(\Om_r),\quad \int_{\Om_r}|u_0|^2dx=\al.
\end{cases}
\end{equation*}
In view of \eqref{2.8}, we have or $n\to\infty$:
$$E_{r,s}^\prime(u_n)u_0+\la_n\int_{\Om_r} u_n u_0dx\to 0 \quad\text{and}\quad
  E_{r,s}^\prime(u_n)u_u+\la_n \al\to 0.$$
Note that
$$\lim_{n\to\infty}\int_{\Om_r}V(x)u_n^2dx= \int_{\Om_r}V(x)u_0^2dx.$$
As a result we get $u_n\to u_0$ \text{in} $H_0^1(\Om_r)$, hence $E_{r,s}(u_{0})=m_{r,s}(\al)$. In order to obtain a nonnegative normalized solution
we only need to modify the proof of Theorem \ref{criticalprinciple} established in \cite{Borthwick-Chang-Jeanjean-Soave,Chang-Jeanjean-Soave}. In fact,
for almost every $s\in[\frac{1}{2},1]$, the derivative $m^\prime_{r,s}$ with respect to $s$ is well defined since the
function $s\mapsto m_{r,s}$ is nonincreasing, where $m_{r,s}$  denotes $m_{r,s}(\al)$  for fixed $\al$.
Let $s$ be such that $m^\prime_{r,s}$ exists and $\{s_n\}\subset[\frac{1}{2},1]$
be a monotone increasing sequence converging to s.
Similar to the proof of  Theorem \ref{criticalprinciple}  established in  \cite{Borthwick-Chang-Jeanjean-Soave,Chang-Jeanjean-Soave}, there
exist $\{\gamma_n\}\subset\Gamma_{r,\al}$ and $K=K(m^\prime_{r,s})$ such that:
\begin{itemize}
\item[(i)] if $E_{r,s}(\gamma_n(t))\ge m_{r,s}-(2-m_{r,s}^\prime)(s-s_n),$
 then $\int_{\Om_r}|\nabla\gamma_n(t)|^2dx\le K$.
 \item[(ii)] $\mathop{\max}\limits_{t\in[0,1]} E_{r,s}(\gamma_n(t))\le m_{r,s}-(2-m_{r,s}^\prime)(s-s_n).$
\end{itemize}
Setting $\ti{\gamma}_n(t)=|\gamma_n(t)|$ for any $t\in [0,1]$ we have $\{\ti{\gamma}_n\}\subset\Gamma_{r,\al}$.
Observe that $\|\nabla |u\||_2^2\le\|\nabla u\|_2^2$ for any $u\in H^1(\R^N)$. Now we have:
\begin{itemize}
\item [(I)]if $E_{r,s}(\ti\gamma_n(t))\ge m_{r,s}-(2-m_{r,s}^\prime)(s-s_n),$ then $E_{r,s}(\gamma_n(t))\ge m_{r,s}-(2-m_{r,s}^\prime)(s-s_n).$
By (i), there holds $\int_{\Om_r}|\nabla\gamma_n(t)|^2dx\le K$, and hence $\int_{\Om_r}|\nabla\ti\gamma_n(t)|^2dx\le K.$
Thus (i) also holds for $\ti\gamma_n$.
 \item[(II)] $\mathop{\max}\limits_{t\in[0,1]} E_{r,s}(\ti\gamma_n(t))\le\mathop{\max}\limits_{t\in[0,1]} E_{r,s}(\gamma_n(t))\le m_{r,s}-(2-m_{r,s}^\prime)(s-s_n).$
\end{itemize}
By replacing $\gamma_n$ with $\ti \gamma_n$ in the proof of Theorem \ref{criticalprinciple} established in \cite{Borthwick-Chang-Jeanjean-Soave,Chang-Jeanjean-Soave}, we obtain a nonnegative bounded Palais-Smale sequence $\{u_n\}$. Consequently, there exists a nonnegative normalized solution to \eqref{meq1} for almost every $s\in\left[\frac{1}{2},1\right]$ as above.  The proof is complete.
\end{proof}

In order to obtain a solution of \eqref{boundeq1} we first prove a uniform estimate for the solutions of \eqref{meq1} established in Theorem \ref{existmeq}.

\begin{lemma}\label{priori}
If $(\la, u)\in \R\times S_{r,\al}$ is a solution of \eqref{meq1} established in Theorem \ref{existmeq} for some $r$ and $s$,
then
$$\int_{\Om_r}|\nabla u|^2dx\le \frac{4N}{N(q-2)-4}\left(\frac{q-2}{2}T_\al+\al\left(\frac{1}{2N}\|\ti V\|_\infty+\frac{q-2}{4}\|V\|_\infty\right)\right),$$
where the constant $T_\al$ is defined in (iii) of Lemma \ref{moun} and is independent of $r$ and $s.$
\end{lemma}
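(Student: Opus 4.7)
The plan is to combine a Pohozaev-type identity on $\Om_r$ (which will produce the $\ti V$ term), the Nehari identity (multiplying the equation by $u$), and the energy bound $E_{r,s}(u)=m_{r,s}(\al)\le T_\al$ furnished by Lemma~\ref{moun}(iii). Convexity of $\Om$ together with $0\in\Om$ will be used precisely to make the boundary contribution in the Pohozaev identity have a favorable sign, and the signs $\be\le 0$, $p<q$ will be used to discard a negative term.

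First, I would multiply \eqref{meq1} by $x\cdot\nabla u$ and integrate over $\Om_r$. Standard integration by parts gives
\begin{equation*}
\frac{N-2}{2}\|\nabla u\|_2^2+\frac{1}{2}\int_{\pa\Om_r}(x\cdot\nu)|\nabla u|^2\,dS
+\frac{1}{2}\int_{\Om_r}\ti V u^2\,dx+\frac{N}{2}\int_{\Om_r}Vu^2\,dx+\frac{N\la\al}{2}
=\frac{Ns}{q}\|u\|_q^q+\frac{N\be}{p}\|u\|_p^p,
\end{equation*}
where $\nu$ is the outward unit normal and I use $u=0$ on $\pa\Om_r$ to write $\nabla u=(\pa_\nu u)\nu$. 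Since $\Om$ is convex with $0\in\Om$, the same holds for $\Om_r$, so $x\cdot\nu\ge 0$ on $\pa\Om_r$ and the boundary term $K:=\int_{\pa\Om_r}(x\cdot\nu)|\nabla u|^2\,dS$ is nonnegative. Multiplying \eqref{meq1} by $u$ and integrating gives the Nehari identity $\|\nabla u\|_2^2+\int_{\Om_r}Vu^2+\la\al=s\|u\|_q^q+\be\|u\|_p^p$. Subtracting $\frac{N}{2}$ times the Nehari identity from the Pohozaev identity eliminates $\la$, $\int V u^2$, and reshuffles the nonlinear terms into
\begin{equation*}
\|\nabla u\|_2^2=\frac{K}{2}+\frac{1}{2}\int_{\Om_r}\ti V u^2\,dx+\frac{Ns(q-2)}{2q}\|u\|_q^q+\frac{N\be(p-2)}{2p}\|u\|_p^p.
\end{equation*}

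Next, I would solve the energy identity $E_{r,s}(u)=\frac{1}{2}\|\nabla u\|_2^2+\frac{1}{2}\int Vu^2-\frac{s}{q}\|u\|_q^q-\frac{\be}{p}\|u\|_p^p$ for $\frac{s}{q}\|u\|_q^q$ and substitute into the above to eliminate $\|u\|_q^q$. Multiplying the energy relation by $2N(q-2)$, using the previous display, and collecting terms yields
\begin{equation*}
(N(q-2)-4)\|\nabla u\|_2^2=2N(q-2)\,E_{r,s}(u)-N(q-2)\!\int_{\Om_r}\!\!Vu^2\,dx-2K-2\!\int_{\Om_r}\!\ti V u^2\,dx+\frac{2N\be(q-p)}{p}\|u\|_p^p.
\end{equation*}
Now $\be\le 0$ and $q>p$ imply the last term is nonpositive, and $K\ge 0$, so both can be dropped, giving
\begin{equation*}
(N(q-2)-4)\|\nabla u\|_2^2\le 2N(q-2)T_\al+N(q-2)\al\|V\|_\infty+2\al\|\ti V\|_\infty,
\end{equation*}
where I used $E_{r,s}(u)=m_{r,s}(\al)\le T_\al$ together with the trivial bounds $|\int Vu^2|\le\al\|V\|_\infty$ and $|\int\ti Vu^2|\le\al\|\ti V\|_\infty$. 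Dividing by $N(q-2)-4>0$ (which holds since $q>2+4/N$) and rearranging gives exactly the claimed bound.

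The mildly delicate step is the Pohozaev computation on a bounded domain: I need the boundary term written in terms of $(x\cdot\nu)|\nabla u|^2$ so that convexity produces the correct sign. Once that is in place the rest is linear algebra on the three scalar identities (Pohozaev, Nehari, energy). I would also note that no assumption beyond $V\in C^1$ with $V,\ti V$ bounded is actually needed at this stage — the qualitative hypothesis $(V_0)$ enters only through the fact that the solution $(\la_{r,s},u_{r,s})$ with $E_{r,s}(u_{r,s})=m_{r,s}(\al)\le T_\al$ was produced in Theorem~\ref{existmeq}, and the conclusion is therefore uniform in $r>r_\al$ and $s\in[\tfrac12,1]$.
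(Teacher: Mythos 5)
Your proposal is correct and follows essentially the same route as the paper: combine the Pohozaev identity (with the boundary term $\int_{\partial\Om_r}(x\cdot\mathbf n)|\nabla u|^2\,d\sigma\ge 0$ by convexity), the Nehari identity, and the energy level $E_{r,s}(u)=m_{r,s}(\al)\le T_\al$ from Lemma~\ref{moun}(iii), using $\be\le 0$ and $p<q$ to discard the remaining nonlinear term. The only difference is organizational — you eliminate the $q$-term exactly via the energy identity and then drop the nonpositive $\be(q-p)\|u\|_p^p$ term, whereas the paper first applies the inequality $\frac{\be(p-2)}{2p}\ge\frac{\be(q-2)}{2p}$ — and both yield exactly the stated bound.
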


\begin{proof}
Since $u$ is a solution of \eqref{meq1}, there holds
\begin{equation}\label{2.4}
\begin{aligned}
\int_{\Om_r}|\nabla u|^2dx+\int_{ \Om_r}Vu^2dx=s\int_{\Om_r}|u|^qdx+\be\int_{ \Om_r}|u|^pdx-\la\int_{\Om_r}|u|^2dx.
\end{aligned}
\end{equation}
The Pohozaev identity implies 
\begin{equation*}
\begin{aligned}
&\frac{N-2}{2N}\int_{\Om_r}|\nabla u|^2dx+\frac{1}{2N}\int_{\partial \Om_r}|\nabla u|^2(x\cdot \mathbf{n})d\sigma+
\frac{1}{2N}\int_{\Om_r}(\ti V)u^2+\frac{1}{2}\int_{\Om_r}Vu^2dx\\
&\hspace{2cm}= -\frac{\la}{2}\int_{ \Om_r}|u|^2dx+\frac{s}{q}\int_{ \Om_r}|u|^qdx+\frac{\be}{p}\int_{ \Om_r}|u|^pdx,
\end{aligned}
\end{equation*}
where $\mathbf n$ denotes the outward unit normal vector on $\partial \Om_r.$ It then follows from $\be\le0$ that
\begin{align*}
&\frac{1}{N}\int_{\Om_r}|\nabla u|^2dx-\frac{1}{2N}\int_{\partial \Om_r}|\nabla u|^2(x\cdot \mathbf{n})d\sigma-
\frac{1}{2N}\int_{\Om_r}(\nabla V\cdot x)u^2dx\\
&\hspace{2cm}=\frac{(q-2)s}{2q}\int_{ \Om_r}|u|^qdx+\frac{\be(p-2)}{2p}\int_{ \Om_r}|u|^pdx\\
&\hspace{2cm}\ge\frac{q-2}{2}\left(\frac{s}{q}\int_{ \Om_r}|u|^qdx+\frac{\be}{p}\int_{ \Om_r}|u|^pdx\right)\\
&\hspace{2cm}=\frac{q-2}{2}\left(\frac{1}{2}\int_{\Om_r}|\nabla u|^2dx+\frac{1}{2}\int_{ \Om_r}Vu^2dx-m_{r,s}(\al)\right).
\end{align*}
Consequently, we have
\begin{align*}
\frac{q-2}{2}m_{r,s}(\al)
&\ge\frac{q-2}{2}\left(\frac{1}{2}\int_{\Om_r}|\nabla u|^2dx+\frac{1}{2}\int_{ \Om_r}Vu^2dx\right)\\
&\hspace{.5cm} -\left(\frac{1}{N}\int_{\Om_r}|\nabla u|^2dx-\frac{1}{2N}\int_{\partial \Om_r}|\nabla u|^2(x\cdot \mathbf{n})d\sigma-
\frac{1}{2N}\int_{\Om_r}(\nabla V\cdot x)u^2dx\right)\\
&\ge\frac{N(q-2)-4}{4N}\int_{\Om_r}|\nabla u|^2dx-\al\left(\frac{1}{2N}\|\nabla V\cdot x\|_\infty+\frac{q-2}{4}\|V\|_\infty\right),
\end{align*}
where the last inequality holds since $x\cdot \mathbf{n}(x)\ge 0$ for any $x\in\partial \Om_r$ due to the convexity of $\Om_r.$
By a direct calculation and Lemma \ref{moun} we get the inequality.
\end{proof}

Now,  we obtain a solution of \eqref{boundeq1} by letting $s\to 1$.

\begin{thm}\label{existbound}
For every $\al>0$ problem \eqref{boundeq1} has a solution $(\la_{r},u_{r})$ provided $r>r_\al$ where $r_\al$ is as in Lemma \ref{moun}. 
Moreover, $u_{r}\ge0$ in $\Om_r$.
\end{thm}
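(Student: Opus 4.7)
The strategy is to pass to the limit $s\nearrow 1$ in the family of solutions supplied by Theorem~\ref{existmeq}. Since Theorem~\ref{existmeq} yields nonnegative solutions $(\la_{r,s},u_{r,s})$ of \eqref{meq1} with $E_{r,s}(u_{r,s})=m_{r,s}(\al)$ for almost every $s\in[\frac{1}{2},1]$, I would first pick a sequence $s_n\nearrow 1$ of such admissible parameters.

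Next I would establish uniform bounds in $n$. Lemma~\ref{priori} provides $\|\nabla u_{r,s_n}\|_2^2\le C$ with $C$ independent of $s_n$, since $T_\al$, $\|V\|_\infty$ and $\|\ti V\|_\infty$ do not depend on $s$; combined with $\|u_{r,s_n}\|_2^2=\al$ this bounds $\{u_{r,s_n}\}$ in $H^1_0(\Om_r)$. Testing \eqref{meq1} against $u_{r,s_n}$,
$$\la_{r,s_n}\al=s_n\|u_{r,s_n}\|_q^q+\be\|u_{r,s_n}\|_p^p-\|\nabla u_{r,s_n}\|_2^2-\int_{\Om_r}Vu_{r,s_n}^2\,dx,$$
and the Gagliardo--Nirenberg inequality together with $(V_0)$ bounds the right-hand side uniformly, so $\{\la_{r,s_n}\}$ is bounded in $\R$.

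After extracting a subsequence, $\la_{r,s_n}\to\la_r$ in $\R$ and $u_{r,s_n}\rightharpoonup u_r$ weakly in $H^1_0(\Om_r)$. Since $\Om_r$ is bounded, the Rellich--Kondrachov theorem gives strong convergence in every $L^t(\Om_r)$ with $2\le t<2^*$, in particular in $L^2$, $L^p$ and $L^q$. Hence $u_r\ge 0$ almost everywhere, $\|u_r\|_2^2=\al$, so $u_r\in S_{r,\al}$ and $u_r\not\equiv 0$. Passing to the limit in the weak formulation of \eqref{meq1} is then routine: the potential term converges by dominated convergence (since $V\in L^\infty$), and the nonlinear terms by the strong $L^p$ and $L^q$ convergence. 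Therefore $(\la_r,u_r)$ solves \eqref{boundeq1} weakly, and standard elliptic regularity upgrades this to a classical solution.

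I expect no serious obstacle. The two ingredients that could in principle fail --- a uniform $H^1_0$ bound on $u_{r,s_n}$ and a uniform bound on $\la_{r,s_n}$ --- are supplied directly by Lemmas~\ref{moun} and~\ref{priori} together with the equation, while the compactness of the Sobolev embedding on the bounded domain $\Om_r$ trivializes the passage to the limit in all nonlinear and potential terms. Non-triviality of $u_r$ is automatic from $\|u_r\|_2^2=\al>0$, so no further lower bound on the mountain pass level is needed at this stage.
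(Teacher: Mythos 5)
Your proposal is correct and takes essentially the same route as the paper: the paper also selects $s\to1$ along the almost-every set of admissible parameters, uses Lemma~\ref{priori} for the uniform $H^1_0(\Om_r)$ bound, bounds the multipliers via the equation, and passes to the limit ``as in Theorem~\ref{existmeq}'' using the compactness of the embedding on the bounded domain. The only cosmetic difference is that the paper records strong $H^1_0$ convergence of $u_{r,s}$ (convenient later for keeping the energy level $m_{r,1}(\al)$), while your weak convergence plus Rellich--Kondrachov argument already suffices for the existence and nonnegativity asserted in Theorem~\ref{existbound}.
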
 

\begin{proof} It follows from Theorem \ref{existmeq} that there is a nonnegative solution $(\la_{r,s},u_{r,s})$ to \eqref{meq1}
for almost every $s\in \left[\frac{1}{2},1\right]$. In view of Lemma \ref{priori}, $\{u_{r,s}\}$ is bounded. By an argument similar to that in Theorem~\ref{existmeq}, there exist $u_r\in S_{r,\al}$ and $\la_r$ such that up to a subsequence,
 $$\la_{r,s}\to\la_r \quad\text{and}\quad u_{r,s}\to u_r \quad\text{in}\ H_0^1(\Om_r)\quad \text{as}\ s\to 1.$$
Hence $u_r$ is a nonnegative solution of problem \eqref{boundeq1}.
\end{proof}

Before analyzing the Lagrange multiplier, we first establish an a priori estimate for the solutions of \eqref{boundeq1}.

\begin{lemma}\label{blow}
If $\{(\la_r, u_r)\}$ is a family of nonnegative solutions of \eqref{boundeq1} such that $\|u_r\|_{H^1}\le C$ with $C>0$ independent of $r$, 
then $\limsup\limits_{r\to\infty}\|u_r\|_\infty<\infty$  is bounded.
\end{lemma}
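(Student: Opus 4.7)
The plan is to combine a uniform bound on the Lagrange multipliers with a blow-up/rescaling argument, obtaining a contradiction with a Liouville theorem in the subcritical range.

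First I would derive a uniform bound on $\la_r$. Testing \eqref{boundeq1} with $u_r$ and using $\|u_r\|_{L^2}^2=\al$, I get
\[
\la_r\al = -\int_{\Om_r}|\nabla u_r|^2\,dx - \int_{\Om_r}Vu_r^2\,dx + \int_{\Om_r}u_r^q\,dx + \be\int_{\Om_r}u_r^p\,dx.
\]
Since $\|u_r\|_{H^1}\le C$, $V\in L^\infty(\R^N)$ by $(V_0)$, and $2<p,q<2^*$ so that the Sobolev embedding bounds $\|u_r\|_p$ and $\|u_r\|_q$, each term on the right is bounded uniformly in $r$. Hence $\sup_r|\la_r|<\infty$.

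Next I argue by contradiction. Suppose that, along a subsequence, $M_r:=\|u_r\|_\infty\to\infty$ with $u_r(x_r)=M_r$ and $x_r\in\Om_r$. Set $\mu_r:=M_r^{(q-2)/2}\to\infty$ and define the rescaling
\[
v_r(y) := M_r^{-1}u_r\!\left(x_r+\mu_r^{-1}y\right) \quad\text{on}\quad D_r := \mu_r(\Om_r-x_r).
\]
A direct computation (using $\mu_r^2 M_r=M_r^{q-1}$) shows
\[
-\De v_r+M_r^{-(q-2)}\big[V(x_r+\mu_r^{-1}y)+\la_r\big]v_r = v_r^{q-1}+\be M_r^{p-q}v_r^{p-1} \quad\text{in } D_r,
\]
with $0\le v_r\le 1$, $v_r(0)=1$, and $v_r=0$ on $\pa D_r$. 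The linear coefficient tends to $0$ uniformly since $V$ and $\la_r$ are bounded and $q>2$, and the coefficient $\be M_r^{p-q}\to 0$ since $p<q$. Interior $L^s$ and Schauder estimates together with up-to-the-boundary $C^{1,\al}$ estimates on $\Om_r$ (whose boundary $r\pa\Om$ has curvature $O(1/r)$) give $\{v_r\}$ uniformly bounded in $C^{1,\al}_{\mathrm{loc}}(\overline{D_r})$, so a subsequence converges locally uniformly to a limit $v$ on a limit domain $D_\infty$.

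Finally, distinguish two cases according to the behavior of $d_r:=\mu_r\,\operatorname{dist}(x_r,\pa\Om_r)$. If $d_r\to\infty$, then $D_\infty=\R^N$ and $v$ is a bounded nonnegative $C^2$ solution of $-\De v=v^{q-1}$ on $\R^N$ with $v(0)=1$, contradicting the Gidas-Spruck Liouville theorem since $q<2^*$ gives $q-1<(N+2)/(N-2)$. If $d_r$ stays bounded, then by convexity and smoothness of $\Om$ the rescaled boundary $\pa D_r$ (whose curvature scales as $O((\mu_r r)^{-1})\to 0$) flattens to a hyperplane, so $D_\infty$ is a half-space and $v$ satisfies $-\De v=v^{q-1}$ in $D_\infty$, $v=0$ on $\pa D_\infty$, $v(0)=1$; this contradicts the half-space Liouville theorem in the subcritical range. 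Either case forces $\limsup_{r\to\infty}\|u_r\|_\infty<\infty$. The main obstacle I anticipate is the boundary case: showing that $\pa D_r$ converges to a hyperplane in $C^{1,\al}_{\mathrm{loc}}$ and that $v_r$ enjoys uniform up-to-the-boundary regularity, both of which hinge on the convexity and smoothness of $\Om$, and invoking the half-space Liouville theorem, which is standard but nontrivial.
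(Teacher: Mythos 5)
Your proposal is correct and follows the same blow-up skeleton as the paper's proof: a uniform bound on $\la_r$ obtained by testing \eqref{boundeq1} with $u_r$, the rescaling $v_r(y)=M_r^{-1}u_r(x_r+\tau_r y)$ with $\tau_r=M_r^{(2-q)/2}$ (your $\mu_r^{-1}$), passage to a bounded nonnegative limit $v$ of $-\De v=v^{q-1}$ with $v(0)=1$ on either $\R^N$ or a half-space, and a contradiction with subcritical Liouville theorems. The only genuine divergence is how the near-boundary case is handled, and this is precisely the heart of the paper's argument. You invoke uniform up-to-the-boundary $C^{1,\al}$ (or $W^{2,s}$) estimates for $v_r$ on the rescaled domains $D_r$, justified by the fact that $\pa D_r$ is a dilation of the fixed smooth convex hypersurface $\pa\Om$ by the factor $r\mu_r\to\infty$, hence is, on bounded sets, a single graph with $C^2$ norm $O((r\mu_r)^{-1})$; with such uniform estimates the sub-case $d_r=\mu_r\,\mathrm{dist}(x_r,\pa\Om_r)\to0$ is excluded directly ($v_r(0)=1$, $v_r=0$ on $\pa D_r$, uniform Lipschitz bound), and the remaining cases lead to the half-space or whole-space Liouville theorem. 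The paper avoids uniform boundary elliptic theory on the varying domains altogether: it proves exactly the needed Lipschitz bound at the nearest boundary point by an explicit barrier, namely a large multiple of the Kelvin transform of the torsion function of an exterior tangent ball plus a cutoff, compared with $v_r$ via the maximum principle, which yields its claim \eqref{2-17} that $\liminf_{r\to\infty}d_r>0$. Both routes work; yours trades the hand-made barrier for the assertion that the boundary estimates are uniform over the family $\{D_r\}$, which is true but is exactly where the convexity, smoothness and curvature decay must be used in detail (as you flagged), whereas the paper's barrier is self-contained and only needs the exterior ball at $p_r$. One small point: for the whole-space case the correct reference is the Gidas--Spruck theorem, as you say; the Esteban--Lions result cited in the paper is the half-space ingredient.
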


\begin{proof}
It follows from the regularity theory of elliptic partial differential equations that $u_r\in C(\Om_r)$.
Assume to the contrary that there exist a sequence, for simplicity denoted by $\{u_r\}$, and $x_r\in \Om_r$ such that
$$M_r:=\max_{x\in \Om_r}u_r(x)=u_r(x_r)\to \infty\quad \text{as}\ r\to \infty.$$
Suppose without loss of generality that, up to a subsequence, $\mathop{\lim}\limits_{r\to\infty}\frac{x_r}{|x_r|}=(1,0,\dots,0)$. Set
$$v_r(x)=\frac{u_r(x_r+\tau_rx)}{M_r}\quad \text{for } x\in\Sigma^r:=\{x\in\R^N:x_r+\tau_rx\in \Om_r\},$$
where $\tau_r=M_r^{\frac{2-q}{2}}.$ Then $\tau_r\to0$, $\|v_r\|_{L^\infty(\Sigma^r)}\le1$, and $v_r$ satisfies
\begin{equation}\label{2.27}
-\De v_r+\tau_r^2V(x_r+\tau_rx)v_r+\tau_r^2\la_rv_r=|v_r|^{q-2}v_r+\be\tau_r^{\frac{2(q-p)}{q-2}}|v_r|^{p-2}v_r\quad \text{in\ }\Sigma^r.
\end{equation}
In view of \eqref{boundeq1}, the   Gagliardo-Nirenberg inequality and $\|u_r\|_{H^1}\le C$ with $C$ independent of $r$, we infer that the sequence $\{\la_r\}$ is bounded. It then follows from the regularity theory of elliptic partial differential equations and the Arzela-Ascoli theorem that there exists $v$ such that up to a subsequence
$$v_r\rightharpoonup v\quad\text{in}\; H^1_0(\Sigma) \quad\text{and}\quad
  v_r\to v\quad\text{in}\; C^\be_{loc}(\Sigma)\ \text{for some }\be\in (0,1), $$
where $\Sigma:=\mathop{\lim}\limits_{r\to\infty} \Sigma^r$.

We  claim that
\begin{equation}\label{2-17}
\liminf_{r\to\infty}\frac{{\rm dist}(x_r,\partial \Om_r)}{\tau_r}=\liminf_{r\to\infty}\frac{|y_r-x_r|}{\tau_r}\ge d>0,
\end{equation}
where $y_r\in \partial \Om_r$ is such that ${\rm dist}(x_r,\partial \Om_r)=|y_r-x_r|$ for any large $r$. If this is not true, let
$$p_r=\frac{y_r-x_r}{\tau_r}\in\partial\Sigma^r,$$
so that $p_r\to 0$, $v_r(p_r)=0$ and $v_r(0)=1.$ If  $v_r$ is uniformly (in $r$) locally Lipschitz continuous at $p_r$ we obtain a contradiction because
$$|v_r(p_r)-v_r(0)|\le L|p_r-0|\to 0$$
contradicts $v_r(p_r)=0$ and $v_r(0)=1$. In order to obtain the uniformly locally Lipschitz continuity of $v_r$ at $p_r$ we first translate $p_r$ to the
point $(-1,0,\cdots,0)$ such that the unit ball $B_1$ is exterior tangent to $\partial \Sigma^r$ at $(-1,0,\cdots,0)$.
Let $\phi_1(x)=\frac{1}{2N}(1-|x|^2)$ for any $x\in B_1$, then $$-\De \phi_1=1\quad \text{in}\; B_1.$$
Let $\phi_2$ be the Kelvin transformation of $\phi_1,$ that is,
$$\phi_2(x)=\frac{1}{|x|^{N-2}}\phi_1\left(\frac{x}{|x|^2}\right)\quad \text{for } x\in B_1^c,$$
so that
$$-\De \phi_2=\frac{1}{|x|^{N+2}}\quad \text{in}\; B_1^c.$$
Let $\phi\in C_c^\infty(\R^N)$ satisfy $0\le\phi\le 1$ in $\R^N$,
$\phi=0$ in $\overline {B_1}$, $\phi(x)>0$ in $ B_3\setminus\overline{ B_1}$, and $\phi(x)=1$ for $x\in \partial B_3$.
Then there exists $C>0$ such that
$$-\De \phi \ge -C \quad \text{in}\; B_3\setminus\overline{ B_1}.$$
Define $h(x)=t\phi_2(x)+\phi(x)$, where  $t>0$ is to be determined. Note that $h$ satisfies
$$-\De h=\frac{t}{|x|^{N+2}}-C\ge \frac{t}{3^{N+2}}-C\quad \text{in}\; B_3\setminus \overline{ B_1}, $$
$$h(x)=\frac{t}{2N|x|^{N-2}}\left(1-\frac{1}{|x|^2}\right)+\phi(x)\quad\text{for } x\in \partial \Sigma^r,$$
and
$$h(x)=\frac{4t}{N3^N}+1\quad\text{for } x\in \partial B_3.$$
Hence we can choose $t$ large  such that for any  $r$ large enough:
\begin{equation}
\begin{cases}
-\De (h-v_r)\ge 0\quad &\text{in}\ \Sigma^r\cap B_3,\\
h-v_r\ge 0&\text{on}\ \partial(\Sigma^r\cap B_3).
\end{cases}
\end{equation}
The maximum principle then implies that
$$h\ge v_r\quad \text{in}\ \Sigma^r\cap B_3.$$
This yields
$$0\le v_r(x)-v_r(p_r)=v_r(x)\le h(x)=h(x)-h(p_r)\quad \text{in}\ \Sigma^r\cap B_3.$$
Consequently,  we only need to prove the  locally Lipschitz continuity of $\phi_2$ at $p_r$. Note that
\begin{equation}
\begin{aligned}
\phi_2(x)-\phi_2(p_r)&=\frac{1}{2N|x|^{N-2}}\left(1-\frac{1}{|x|^2}\right)
 =\frac{1}{2N|x|^N}(|x|+1)(|x|-1)\\
&\le L|x-p_r|
\end{aligned}
\end{equation}
for $x\in B(p_r,\frac{1}{4})\cap B_1^c$. Here the last inequality holds since we have translated $p_r$ to $(-1,0,\cdots,0).$
Therefore the claim \eqref{2-17} holds. As a result, by letting $r\to\infty$ in \eqref{2.27}, we obtain that  $v\in H_0^1(\Sigma)$ is a nonnegative solution of
\begin{equation}
-\De v=|v|^{q-2}v\quad \text{in } \Sigma
\end{equation}
where 
\[
  \Si = \begin{cases} 
             \R^N &\text{ if $\liminf_{r\to\infty}\frac{{\rm dist}(x_r,\partial \Om_r)}{\tau_r}=\infty$}\\
             \{x\in \R^N: x_1>-d\} &\text{ if $\liminf_{r\to\infty}\frac{{\rm dist}(x_r,\partial \Om_r)}{\tau_r}>0$.}
           \end{cases}
\]
It then follows from the Liouville theorems (see e.g.\ \cite{Esteban-Lions}) that $v=0$ in $H$, which contradicts
$v(0)=\mathop{\lim}\limits_{r\to\infty}v_r(0)=1$.
\end{proof}

\begin{remark}\label{2.26}\rm
Note that the proof of Lemma \ref{blow} does not depend on $\be.$
\end{remark}

\begin{lemma}\label{multi} Let $(\la_{r,\al},u_{r,\al})$ be the solution of \eqref{boundeq1} from Theorem~\ref{existbound}. If $\|\ti V_+\|_{\frac{N}{2}}\le 2S$  then
there exists  $\bar\al>0$  such that
$$\mathop{\liminf}\limits_{r\to\infty}\la_{r,\al}>0\quad
\text{for } 0<\al<\bar\al.$$
\end{lemma}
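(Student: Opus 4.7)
The plan is to combine the Pohozaev identity (as derived in the proof of Lemma~\ref{priori}) with the definition of the energy $E_r$ to obtain the master formula
\[
  N\la_{r,\al}\al = 2\|\nabla u_{r,\al}\|_2^2 - \int_{\pa\Om_r}|\nabla u_{r,\al}|^2 (x\cdot \mathbf{n})\,d\si - \int_{\Om_r}\ti V u_{r,\al}^2\,dx - 2N E_r(u_{r,\al}).
\]
This follows by multiplying the Pohozaev identity by $2$ and using $2E_r(u_{r,\al})$ to absorb the nonlinear terms $\frac{2}{q}\|u_{r,\al}\|_q^q+\frac{2\be}{p}\|u_{r,\al}\|_p^p$ appearing on its right-hand side. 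The crucial feature is that the coefficient of $\|\nabla u\|_2^2$ is exactly twice the coefficient of $\int\ti V u^2$, so H\"older and Sobolev yield
\[
  2\|\nabla u\|_2^2 - \int\ti V u^2 \geq \bigl(2-\|\ti V_+\|_{\frac{N}{2}}S^{-1}\bigr)\|\nabla u\|_2^2 \geq 0
\]
precisely under the hypothesis $\|\ti V_+\|_{\frac{N}{2}}\le 2S$.

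Because the boundary term $BT := \int_{\pa\Om_r}|\nabla u|^2(x\cdot\mathbf{n})\,d\si$ is nonnegative by convexity of $\Om$, it enters the identity with the wrong sign for a direct lower bound on $\la_{r,\al}$ at fixed $r$. I would therefore argue by contradiction, passing to the limit $r\to\infty$ where the boundary term vanishes. Suppose there exist $\al\in(0,\bar\al)$ and a sequence $r_n\to\infty$ with $\la_n := \la_{r_n,\al}\to\la_*\le 0$. By Lemma~\ref{priori} and Lemma~\ref{blow}, $u_n := u_{r_n,\al}$ is bounded in both $H^1$ and $L^\infty$; extending by zero and extracting a subsequence, $u_n\rightharpoonup u^*$ in $H^1(\R^N)$, and $u^*$ solves
\[
  -\De u^* + Vu^* + \la_* u^* = |u^*|^{q-2}u^* + \be|u^*|^{p-2}u^*\quad\text{in }\R^N
\]
with $\|u^*\|_2^2\le\al$. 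The whole-space analogue of the master identity (no boundary term) then gives
\[
  N\la_*\|u^*\|_2^2 \geq \bigl(2-\|\ti V_+\|_{\frac{N}{2}}S^{-1}\bigr)\|\nabla u^*\|_2^2 - 2NE_V(u^*).
\]

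To close the argument I would exclude $u^*\equiv 0$ by a concentration-compactness dichotomy: mass concentrating at infinity would produce a nontrivial $H^1(\R^N)$ solution of $-\De w + \la_* w = |w|^{q-2}w + \be|w|^{p-2}w$, which is impossible when $\la_*\le 0$, $\be\le 0$ and $q<2^*$, while pure vanishing is incompatible with the positive mountain pass lower bound $L_\al$ on $E_{r_n}(u_n)$. Strong $L^p$- and $L^q$-convergence then force $E_V(u^*)=\lim E_{r_n}(u_n)\in[L_\al,T_\al]$; combining $E_V(u^*)\ge L_\al$ with the elementary estimate
\[
  E_V(u^*) \le \tfrac12\|\nabla u^*\|_2^2 + \tfrac12\|V\|_\infty\al + \tfrac{|\be|}{p}C_{N,p}\al^{(2p-N(p-2))/4}\|\nabla u^*\|_2^{N(p-2)/2}
\]
and using $N(p-2)/2<2$ yields $\|\nabla u^*\|_2^2\ge 2L_\al(1-o_\al(1))$ as $\al\to 0$. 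Inserting this into the displayed inequality, the explicit scaling formulas from Lemma~\ref{moun}(iii) show that for $\bar\al>0$ small enough the right-hand side is strictly positive, contradicting $\la_*\le 0$. The main obstacle is the unfavorable sign of $BT$, which forces the passage to the whole-space limit and hence the concentration-compactness analysis needed to ensure $u^*\ne 0$ captures all the mass.
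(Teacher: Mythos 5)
Your master identity and the way you use the hypothesis $\|\ti V_+\|_{\frac N2}\le 2S$ (via H\"older and Sobolev on the $\int\ti V u^2$ term) are correct and close in spirit to the paper's starting point, but both of your closing steps have genuine gaps. First, vanishing is \emph{not} excluded by the positivity of the mountain-pass level. If $\sup_{z}\int_{B(z,1)}u_n^2\to0$, then $\int|u_n|^q$, $\int|u_n|^p$ and $\int Vu_n^2$ all tend to $0$, so $E_{r_n}(u_n)=\tfrac12\|\nabla u_n\|_2^2+o(1)\ge L_\al$ is perfectly consistent (the energy simply sits in the gradient); moreover the equation then gives $\la_n\al=-\|\nabla u_n\|_2^2+o(1)\le-2L_\al+o(1)<0$, which is exactly the situation you are trying to rule out, so no contradiction is reached. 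The paper needs a genuinely different ingredient at this point: it proves (only for small $\al$) that $\liminf_{r\to\infty}\max_{\Om_r}u_{r,\al}>0$, by showing that vanishing of the sup-norm together with $m_{r,1}(\al)\to\infty$ as $\al\to0$ would force $\la_{r,\al}\to-\infty$, while testing the equation against the positive principal Dirichlet eigenfunction of $-\De$ on $\Om_r$ yields the lower bound $\la_{r,\al}\ge-2\|V\|_\infty-2\theta_1r^{-2}$; some substitute for this eigenfunction comparison is missing from your sketch. (Also, your dismissal of the concentration alternative is not standard when $\la_*=0$ and $\be<0$; the paper does not argue by impossibility there, but uses the nontrivial profile $w$ constructively.)

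Second, the final quantitative step cannot work as written. From $E_V(u^*)\le T_\al$ and $\|\nabla u^*\|_2^2\ge2L_\al(1-o_\al(1))$, positivity of $(2-\|\ti V_+\|_{\frac N2}S^{-1})\|\nabla u^*\|_2^2-2NE_V(u^*)$ would require $2NT_\al<2\,(2-\|\ti V_+\|_{\frac N2}S^{-1})L_\al\le4L_\al$, i.e. $T_\al<\tfrac2NL_\al<L_\al$, which is impossible since $T_\al\ge m_{r,1}(\al)\ge L_\al$ by Lemma~\ref{moun}(iii); and since $L_\al$ and $T_\al$ blow up with the same power $\al^{-\frac{2N-q(N-2)}{N(q-2)-4}}$, taking $\bar\al$ small does not alter this constant comparison. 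The paper's positivity mechanism is different: it uses the combination of the equation and the Pohozaev identity on $\R^N$ in which the mountain-pass level does not appear, namely
\begin{equation*}
\frac{(2-q)\la_\al}{2q}\int_{\R^N}u_\al^2\,dx=\frac{(N-2)q-2N}{2Nq}\int_{\R^N}|\nabla u_\al|^2dx+\frac1{2N}\int_{\R^N}\ti V u_\al^2dx+\frac{q-2}{2q}\int_{\R^N}Vu_\al^2dx-\frac{(q-p)\be}{pq}\int_{\R^N}|u_\al|^pdx,
\end{equation*}
where the gradient carries the negative coefficient $\frac{(N-2)q-2N}{2Nq}$, combined with the separate lower bound \eqref{2.7}, $\int|\nabla u_\al|^2dx\ge c\,\al^{-\frac{2N-q(N-2)}{N(q-2)-4}}$, which is where $\|\ti V_+\|_{\frac N2}\le 2S$ enters (via Pohozaev and Gagliardo--Nirenberg) and where the nontriviality of $u_\al$, or of a translated profile when $u_\al=0$, is indispensable. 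Then the right-hand side tends to $-\infty$ as $\al\to0$ (using $\|V\|_\infty$, $\|\ti V\|_\infty$ and the sublinear power $N(p-2)/4<1$ on the $\be$-term), forcing $\la_\al>0$ for small $\al$. Your argument needs to be replaced by, or supplemented with, a step of this kind.
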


\begin{proof}
Let $(\la_{r,\al},u_{r,\al})$ be the solution of \eqref{boundeq1} established in Theorem~\ref{existbound}. The regularity theory of elliptic partial differential equations yields $u_{r,\al}\in C(\Om_r)$. In view of Lemma \ref{blow}, there holds
\begin{equation*}
\mathop{\limsup}\limits_{r\to\infty} \max_{\Om_r} u_{r,\al}<\infty.
\end{equation*}
Setting $$g(\al)=\liminf_{r\to\infty} \max_{\Om_r} u_{r,\al}$$
we claim that there is $\al_1>0$ such that $g(\al)>0$ for any $0<\al<\al_1$. Assume to the contrary that there exists a sequence $\{\al_k\}$ tending to $0$ as $k\to\infty$ such that $g(\al_k)=0$ for any $k$, that is,
\begin{equation}\label{2-4}
\liminf_{r\to\infty}\max_{\Om_r} u_{r,\al_k} = 0\quad \text{for\ any }k.
\end{equation}
As a consequence of (iii) in Lemma \ref{moun},
\begin{equation}\label{2-15}
E_{r}(u_{r,\al_k})=m_{r,1}(\al_k)\to \infty \quad \text{as\ } k\to0 \quad\text{for\ any\ }r>r_{\al_k}.
\end{equation}
For any given $k$, it follows from \eqref{2-4} and $u_{r,\al_k}\in S_{r,\al_k}$ that up to a subsequence,
\begin{equation}\label{2.19-1}
\int_{\Om_r}|u_{r,\al_k}|^sdx\le |\max_{\Om_{r}}u_{r,\al_k}|^{s-2}\al_k\to 0\quad\text{as}\ r\to\infty
\end{equation}
for any $s>2$. Hence, for any given large $k$, there exists $ \bar r_k> r_{\al_k}$ such that
$$\left|\frac{1}{q}\int_{ \Om_r}|u_{r,\al_k}|^qdx+\frac{\be}{p}\int_{\Om_r}|u_{r,\al_k}|^pdx\right|<\frac{m_{r,1}(\al_k)}{2}\quad \text{for\ any\ } r\ge\bar r_k.$$
In view of \eqref{2-15} and $E_{r}(u_{r,\al_k})=m_{r,1}(\al_k)$, we further have
\begin{equation}\label{2-5}
\int_{\Om_r}|\nabla u_{r,\al_k}|^2dx+\int_{\Om_r}V u_{r,\al_k}^2dx\ge \frac{m_{r,1}(\al_k)}{2}
\quad \text{for\ any\ large\ } k \text{\ and\ }r\ge \bar r_k.
\end{equation}
It follows from \eqref{2-4}, \eqref{2.19-1} and \eqref{2-5} that there exists $r_k\ge\bar r_k$ with $r_k\to \infty$ as $k\to \infty $ such that
\begin{equation}\label{2-1}
0=\lim_{k\to\infty}\max_{\Om_{r_k}}u_{r_k,\al_k} = 0,
\end{equation}
\begin{equation}\label{2.19-2}
\int_{\Om_{r_k}}|u_{r_k,\al_k}|^sdx\le |\max_{ \Om_{r_k}}u_{r_k,\al_k}|^{s-2}\al_k\to 0\quad\text{as}\ k\to\infty \text{\ for\ any\ } s>2,
\end{equation}
and
\begin{equation}\label{2-2}
\int_{\Om_{r_k}}|\nabla u_{r_k,\al_k}|^2dx+\int_{\Om_r}V u_{r_k,\al_k}^2dx\to\infty \quad\text{as}\ k\to\infty.
\end{equation}
By \eqref{boundeq1}, \eqref{2.19-2} and \eqref{2-2}, we have
\begin{equation}\label{2.21}
\la_{r_k,\al_k}\to-\infty \quad \text{as}\ k \to \infty.
\end{equation}
Now \eqref{boundeq1} implies
\begin{equation*}
-\De u_{r_k,\al_k}+\left(\|V\|_{\infty}+\frac{\la_{r_k,\al_k}}{2}\right)u_{r_k,\al_k}
\ge\left(-\frac{\la_{r_k,\al_k}}{2}+|u_{r_k,\al_k}|^{q-2}+\be |u_{r_k,\al_k}|^{p-2}\right)u_{r_k,\al_k}.
\end{equation*}
In view of \eqref{2.21}  and \eqref{2-1}, we further have
$$-\De u_{r_k,\al_k}+\left(\|V\|_{\infty}+\frac{\la_{r_k,\al_k}}{2}\right)u_{r_k,\al_k}\ge 0$$ for large $k.$
Let $\theta_{r_k}$ be the principal eigenvalue of $-\De$  with Dirichlet boundary condition in $\Om_{r_k}$,
and $v_{r_k}>0$ be the corresponding normalized eigenfunction. Then $\theta_{r_k}=\frac{\theta_1}{r_k^2}$, and
$$\left(\frac{\theta_1}{r_k^2}+\|V\|_{\infty}+\frac{\la_{r_k,\al_k}}{2}\right)\int_{\Om_{r_k}}{u_{r_k,\al_k}v_{r_k}}dx\ge0.$$
Since  $\int_{\Om_{r_k}}{u_{r_k,\al_k}v_{r_k}}dx>0$, we have
$$\frac{\theta_1}{r_k^2}+\|V\|_{\infty}+\frac{\la_{r_k,\al_k}}{2}\ge 0,$$
which contradicts \eqref{2.21} for large $k.$
Hence the claim holds, that is, there exists $\al_1>0$ such that
\begin{equation}\label{2.43}
  g(\al)=\mathop{\liminf}\limits_{r\to\infty} \max_{\Om_r} u_{r,\al}>0
\end{equation}
for any $0<\al<\al_1$.

We consider $H^1(\Om_r)$ as a subspace of $H^1(\R^N)$ for any $r>0$. It follows from Lemma \ref{priori} that the set of solutions $\{u_{r,\al}:r>r_\al\}$ established in Theorem \ref{existbound} is bounded in $H^1(\R^N)$, so there exist $u_\al\in H^1(\R^N)$ and $\la_\al\in\R$ such that up to a subsequence:
$$\la_{r,\al}\to\la_\al,$$
$$u_{r,\al}\rightharpoonup u_\al\quad\text{in}\ H^1(\R^N),$$
$$u_{r,\al}\to u_\al\quad\text{in}\ L^k_{loc}(\R^N)\ \text{for all }2\le k< 2^*,$$
$$u_{r,\al}\to u_\al\quad\text{a.e.\;in}\ \R^N,$$
and $u_\al$ is a solution of the equation
\begin{equation}\label{2.17}
-\De u+Vu+\la_\al u=|u|^{q-2}u+\be|u|^{p-2}u\quad \text{in\ } \R^N.
\end{equation}
Therefore
\begin{equation}\label{2.5}
\int_{\R^N}|\nabla u_\al|^2dx+\int_{\R^N}Vu_\al^2dx+\la_\al\int_{\R^N}u_\al^2dx=\int_{\R^N}|u_\al|^qdx+\be\int_{\R^N}|u_\al|^pdx
\end{equation}
and the Pohozaev identity gives
\begin{equation}\label{2.6}
\begin{aligned}
&\frac{N-2}{2N}\int_{\R^N}|\nabla u_\al|^2dx+
 \frac{1}{2N}\int_{\R^N}\ti Vu_\al^2+\frac{1}{2}\int_{\R^N}Vu_\al^2dx+\frac{\la_\al}{2}\int_{\R^N}u_\al^2dx\\
 &\hspace{1cm} = \frac{1}{q}\int_{\R^N}|u_\al|^qdx+\frac{\be}{p}\int_{\R^N}|u_\al|^pdx.
\end{aligned}
\end{equation}
It follows from \eqref{2.5},  \eqref{2.6}, the H\"older inequality, the Gagliardo-Nirenberg inequality and $\be<0$ that
\begin{equation}
\begin{aligned}
&\left(\frac{1}{N}-\frac{\|\ti V_+\|_{\frac{N}{2}}S^{-1}}{2N}\right)\int_{\R^N}|\nabla u_\al|^2dx\\
&\hspace{1cm}\le \frac{C_{N,q}(q-2)}{2q}\left(\int_{\R^N} u_\al^2dx\right)^{\frac{2q-N(q-2)}{4}}
\left(\int_{\R^N}|\nabla u_\al|^2dx\right)^{\frac{N(q-2)}{4}}.
\end{aligned}
\end{equation}
As a consequence there holds for $u_\al\ne0$:
\begin{equation}\label{2.7}
\int_{\R^N}|\nabla u_\al|^2dx\ge\left(\frac{q(2-\|\ti V_+\|_{\frac{N}{2}}S^{-1})}{NC_{N,q}(q-2)}\right)^{\frac{4}{N(q-2)-4}}
\al^{\frac{q(N-2)-2N}{N(q-2)-4}}.
\end{equation}
Next it follows from \eqref{2.5}, \eqref{2.6}, \eqref{2.7}  and $2<p<2+\frac{4}{N}<2^*$  that
\begin{equation*}
\begin{aligned}
\frac{(2-q)\la_\al}{2q}\int_{\R^N}u_\al^2dx
 &=\;\frac{(N-2)q-2N}{2Nq}\int_{\R^N}|\nabla u_\al|^2dx
 +\frac{1}{2N}\int_{\R^N}\ti Vu_\al^2dx\\
 &\hspace{1cm}+\frac{q-2}{2q}\int_{\R^N}Vu_\al^2dx-\frac{(q-p)\be}{pq}\int_{\R^N}|u_\al|^pdx\\
 &\leq \frac{(N-2)q-2N}{2Nq}\int_{\R^N}|\nabla u_\al|^2dx+
 \frac{\|\ti V\|_\infty}{2N}\al+\frac{(q-2)\|V\|_\infty}{2q}\al\\
 &\hspace{1cm}-\frac{(q-p)\be C_{N,p}}{pq}\al^{\frac{2p-N(p-2)}{4}}
 \left(\int_{\R^N}|\nabla u_\al|^2dx\right)^{\frac{N(p-2)}{4}}\\
 &\to -\infty \quad \text{as}\ \al\to0.
 \end{aligned}
 \end{equation*}
Therefore, if $u_\al\ne0$ for $\al>0$ small there exists $\al_0>0$ such that $\la_\al>0$ for $0<\al<\al_0$.

In order to complete the proof we consider the case that there is a sequence $\al_k\to 0$ such that $u_{\al_k}=0$ for any $k$.
Assume without loss of generality that  $u_\al=0$ for any $\al\in (0,\al_1)$. Let $x_{r,\al}\in \Om_{r}$ be such that 
$u_{r,\al}(x_{r,\al})=\max\limits_{\Om_r}u_{r,\al}$. In view of \eqref{2.43}, there holds  $|x_{r,\al}|\to\infty$ as $r\to\infty.$
Otherwise, there exists $x_0\in\R$ such that, up to a subsequence $x_{r,\al}\to x_0$, and hence $u_\al(x_0)\ge d_\al>0.$ This contradicts $u_\al=0$.
We claim that dist$(x_{r,\al},\partial \Om_r)\to \infty$ as $r\to\infty$. Arguing by contradiction we assume that
$\mathop{\liminf}\limits_{r\to\infty}{\rm dist}(x_{r,\al},\partial \Om_r)=l<\infty$.
It follows from \eqref{2.43} that $l>0$. Let $w_r(x)=u_{r,\al}(x+x_{r,\al})$ for any $x\in \Sigma^r:=\{x\in\R^N:x+x_{r,\al}\in \Om_r\}$.
Then $w_r$ is bounded in $H^1(\R^N)$, and there is $w\in H^1(\R^N)$ such that $w_r\rightharpoonup w$ as $r\to\infty.$ By the regularity theory of elliptic partial equations and $\mathop{\liminf}\limits_{r\to\infty} u_{r,\al}(x_{r,\al})>d_\al>0,$ we infer that $w(0) \ge d_\al > 0$.
Assume without loss of the generality that, up to a subsequence,
$$\lim_{r\to\infty}\frac{x_{r,\al}}{|x_{r,\al}|}=e_1.$$
Setting
$$\Sigma=\{x\in\R^N:x\cdot e_1<l\}=\{x\in\R^N:x_1<l\},$$
we have $\phi(\cdot-x_{r,\al})\in C_c^\infty(\Om_r)$ for any $\phi\in C_c^\infty(\Sigma)$ and  $r$ large enough.
It then follows that
\begin{equation}\label{2-3}
\begin{aligned}
&\int_{\Om_r}\nabla u_{r,\al}\nabla \phi(\cdot-x_{r,\al})dx+\int_{\Om_r}V u_{r,\al}\phi(\cdot-x_{r,\al})dx+\la_{r,\al}\int_{\Om_r} u_{r,\al} \phi(\cdot-x_{r,\al})dx\\
&\hspace{1cm}=\int_{\Om_r} |u_{r,\al}|^{q-2}u_{r,\al} \phi(\cdot-x_{r,\al})dx+\be \int_{\Om_r} |u_{r,\al}|^{p-2}u_{r,\al} \phi(\cdot-x_{r,\al})dx.
\end{aligned}
\end{equation}
Since $|x_{r,\al}|\to\infty$ as $r\to\infty$ we have
\begin{equation}\label{2-18}
\begin{aligned}
\left|\int_{\Om_r}V u_{r,\al} \phi(\cdot-x_{r,\al})dx\right|&\le\int_{Supp \phi}\left|V(\cdot+x_{r,\al})w_r\phi\right| dx\\
&\le \|w_r\|_{2^*}\|\phi\|_{2^*}\left(\int_{Supp \phi}|V(\cdot+x_{r,\al})|^{\frac{N}{2}} dx\right)^{\frac{2}{N}}\\
&\le \|w_r\|_{2^*}\|\phi\|_{2^*}\left(\int_{\R^N\setminus B_{\frac{|x_{r,\al}|}{2}}}|V|^{\frac{N}{2}} dx\right)^{\frac{2}{N}}\\
&\to 0\quad \text{as\ } r\to\infty.
\end{aligned}
\end{equation}
Letting $r\to \infty$ in \eqref{2-3} we obtain for $\phi\in C_c^\infty(\Sigma)$:
\begin{equation*}
  \int_{\Sigma}\nabla w \cdot\nabla\phi dx+\la_\al\int_{\Sigma}w \phi dx
   = \int_{\Sigma} |w|^{q-2}w \phi dx+\be \int_{\Sigma} |w|^{p-2}w \phi dx.
\end{equation*}
Thus $w\in H_0^1(\Sigma)$ is a weak solution of the equation
 \begin{equation}\label{2.22}
-\De w+\la_\al w=|w|^{q-2}w+\be|w|^{p-2}w\quad \text{in}\ \Sigma.
\end{equation}
Hence we obtain a nontrivial nonnegative solution of \eqref{2.22} on a half space which is impossible by the Liouville theorem (see e.g.\ \cite{Esteban-Lions}). This proves that dist$(x_{r,\al},\partial \Om_r)\to \infty$ as $r\to\infty$. A similar argument as above shows that \eqref{2.22} holds for $\Sigma=\R^N$. Now we argue as in the case $u_\al\neq 0$ above that there exists $\al_2$ such that $\la_\al>0$ for any $0<\al<\al_2$.

Setting $\bar\al=\min\{\al_0,\al_1,\al_2\}$ the proof is complete.
\end{proof}

\vskip 0.1in
\begin{proof}[\bf Proof of Theorem \ref{boundexist}]
 The proof is an immediate consequence of Theorem~\ref{existbound} and Lemmas~\ref{blow} and \ref{multi}.
\end{proof}

\subsection{Proof of Theorem~\ref{boundexi}}
In this section we assume that the assumptions of Theorem \ref{boundexi} hold.
Let $0<\al<\al_V$ be fixed and consider the functions
$$h(t)=\frac{1}{2}\left(1-\|V_-\|_{\frac{N}{2}}S^{-1}\right)t^2-\frac{\be C_{N,p}}{p}\al^{\frac{2p-N(p-2)}{4}}
t^{\frac{N(p-2)}{2}}
-\frac{C_{N,q}}{q}\al^{\frac{2q-N(q-2)}{4}}t^{\frac{N(q-2)}{2}}$$
and
$$\phi(t):=\frac{1}{2}\left(1-\|V_-\|_{\frac{N}{2}}S^{-1}\right)t^{\frac{4-N(p-2)}{2}}
-\frac{C_{N,q}}{q}\al^{\frac{2q-N(q-2)}{4}}t^{\frac{N(q-p)}{2}}.$$
Note that $\phi$ admits a unique maximum at
$$\bar t=\left(\frac{q(4-N(p-2))(1-\|V_-\|_{\frac{N}{2}}S^{-1})}{2N(q-p)C_{N,q}}\right)^{\frac{2}{N(q-2)-4}}\al^{\frac{N(q-2)-2q}{2(N(q-2)-4)}}.$$
By a direct calculation and the definition of $\al_V$, we obtain
\begin{equation}\label{2.10}
\phi(\bar t)>\frac{\be C_{N,p}}{p}\al^{\frac{2p-N(p-2)}{4}}
\end{equation}
and $h(\bar t)>0$. In view of $2<p<2+\frac{4}{N}<q<2^*$ and \eqref{2.10}, there exist $0<R_1<T_\al<R_2$ such that
$h(t)<0$ for $0<t<R_1$ and for $t>R_2$,  $h(t)>0$ for $R_1<t<R_2$, and $h(T_\al)=\mathop{\max}\limits_{t\in\R^+}h(t)>0.$

\vskip 0.1in
Define
\begin{equation}\label{2.44}
\mathcal B_{r,\al}=\{u\in S_{r,\al}:\|\nabla u\|_2^2\le {T_\al^2}\}.
\end{equation}
Let $\theta$ be the principal eigenvalue of operator $-\De$ with Dirichlet boundary condition in $\Om$, and let $|\Om|$ be the volume of $\Om$.

\vskip 0.1in
\begin{thm}\label{2.25}
\begin{itemize}
\item[\rm (i)]  If $r<\frac{\sqrt{\theta\al}}{T_\al}$, then $\mathcal B_{r,\al}=\emptyset.$
\item[\rm (ii)] If
\begin{equation}\label{r}
r>\max\left\{\frac{\sqrt{\theta\al}}{T_\al},\left(\frac{2\be}{p}
\al^{\frac{p-2}{2}}\left(\theta\left(1+\|V\|_{\frac{N}{2}}S^{-1}\right)\right)^{-1}|\Om|^{\frac{2-p}{2}}\right)^{\frac{2}{N(p-2)-4}}\right\}
\end{equation}
then $\mathcal B_{r,\al}\neq\emptyset$ and
$$e_{r,\al}:=\inf_{u\in \mathcal B_{r,\al}}E_r(u)<0$$
is attained at some interior point $u_r>0$ of $\mathcal B_{r,\al}$. As a consequence, there exists a Lagrange multiplier $\la_r\in\R$ such that $(\la_r,u_r)$ is a solution of \eqref{boundeq1}. Moreover $\liminf\limits_{r\to\infty}\la_r>0$ holds true.
\end{itemize}
\end{thm}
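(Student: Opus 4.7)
For part (i), I invoke the Poincaré inequality on the scaled domain: the first Dirichlet eigenvalue of $-\Delta$ on $\Om_r$ equals $\theta/r^2$, so every $u\in S_{r,\al}$ obeys $\|\nabla u\|_2^2\ge \theta\al/r^2$, and the hypothesis $r<\sqrt{\theta\al}/T_\al$ forces $\|\nabla u\|_2^2>T_\al^2$, ruling out membership in $\mathcal{B}_{r,\al}$. For (ii), nonemptiness uses the scaled eigenfunction family $v_t(x)=t^{N/2}v_1(tx)$ from Lemma~\ref{moun}: for $t\ge 1/r$ one has $v_t\in H^1_0(\Om_r)$, and the choice $t=1/r$ gives $v_{1/r}\in S_{r,\al}$ with $\|\nabla v_{1/r}\|_2^2=\theta\al/r^2<T_\al^2$ under the first inequality on $r$, so $\mathcal{B}_{r,\al}\neq\emptyset$. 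A direct computation of $E_r(v_{1/r})$ --- estimating the potential term by H\"older and Sobolev and lower bounding $\int_{\Om_r}|v_{1/r}|^p$ by $r^{-N(p-2)/2}\al^{p/2}|\Om|^{(2-p)/2}$ via H\"older --- shows that the negative $p$-contribution dominates the positive kinetic and potential contributions exactly when the second inequality in \eqref{r} holds; the key point is that $2<p<2+\frac{4}{N}$ yields $N(p-2)/2<2$, so the $p$-term decays slower than the kinetic term as $r\to\infty$. Thus $e_{r,\al}\le E_r(v_{1/r})<0$.

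Next I produce a minimizer. Via Gagliardo-Nirenberg and the bound $\int V u^2\ge -\|V_-\|_{\frac{N}{2}}S^{-1}\|\nabla u\|_2^2$, one has the pointwise lower bound $E_r(u)\ge h(\|\nabla u\|_2)$ on all of $S_{r,\al}$, so $E_r$ is bounded below on $\mathcal{B}_{r,\al}$. A minimizing sequence $\{u_n\}$ is automatically bounded in $H^1_0(\Om_r)$; since $\Om_r$ is bounded, the Rellich-Kondrachov theorem extracts a subsequence with $u_n\rightharpoonup u_r$ in $H^1_0(\Om_r)$ and $u_n\to u_r$ strongly in every $L^t(\Om_r)$, $2\le t<2^*$. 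Strong $L^2$-convergence places $u_r$ in $S_{r,\al}$, continuity of $u\mapsto\int Vu^2$ on $L^2(\Om_r)$ (since $V$ is bounded), together with lower semicontinuity of $\|\nabla\cdot\|_2$, give $E_r(u_r)\le e_{r,\al}$ and $u_r\in\mathcal{B}_{r,\al}$; replacing $u_n$ by $|u_n|$ throughout allows $u_r\ge 0$. The minimizer lies strictly in the interior, because $\|\nabla u_r\|_2^2=T_\al^2$ would give $E_r(u_r)\ge h(T_\al)>0$, contradicting $e_{r,\al}<0$. Hence only the mass constraint is active, the Lagrange multiplier rule supplies $\la_r\in\R$ with $(\la_r,u_r)$ solving \eqref{boundeq1}, and the strong maximum principle together with $\|u_r\|_2^2=\al>0$ upgrades $u_r\ge 0$ to $u_r>0$.

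Finally I handle the sign of $\la_r$. Testing the equation against $u_r$ yields $\|\nabla u_r\|_2^2+\int Vu_r^2+\la_r\al=\|u_r\|_q^q+\be\|u_r\|_p^p$; eliminating $\|\nabla u_r\|_2^2+\int Vu_r^2$ using $E_r(u_r)=e_{r,\al}$ gives the clean identity
\[
  \la_r\al = -2e_{r,\al} + \frac{q-2}{q}\int_{\Om_r}|u_r|^q\,dx + \be\,\frac{p-2}{p}\int_{\Om_r}|u_r|^p\,dx,
\]
and every summand on the right is positive ($e_{r,\al}<0$ from the first step, $p,q>2$, $\be>0$, $u_r>0$), so $\la_r>0$. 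For the $\liminf$, I observe that extension by zero embeds $\mathcal{B}_{r,\al}$ into $\mathcal{B}_{r',\al}$ for $r\le r'$ (using $0\in\Om$ and convexity of $\Om$ to get $\Om_r\subset\Om_{r'}$), so $r\mapsto e_{r,\al}$ is nonincreasing; fixing any $r_1$ satisfying \eqref{r} yields $\la_r\ge -2e_{r_1,\al}/\al>0$ uniformly for $r\ge r_1$. The delicate point of the whole argument is the joint calibration of $T_\al=\max h$ and the lower bound \eqref{r} on $r$: these are tuned so that the Gagliardo-Nirenberg estimate $E_r\ge h$ simultaneously guarantees the existence of a negative-energy test function inside $\mathcal{B}_{r,\al}$ and prevents the gradient constraint $\|\nabla u\|_2\le T_\al$ from being active at the minimizer, so that the geometric minimization on an artificial ball genuinely produces a constrained critical point of $E_r$ on $S_{r,\al}$.
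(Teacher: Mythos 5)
Your proof is correct and follows essentially the same route as the paper: Poincar\'e scaling for (i), the scaled first eigenfunction $r^{-N/2}v_1(\cdot/r)$ as a negative-energy test function in $\mathcal B_{r,\al}$, the Gagliardo--Nirenberg lower bound $E_r(u)\ge h(\|\nabla u\|_2)$ to bound $E_r$ below and to exclude the boundary $\|\nabla u\|_2=T_\al$ via $h(T_\al)>0$, and then the identity $\la_r\al=-2e_{r,\al}+\frac{q-2}{q}\|u_r\|_q^q+\be\,\frac{p-2}{p}\|u_r\|_p^p$ combined with the monotonicity of $e_{r,\al}$ in $r$ (from $\Om_r\subset\Om_{r'}$) to get $\liminf_{r\to\infty}\la_r>0$. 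The only harmless deviation is that you obtain the minimizer by direct minimization (weak compactness plus Rellich and weak lower semicontinuity, with $|u_n|$ replacing $u_n$ for nonnegativity), whereas the paper first invokes Ekeland's principle before passing to the same limit.
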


\vskip 0.1in
\begin{proof}
Let $v_1\in S_{1,\al}$ be the positive normalized eigenfunction corresponding to $\theta$. The Poincar\'e inequality implies 
\begin{equation}
\int_{\Om_r}|\nabla u|^2dx\ge \frac{\theta\al}{r^2}
\end{equation}
for any $u\in S_{r,\al}.$ Since $T_\al$ is independent of $r$, there holds
$B_{r,\al}=\emptyset$  if and only if $r<\frac{\sqrt{\theta\al}}{T_\al}$.
Setting
\begin{equation}\label{2.12}
r_\al=\max\left\{\frac{\sqrt{\theta\al}}{T_\al},\left(\frac{p\theta\left(1+\|V\|_{\frac{N}{2}}S^{-1}\right)}{2\be}
\al^{\frac{2-p}{2}}|\Om|^{\frac{p-2}{2}}\right)^{\frac{2}{N(p-2)-4}}\right\}
\end{equation}
we construct for $r>r_\al$ a function $u_r\in S_{r,\al}$ such that $u_r\in\mathcal  B_{r,\al}$ and $E_r(u_r)<0$. Observe that
\begin{equation}\label{2.11}
\int_{\Om}|\nabla v_1|^2dx=\theta\al
\end{equation}
and
\begin{equation}
\begin{aligned}
\al=\int_{\Om}|v_1|^2dx\leq \left(\int_{\Om}|v_1|^pdx\right)^{\frac{2}{p}}|\Om|^{\frac{p-2}{p}}.
\end{aligned}
\end{equation}
We define $u_r\in S_{r,\al}$ by $u_r(x)=r^{-\frac{N}{2}}v_1(r^{-1}x)$ for $x\in \Om_r$. Then
$$\int_{\Om_r}|\nabla u_r|^2dx=r^{-2}\theta\al\quad\text{and}\quad
  \int_{\Om_r}| u_r|^pdx\ge r^{\frac{N(2-p)}{2}}\al^{\frac{p}{2}}|\Om|^{\frac{2-p}{2}}.$$
By \eqref{2.12}, $2<p<2+\frac{4}{N}$ and a direct calculation we have $u_r\in \mathcal B_{r,\al}$ and
\begin{equation}\label{2.13}
\begin{aligned}
E_{r}(u_r)
&= \frac{1}{2}\int_{\Om_r}|\nabla u_r|^2dx+\frac{1}{2}\int_{\Om_r}Vu_r^2dx-\frac{1}{q}\int_{\Om_r}|u_r|^qdx-\frac{\be}{p}\int_{\Om_r}|u_r|^pdx\\
&< \frac{1}{2}\left(1+\|V\|_{\frac{N}{2}}S^{-1}\right)r^{-2}\theta\al-\frac{\be}{p}r^{\frac{N(2-p)}{2}}\al^{\frac{p}{2}}|\Om|^{\frac{2-p}{2}}\\
&\le 0.
\end{aligned}
\end{equation}
It then follows from the Gagliardo-Nirenberg inequality that
\begin{equation}\label{2.18}
\begin{aligned}
E_{r}(u)&\ge\frac{1}{2}\left(1-\|V_-\|_{\frac{N}{2}}S^{-1}\right)\int_{\Om_r}|\nabla u|^2dx-\frac{C_{N,p}\be}{p}\al^{\frac{2p-N(p-2)}{4}}
\left(\int_{\Om_r}|\nabla u|^2dx\right)^{\frac{N(p-2)}{4}}\\
&\hspace{1cm} -\frac{C_{N,q}}{q}\al^{\frac{2q-N(q-2)}{4}}\left(\int_{\Om_r}|\nabla u|^2dx\right)^{\frac{N(q-2)}{4}}.
\end{aligned}
\end{equation}
As a consequence $E_{r}$ is bounded from below in $\mathcal B_{r,\al}$. By the Ekeland principle there exists a sequence $\{u_{n,r}\}\subset\mathcal  B_{r,\al}$ such that
$$E_{r}(u_{n,r})\to \inf_{u\in\mathcal  B_{r,\al}}E_r(u)E_{r}^\prime(u_{n,r})|_{T_{u_{n,r}}S_{r,\al}}\to 0
\quad \text{as}\ n\to\infty.$$
Consequently there exists $u_r\in H_0^1(\Om_r)$ such that
$$u_{n,r}\rightharpoonup u_r\ \text{in}\ H_0^1(\Om_r) \quad\text{and}\quad u_{n,r}\to u_r\ \text{in}\ L^k(\Om_r)\ \text{for all } 2\le k< 2^*.$$
Moreover,
$$\|\nabla u_r\|_2^2\le \liminf_{n\to\infty}\|\nabla u_{n,r}\|_2^2\le T_\al^2,$$
that is, $u_r\in\mathcal B_{r,\al}$. Note that
$$\int_{\Om_r}Vu_{n,r}^2dx\to\int_{\Om_r}Vu_{r}^2dx\quad\text{as}\ n\to\infty$$
hence
\begin{equation}
e_{r,\al}\le E_{r}(u_r)\le \liminf_{n\to\infty}E_{r}(u_{n,r})=e_{r,\al}.
\end{equation}
It follows that $u_{n,r}\to u_r $ in $H_0^1(\Om_r)$, so $E_{r}(u_r)<0$. Therefore $u_r$ is an interior point of $\mathcal B_{r,\al}$ because 
$E_{r}(u)\ge h(T_\al)>0$ for any $u\in\partial\mathcal B_{r,\al}$ by \eqref{2.18}. The Lagrange multiplier theorem implies that there exists $\la_r\in\R$ such that $(\la_r,u_r)$ is a solution of \eqref{boundeq1}. Moreover,
\begin{equation}\label{2.14}
\begin{aligned}
\la_r\al&=\int_{\Om_r}|u_r|^qdx+\be\int_{\Om_r}|u_r|^pdx-\int_{\Om_r}|\nabla u_r|^2dx-\int_{\Om_r}V u_r^2dx\\
&=\frac{(p-2)\be}{p}\int_{\Om_r}|u_r|^pdx+\frac{q-2}{q}\int_{\Om_r}|u_r|^qdx-2E_r(u_r)\\
&>-2E_{r}(u_r)=-2e_{r,\al}.
\end{aligned}
\end{equation}
It follows from the definition of $e_{r,\al}$ that  $e_{r,\al}$ is nonincreasing with respect to $r$. Hence,
$e_{r,\al}\le  e_{r_\al,\al}<0$ for any $r>r_\al$ and $0<\al<\al_V.$
In view of \eqref{2.14}, we have $$\liminf_{r\to\infty}\la_r>0.$$
Finally, the strong maximum principle implies $u_r>0$.
\end{proof}

\vskip 0.1in
\begin{proof}[\bf Proof of Theorem \ref{boundexi}]
The proof is a direct consequence of Theorem \ref{2.25}, Lemma \ref{blow} and Remark \ref{2.26}.
\end{proof}

\subsection{Proof of Theorem~\ref{multiplicity}}
In this subsection we assume that the assumptions of Theorem \ref{multiplicity} hold. For $s\in \left[\frac{1}{2},1\right]$ we define the functional
$\mathcal E_{r,s}:S_{r,\al}\to\R$ by
\begin{equation}\label{2-6}
{\mathcal E_{r,s}(u)}=\frac{1}{2}\int_{\Om_r}|\nabla u|^2dx+\frac{1}{2}\int_{\Om_r}Vu^2dx-s
\left(\frac{\be}{p}\int_{\Om_r}|u|^pdx+\frac{1}{q}\int_{\Om_r}|u|^qdx\right).
\end{equation}
Note that if $u\in S_{r,\al}$ is a critical point of $\mathcal E_{r,s}$ then there exists $\la\in \R$ such that $(\la,u)$ is a solution of the problem
\begin{equation}\label{meq11}
\begin{cases}
-\De u+Vu+\la u=s|u|^{q-2}u+s\be |u|^{p-2}u\quad \text{in}\ \Om_r,\\
u\in H^1_0(\Om_r),\quad\int_{\Om_r}|u|^2dx=\al.
\end{cases}
\end{equation}

\vskip 0.1in

\begin{lemma}\label{moun1}
For $0<\al<\ti \al_V$ where $\ti \al_V$ is defined in Theorem \ref{multiplicity}, there exist  $\ti r_\al>0$ and $u^0,u^1\in S_{r_\al,\al}$ such that
\begin{itemize}
\item [\rm (i)]
For $r>\ti r_\al$ and $s\in \left[\frac{1}{2},1\right]$ we have $\mathcal E_{r,s}(u^1)\le0$ and
$$\mathcal E_{r,s}(u^0)<\frac{N(q-2)-4}{4}
\left(\frac{2(1-\|V_-\|_\frac{N}{2}S)}{N(q-2)}\right)^{\frac{N(q-2)}{N(q-2)-4}}A^{\frac{4}{4-N(q-2)}}\al^{\frac{N(q-2)-2q}{N(q-2)-4}}$$
where $$A=\left(\frac{C_{N,q}(q-2)(N(q-2)-4)}{q (p-2)(4-N(p-2))}+\frac{C_{N,q}}{q}\right).$$
Moreover, 
$$\|\nabla u^0\|_2^2< \left(\frac{2(1-\|V_-\|_{\frac{N}{2}}S^{-1})}{N(q-2)A}\right)^{\frac{4}{N(q-2)-4}}\al^{\frac{N(q-2)-2q}{N(q-2)-4}}$$
and
$$\|\nabla u^1\|_2^2> \left(\frac{2(1-\|V_-\|_{\frac{N}{2}}S^{-1})}{N(q-2)A}\right)^{\frac{4}{N(q-2)-4}}\al^{\frac{N(q-2)-2q}{N(q-2)-4}}$$

\item[\rm (ii)]If  $u\in S_{r,\al}$ satisfies
 $$\|\nabla u\|_2^2=\left(\frac{2(1-\|V_-\|_{\frac{N}{2}}S^{-1})}{N(q-2)A}\right)^{\frac{4}{N(q-2)-4}}
 \al^{\frac{N(q-2)-2q}{N(q-2)-4}},$$
 then there holds
  $$\mathcal E_{r,s}(u)\ge \frac{N(q-2)-4}{4}
\left(\frac{2(1-\|V_-\|_\frac{N}{2}S)}{N(q-2)}\right)^{\frac{N(q-2)}{N(q-2)-4}}A^{\frac{4}{4-N(q-2)}}\al^{\frac{N(q-2)-2q}{N(q-2)-4}}.$$

\item [\rm(iii)] Let
$${m_{r,s}(\al)}=\inf_{\gamma\in \Gamma_{r,\al}}\sup_{t\in [0,1]}\mathcal E_{r,s}(\gamma(t)),$$
where
 {$$\Gamma_{r,\al}=\left\{\gamma\in C([0,1],S_{r,\al}):\gamma(0)=u^0, \gamma(1)=u^1\right\}.$$}
Then 
$$m_{r,s}(\al)\ge\frac{N(q-2)-4}{4}
\left(\frac{2(1-\|V_-\|_\frac{N}{2}S)}{N(q-2)}\right)^{\frac{N(q-2)}{N(q-2)-4}}A^{\frac{4}{4-N(q-2)}}\al^{\frac{N(q-2)-2q}{N(q-2)-4}}$$
and
$$
\begin{aligned}
 m_{r,s}(\al)&\le \frac{N(q-2)-4}{2}\left(\frac{\theta (1+\|V\|_{\frac{N}{2}}S^{-1})}{N(q-2)}\right)^{\frac{N(q-2)}{N(q-2)-4}}\\
                  &\hspace{1cm} \times (4q)^{\frac{4}{N(q-2)-4}}|\Om|^{\frac{2(q-2)}{N(q-2)-4}}\al^{\frac{N(q-2)-2q}{N(q-2)-4}},
\end{aligned}
$$
where $\theta$ is the principal eigenvalue of $-\De$ with Dirichlet boundary condition in $\Om$.
\end{itemize}
\end{lemma}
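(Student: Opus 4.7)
The strategy parallels that of Lemma~\ref{moun}, but the case $\be>0$ forces one to control both nonlinear terms simultaneously rather than exploit the favorable sign of the $p$-term. The new ingredient is a weighted Young inequality that converts the mass-subcritical contribution into a fraction of the linear gradient term plus a fraction of the mass-supercritical term; the constant $A_{p,q}$ and the threshold $\ti\al_V$ are engineered so that after this absorption a positive mountain-pass barrier survives.

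For the lower bound (part (ii) and the lower estimate in (iii)), I first apply Gagliardo--Nirenberg together with $\|V_-\|_{\frac{N}{2}}<S$, $s\le 1$ and $\be>0$ to obtain, for any $u\in S_{r,\al}$,
\[
 \mathcal E_{r,s}(u)\ge \frac{1-\|V_-\|_{\frac{N}{2}}S^{-1}}{2}\|\nabla u\|_2^2 - \frac{\be C_{N,p}}{p}\al^{\frac{2p-N(p-2)}{4}}\|\nabla u\|_2^{\frac{N(p-2)}{2}} - \frac{C_{N,q}}{q}\al^{\frac{2q-N(q-2)}{4}}\|\nabla u\|_2^{\frac{N(q-2)}{2}}.
\]
Setting $y=\|\nabla u\|_2^2$, $\mu=\frac{N(p-2)}{4}<1$ and $\nu=\frac{N(q-2)}{4}>1$, the right hand side has the form $f(y)=ay-b_1y^\mu-b_2y^\nu$. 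A weighted Young inequality of the form $b_1 y^\mu\le \lambda a y+(1-\lambda)\kappa b_2 y^\nu$, with the exponent balance $\lambda\cdot 1+(1-\lambda)\nu=\mu$ and weights chosen via the definition of $A_{p,q}$, reduces the estimate to a single-power bound $f(y)\ge \tilde a y-\frac{A}{q}\al^{\frac{2q-N(q-2)}{4}}y^\nu$. Its unique positive maximum is attained exactly at the threshold $y=t_*^2$ from (ii) and evaluates to the stated constant; the hypothesis $\al<\ti\al_V$ is precisely what keeps the residual positive.

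For the construction of $u^0,u^1$ and the upper bound, let $v_1>0$ be the Dirichlet principal eigenfunction on $\Om$ with $\|v_1\|_2^2=\al$ and $\|\nabla v_1\|_2^2=\theta\al$, and set $v_t(x)=t^{N/2}v_1(tx)\in S_{1/t,\al}$, so $\|\nabla v_t\|_2^2=t^2\theta\al$. Dropping the non-negative term $\frac{s\be}{p}\|v_t\|_p^p$ and using the H\"older bound $\|v_t\|_q^q\ge t^{N(q-2)/2}\al^{q/2}|\Om|^{(2-q)/2}$ yields
\[
 \mathcal E_{1/t,s}(v_t)\le \bar h(t):=\tfrac{1}{2}\bigl(1+\|V\|_{\frac{N}{2}}S^{-1}\bigr)t^2\theta\al -\tfrac{1}{2q}t^{\frac{N(q-2)}{2}}\al^{\frac{q}{2}}|\Om|^{\frac{2-q}{2}},
\]
which is negative for all $t\ge t_0$, with $t_0$ explicit. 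Choose $t_1$ small enough that $\|\nabla v_{t_1}\|_2^2$ is below the threshold of (i) and $\mathcal E_{r,s}(v_{t_1})$ is strictly below the Step~1 barrier, and set $u^0=v_{t_1}$, $u^1=v_{t_0}$, $\ti r_\al=1/\min\{t_0,t_1\}$. The affine path $\gamma(\tau)=v_{(1-\tau)t_1+\tau t_0}$ lies in $\Gamma_{r,\al}$ for $r>\ti r_\al$, and $\sup_\tau \mathcal E_{r,s}(\gamma(\tau))\le \max_{t>0}\bar h(t)$; an explicit maximization of $\bar h$ gives the announced upper estimate in (iii). The matching lower bound for $m_{r,s}(\al)$ follows from an intermediate-value argument: since $\|\nabla u^0\|_2^2$ and $\|\nabla u^1\|_2^2$ lie on opposite sides of $t_*^2$, every path in $\Gamma_{r,\al}$ must cross that level, where Step~1 forces $\mathcal E_{r,s}\ge$ the claimed value. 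The decisive technical step is the weighted Young absorption in the lower bound: it must be sharp enough to preserve a positive maximum, and the accounting of the resulting constant $A$ requires careful tracking of $\mu,\nu$ through the inequality.
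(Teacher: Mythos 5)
There is a genuine gap at the step you yourself call decisive: the ``weighted Young absorption''. With $\mu=\frac{N(p-2)}{4}$ and $\nu=\frac{N(q-2)}{4}$ one has $\mu<1<\nu$ because $2<p<2+\frac4N<q$, so $\mu$ lies \emph{below both} exponents on the right-hand side. Your exponent balance $\lambda\cdot 1+(1-\lambda)\nu=\mu$ has no solution with $\lambda\in[0,1]$ (the left-hand side ranges over $[1,\nu]$; formally $\lambda=\frac{\nu-\mu}{\nu-1}>1$), and indeed no inequality $b_1y^{\mu}\le c_1y+c_2y^{\nu}$ with $c_1,c_2>0$ can hold for all $y>0$: as $y\to0^+$ the left side dominates both terms on the right. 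Consequently the claimed global single-power bound $f(y)\ge\tilde a\,y-\frac{A}{q}\al^{\frac{2q-N(q-2)}{4}}y^{\nu}$ is false near $y=0$ (there $f<0$ while the reduced bound is positive), so the identification of its maximizer and maximum with the threshold and barrier of (ii) collapses; moreover, if $\lambda>0$ the reduced linear coefficient $\tilde a<\frac12\left(1-\|V_-\|_{\frac N2}S^{-1}\right)$ would not reproduce the constant appearing in the statement.

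The absorption must instead be carried out only on a region bounded away from $0$. This is what the paper does: it introduces $t_2$, the point where $f''$ changes sign, observes that $\max_{t>0}f=\max_{t\ge t_2}f$, and for $t\ge t_2$ uses the monotonicity of $t^{-\frac{N(q-p)}{4}}$ to bound the $\be$-term by a constant multiple of the $q$-term alone (no part of the linear term is sacrificed); this is exactly where the constant $A$ and the comparison function $g(t)=\frac12\left(1-\|V_-\|_{\frac N2}S^{-1}\right)t-A\al^{\frac{2q-N(q-2)}{4}}t^{\frac{N(q-2)}{4}}$ come from. The hypothesis $\al<\ti\al_V$ is then used to guarantee that the maximizer $t_g$ of $g$ satisfies $t_g>t_2$, i.e.\ lies in the region where the absorption is valid---not merely ``to keep the residual positive''. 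With this correction the remainder of your plan (scaled Dirichlet eigenfunction, choice of $u^0,u^1$, the path $\tau\mapsto v_{(1-\tau)t_1+\tau t_0}$, the upper bound via $\max\bar h$, and the crossing argument giving the lower bound on $m_{r,s}(\al)$) coincides with the paper's proof; note also that, like the paper, you still rely on the unverified fact that $\|\nabla u^1\|_2^2=t_0^2\theta\al$ lies above the threshold, which is needed for the crossing argument.
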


\vskip 0.1in
\begin{proof}
Let $v_1\in S_{1,\al}$ be the positive normalized eigenfunction of $-\De$ with Dirichlet boundary condition in $\Om$ associated to $\theta$. Then we have
\begin{equation}
\int_{\Om}|\nabla v_1|^2dx=\theta\al \quad\text{and}\quad \int_{\Om}| v_1|^pdx\ge\al^{\frac{p}{2}}|\Om|^{\frac{2-p}{2}}.
\end{equation}
Setting $v_t(x)=t^{\frac{N}{2}}v_1(tx)$ for $x\in B_{\frac{1}{t}}$ and $t>0$ we get
\begin{equation}\label{2.151}
\begin{aligned}
\mathcal E_{\frac{1}{t},s}(v_t)\leq&\; \frac{1}{2}\left(1+\|V\|_{\frac{N}{2}}S^{-1}\right)t^2\int_{\Om}|\nabla v_1|^2dx
-\frac{\be}{2p}t^{\frac{N(p-2)}{2}}\int_{\Om}| v_1|^pdx\\
&\;-\frac{1}{2q}t^{\frac{N(q-2)}{2}}\int_{\Om}| v_1|^qdx\\
\le &\;h(t),\\
\end{aligned}
\end{equation}
where the function $h:\R^+\to\R$ is defined by
$$h(t)=\frac{1}{2}\left(1+\|V\|_{\frac{N}{2}}S^{-1}\right)t^2\theta\al-\frac{1}{2q}\al^{\frac{q}{2}}|\Om|^{\frac{2-q}{2}}t^{\frac{N(q-2)}{2}}.$$
A simple computation shows that $h(t_0)=0$ for
$$t_0 := \left((1+\|V\|_{\frac{N}{2}}S^{-1})q\theta\al^{\frac{2-q}{2}}|\Om|^{\frac{q-2}{2}}\right)^{\frac{2}{N(q-2)-4}}$$
and $h(t)<0$ for any $t>t_0$, $h(t)>0$ for any $0<t<t_0$. Moreover $h$ achieves its maximum at
$$t_\al=\left(\frac{4q(1+\|V\|_{\frac{N}{2}}S^{-1})\theta}{N(q-2)}\al^{\frac{2-q}{2}}|\Om|^{\frac{q-2}{2}}\right)^{\frac{2}{N(q-2)-4}}.$$
This implies
\begin{equation}\label{2.301}
\mathcal E_{r,s}(v_{t_0})= \mathcal E_{\frac{1}{t_0},s}(v_{t_0})\le h(t_0)=0
\end{equation}
for any {$r\ge\frac{1}{t_0}$} and $s\in\left[\frac{1}{2},1\right]$.
There exists $0<t_1<t_\al$ such that for any $t\in [0,t_1],$
\begin{equation}\label{2.311}
h(t)<\frac{N(q-2)-4}{4}
\left(\frac{2(1-\|V_-\|_\frac{N}{2}S)}{N(q-2)}\right)^{\frac{N(q-2)}{N(q-2)-4}}A^{\frac{4}{4-N(q-2)}}\al^{\frac{N(q-2)-2q}{N(q-2)-4}}.
\end{equation}
On the other hand, it follows from the  Gagliardo-Nirenberg inequality and the
H\"older inequality that
\begin{equation}\label{2.231}
\begin{aligned}
\mathcal E_{r,s}(u)&\ge \frac{1}{2}\left(1-\|V_-\|_{\frac{N}{2}}S^{-1}\right)\int_{\Om_r}|\nabla u|^2dx-
\frac{C_{N,q}\al^{\frac{2q-N(q-2)}{4}}}{q}\left(\int_{\Om_r}|\nabla u|^2dx\right)^{\frac{N(q-2)}{4}}\\
&\hspace{1cm} -\frac{C_{N,p}\be\al^{\frac{2p-N(p-2)}{4}}}{p}\left(\int_{\Om_r}|\nabla u|^2dx\right)^{\frac{N(p-2)}{4}}.
\end{aligned}
\end{equation}
Let $f:\R^+\to \R$ be defined by
\begin{equation}\label{lower}
f(t)=\frac{1}{2}\left(1-\|V_-\|_{\frac{N}{2}}S^{-1}\right)t-\frac{\be C_{N,p}}{p}\al^{\frac{2p-N(p-2)}{4}}t^{\frac{N(p-2)}{4}}
-\frac{C_{N,q}}{q}\al^{\frac{2q-N(q-2)}{4}}t^{\frac{N(q-2)}{4}}.
\end{equation}
In view of $2<p<2+\frac{2}{N}<q<2^*$ and the definition of $\ti \al_V$, there exist $0<l_1<l_M<l_2$ such that
$h(t)<0$ for any $0<t<l_1$ and $t>l_2$,  $f(t)>0$ for $l_1<t<l_2$ and $f(l_M)=\mathop{\max}\limits_{t\in\R^+}f(t)>0.$
Let
$$t_2=\left(\frac{\be q C_{N,p}(p-2)(4-N(p-2))}{p C_{N,q}(q-2)(N(q-2)-4)}\right)^{\frac{4}{N(q-p)}}\al^{\frac{N-2}{N}}.$$
Then by a direct calculation, we have $f^{\prime\prime}(t)\le 0$ if and only if $t\ge t_2.$ Hence
$$\max_{t\in\R^+}f(t)=\max_{t\in[t_2,\infty)}f(t).$$
Note that for any $t\ge t_2,$
\begin{equation}\label{2.32}
\begin{aligned}
f(t) &= \frac{1}{2}\left(1-\|V_-\|_{\frac{N}{2}}S^{-1}\right)t-\frac{\be C_{N,p}}{p}\al^{\frac{(q-p)(N-2)}{4}}\al^{\frac{2q-N(q-2)}{4}}
t^{\frac{N(p-2)}{4}}-\frac{C_{N,q}}{q}\al^{\frac{2q-N(q-2)}{4}}t^{\frac{N(q-2)}{4}}\\
&\ge \frac{1}{2}\left(1-\|V_-\|_{\frac{N}{2}}S^{-1}\right)t-\left(\frac{C_{N,q}(q-2)(N(q-2)-4)}{q (p-2)(4-N(p-2))}+\frac{C_{N,q}}{q}\right)\al^{\frac{2q-N(q-2)}{4}}t^{\frac{N(q-2)}{4}}\\
&=:g(t).
\end{aligned}
\end{equation}
Let
$$A=\left(\frac{C_{N,q}(q-2)(N(q-2)-4)}{q (p-2)(4-N(p-2))}+\frac{C_{N,q}}{q}\right).$$
and
$$t_g=\left(\frac{2(1-\|V_-\|_{\frac{N}{2}}S^{-1})}{N(q-2)A}\right)^{\frac{4}{N(q-2)-4}}\al^{\frac{N(q-2)-2q}{N(q-2)-4}}$$
so that $t_g>t_2$ by the definition of $\ti \al_V$, $\mathop{\max}\limits_{t\in [t_2,\infty)}g(t)=g(t_g)$ and
\begin{equation}
\begin{aligned}
\max_{t\in\R^+}f(t)&\ge\max_{t\in [t_2,\infty)}g(t)\\
&=\frac{(N(q-2)-4)}{4}
\left(\frac{2(1-\|V_-\|_\frac{N}{2}S)}{N(q-2)}\right)^{\frac{N(q-2)}{N(q-2)-4}}A^{\frac{4}{4-N(q-2)}}\al^{\frac{N(q-2)-2q}{N(q-2)-4}}.
\end{aligned}
\end{equation}
Set
 $\bar r_\al=\max\left\{\frac{1}{t_1},\sqrt{\frac{2\theta\al}{t_g}}\right\},$ then $v_{\frac{1}{\bar r_\al}}\in S_{r,\al}$
 for any $r>\bar r_\al$, and
 \begin{equation}\label{2.281}
\|\nabla v_{\frac{1}{\bar r_\al}}\|_2^2=\left(\frac{1}{\bar r_\al}\right)^2\|\nabla v_1\|_2^2<\left(\frac{2(1-\|V_-\|_{\frac{N}{2}}S^{-1})}{N(q-2)A}\right)^{\frac{4}{N(q-2)-4}}\al^{\frac{N(q-2)-2q}{N(q-2)-4}},
\end{equation}
moreover,
\begin{equation}\label{2.291}
\mathcal E_{\bar r_\al,s}(v_{\frac{1}{\bar r_\al}})\le h\left(\frac{1}{\bar r_\al}\right)\le h(t_1).
\end{equation}
Let $u^0=v_{\frac{1}{\bar r_\al}},u^1=v_{t_0}$ and
\begin{equation}\label{2.41}
{{\ti r_\al}=\max\left\{\frac{1}{t_0}, \bar r_\al\right\}}.
\end{equation}
Then the statement (i) holds by \eqref{2.281}, \eqref{2.291}, \eqref{2.311} and \eqref{2.301}.
\vskip 0.1in
(ii) holds by \eqref{2.32} and a direct calculation.
\vskip 0.1in
(iii) In view of $\mathcal E_{r,s}(u^1)\le0$ for any $\gamma\in\Gamma_{r,\al}$ and the definition of $t_0$, we have
 $$\|\nabla \gamma(0)\|_2^2<t_g< \|\nabla \gamma(1)\|_2^2.$$
 It then follows from \eqref{2.32} that
 $$
 \begin{aligned}
 \max_{t\in[0,1]}\mathcal E_{r,s}(\gamma(t)) &\ge g(t_g)\\
    &= \frac{N(q-2)-4}{4}
          \left(\frac{2(1-\|V_-\|_\frac{N}{2}S^{-1})}{N(q-2)}\right)^{\frac{N(q-2)}{N(q-2)-4}}A^{\frac{4}{4-N(q-2)}}\al^{\frac{N(q-2)-2q}{N(q-2)-4}}
\end{aligned}
$$
for any $\gamma\in\Gamma_{r,\al}$, hence  the first inequality in (iii) holds.
We define a path $\ga:[0,1]\to S_{r,\al}$ by
 \begin{equation}\label{2.161}
 \gamma(t):\Om_r\to\R,\quad 
   x\mapsto \left(\tau t_0+(1-\tau)\frac{1}{\ti r_\al}\right)^{\frac{N}{2}}v_1\left(\left(\tau t_0+(1-\tau)\frac{1}{\ti r_\al}\right) x\right).
\end{equation}
Then $\gamma\in \Gamma_{r,\al}$, and the second inequality in (iii) follows from \eqref{2.151}.
\end{proof}

\begin{thm}\label{existmeq1}
Assume $0<\al<\ti \al_V$ where $\ti \al_V$ is given in Theorem \ref{multiplicity}.
Let $r>\ti r_\al$, where $\ti r_\al$ is defined in Lemma \ref{moun1}. Then problem \eqref{meq11} admits a solution $(\la_{r,s},u_{r,s})$ for almost every $s\in\left[\frac{1}{2},1\right]$. Moreover, there hold $u_{r,s}> 0$ and $\mathcal E_{r,s}(u_{r,s})=m_{r,s}(\al)$.
\end{thm}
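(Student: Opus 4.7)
The plan is to mirror the proof of Theorem~\ref{existmeq}, adapting it to the $\be>0$ case. I would apply Theorem~\ref{criticalprinciple} with $E=H^1_0(\Om_r)$, $H=L^2(\Om_r)$, $\mu=\al$, and the splitting
\[
\mathcal E_{r,s}(u)=A(u)-sB(u),\qquad
A(u)=\tfrac12\!\int_{\Om_r}|\nabla u|^2dx+\tfrac12\!\int_{\Om_r}Vu^2dx,\quad
B(u)=\tfrac{\be}{p}\!\int_{\Om_r}|u|^pdx+\tfrac{1}{q}\!\int_{\Om_r}|u|^qdx.
\]
Since $\be>0$, we have $B\ge0$, and the hypothesis $\|V_-\|_{N/2}<S$ together with the Sobolev inequality yields $A(u)\ge\tfrac12(1-\|V_-\|_{N/2}S^{-1})\|\nabla u\|_2^2$, so $A(u)\to\infty$ as $\|\nabla u\|_2\to\infty$. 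The $\tau$-H\"older continuity of $\mathcal E_{r,s}'$ and $\mathcal E_{r,s}''$ on bounded sets is standard since $2<p,q<2^*$. Finally, the mountain-pass geometry with endpoints $u^0,u^1$ from Lemma~\ref{moun1} is precisely what parts (i)--(iii) of that lemma give: the lower bound on $m_{r,s}(\al)$ strictly exceeds both $\mathcal E_{r,s}(u^0)$ and $\mathcal E_{r,s}(u^1)\le0$.

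Theorem~\ref{criticalprinciple} then provides, for almost every $s\in[\tfrac12,1]$, a bounded Palais--Smale sequence $\{u_n\}\subset S_{r,\al}$ at level $m_{r,s}(\al)$. Writing the associated Lagrange multipliers
\[
\la_n=-\tfrac{1}{\al}\Bigl(\|\nabla u_n\|_2^2+\!\int Vu_n^2\,dx-s\be\!\int|u_n|^p\,dx-s\!\int|u_n|^q\,dx\Bigr),
\]
one sees $\{\la_n\}$ is bounded. By compactness of $H^1_0(\Om_r)\hookrightarrow L^t(\Om_r)$ for $2\le t<2^*$ on the bounded set $\Om_r$, one can extract $\la_n\to\la$ and $u_n\rightharpoonup u_{r,s}$ in $H^1_0(\Om_r)$ with $u_n\to u_{r,s}$ in $L^t$ for all such $t$. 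The convergence of the potential term $\int Vu_n^2\,dx\to\int Vu_{r,s}^2\,dx$ follows since $V\in L^{N/2}$ and $u_n^2\to u_{r,s}^2$ in $L^{N/(N-2)}$ by H\"older. Then testing $\mathcal E_{r,s}'(u_n)+\la_n u_n\to 0$ against $u_n$ and $u_{r,s}$ and comparing the two limits upgrades weak to strong convergence in $H^1_0(\Om_r)$, yielding $\mathcal E_{r,s}(u_{r,s})=m_{r,s}(\al)$ and showing $(\la,u_{r,s})$ solves \eqref{meq11}.

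To obtain $u_{r,s}\ge0$ I would follow the argument at the end of the proof of Theorem~\ref{existmeq}: since $s\mapsto m_{r,s}(\al)$ is nonincreasing, at any $s$ of differentiability the construction in \cite{Borthwick-Chang-Jeanjean-Soave,Chang-Jeanjean-Soave} produces paths $\gamma_n\in\Gamma_{r,\al}$ whose top values approximate $m_{r,s}(\al)$ from below. Replacing $\gamma_n$ by $|\gamma_n|$ preserves membership in $\Gamma_{r,\al}$ (as $|u^0|=u^0$, $|u^1|=u^1$) and does not increase the energy, thanks to $\|\nabla|u|\|_2\le\|\nabla u\|_2$. This yields a nonnegative bounded Palais--Smale sequence, hence a nonnegative limit $u_{r,s}$.

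The only step with any genuine obstacle is the passage to a nonnegative critical point: one must check that the technical construction of approximating paths from \cite{Borthwick-Chang-Jeanjean-Soave,Chang-Jeanjean-Soave} survives the replacement $\gamma_n\mapsto|\gamma_n|$, i.e.\ that both the boundedness condition on $\|\nabla\gamma_n(t)\|_2$ where the energy is high, and the energy upper bound along the path, transfer from $\gamma_n$ to $|\gamma_n|$. Both do, because the potential, kinetic, and nonlinear terms are each monotone under $u\mapsto|u|$ in the required sense. The final strict positivity $u_{r,s}>0$ then follows from the strong maximum principle applied to $-\De u_{r,s}+(V+\la)u_{r,s}=s u_{r,s}^{q-1}+s\be u_{r,s}^{p-1}\ge 0$, using that $u_{r,s}\not\equiv0$ since $m_{r,s}(\al)>0$ by Lemma~\ref{moun1}(iii).
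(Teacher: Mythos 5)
Your proposal is correct and follows exactly the route the paper intends: the paper's own proof of Theorem~\ref{existmeq1} simply refers back to the proof of Theorem~\ref{existmeq}, using Lemma~\ref{moun1} in place of Lemma~\ref{moun} and Theorem~\ref{criticalprinciple} with the splitting $A(u)=\frac12\int|\nabla u|^2+\frac12\int Vu^2$, $B(u)=\frac{\be}{p}\int|u|^p+\frac1q\int|u|^q\ge0$ (valid since $\be>0$), followed by the same compactness, $|\gamma_n|$-replacement, and maximum-principle arguments you describe.
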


\begin{proof}
The proof is similar to that of Theorem \ref{existmeq} by Lemma \ref{moun1} and Theorem \ref{criticalprinciple}. We omit it here.
\end{proof}

\begin{lemma}\label{priori1}
For fixed $\al>0$ the set of solutions $u\in S_{r,\al}$ of \eqref{meq11} is bounded uniformly in $s$ and $r$.
\end{lemma}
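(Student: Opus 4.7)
The plan is to adapt the Pohozaev-identity approach of Lemma \ref{priori} to the case $\be>0$, where the $\be|u|^{p-2}u$ term can no longer simply be discarded by sign. Let $(\la,u)$ be a solution of \eqref{meq11} produced by Theorem \ref{existmeq1}, so that $\mathcal E_{r,s}(u)=m_{r,s}(\al)$ is uniformly bounded in $r$ and $s$ by Lemma \ref{moun1}(iii). Testing the equation against $u$ gives the energy identity
\[
\int_{\Om_r}|\nabla u|^2dx+\int_{\Om_r}Vu^2dx+\la\al=s\int_{\Om_r}|u|^qdx+s\be\int_{\Om_r}|u|^pdx,
\]
while multiplying by $x\cdot\nabla u$ and integrating yields the Pohozaev identity, whose boundary contribution $\tfrac{1}{2N}\int_{\partial\Om_r}|\nabla u|^2(x\cdot\mathbf n)d\sigma$ is nonnegative because $\Om_r$ is convex and $0\in\Om$.

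Setting $J=\int_{\Om_r}|\nabla u|^2dx$, I would first eliminate $\la$ by subtracting one half of the energy identity from the Pohozaev identity, and then eliminate $\int_{\Om_r}|u|^qdx$ by further subtracting $\tfrac{q-2}{2}\mathcal E_{r,s}(u)=\tfrac{q-2}{2}m_{r,s}(\al)$. Dropping the nonnegative boundary term and using $\big|\int Vu^2\big|\le\|V\|_\infty\al$ and $\big|\int\ti V u^2\big|\le\|\ti V\|_\infty\al$, these manipulations lead to an inequality of the shape
\[
\frac{N(q-2)-4}{4N}\,J\;\le\;C_1(\al)+\frac{\be(q-p)}{2p}\int_{\Om_r}|u|^pdx,
\]
where $C_1(\al)$ depends only on $\al$, $\|V\|_\infty$, $\|\ti V\|_\infty$ and the uniform upper bound on $m_{r,s}(\al)$ from Lemma \ref{moun1}(iii). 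Observe that the coefficient $\tfrac{N(q-2)-4}{4N}$ is strictly positive since $q>2+\tfrac{4}{N}$.

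The decisive step, and the key difference from Lemma \ref{priori}, is to exploit that $p$ is mass \emph{subcritical}: the Gagliardo-Nirenberg inequality gives
\[
\int_{\Om_r}|u|^pdx\le C_{N,p}\,\al^{\frac{2p-N(p-2)}{4}}J^{\frac{N(p-2)}{4}},
\]
and the exponent $\mu:=\tfrac{N(p-2)}{4}$ lies in $(0,1)$. Substituting reduces the previous inequality to the form $J\le A+BJ^{\mu}$ with uniform constants $A,B$ and $0<\mu<1$, which by a standard Young-type argument (or direct analysis of $t\mapsto t-Bt^{\mu}\to+\infty$ as $t\to+\infty$) yields a uniform upper bound on $J$. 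The main obstacle to anticipate is organizing the linear combination of the Pohozaev and energy identities so that the surviving $\be$-term involves only $\int|u|^pdx$ with the \emph{subcritical} Gagliardo-Nirenberg exponent; had the supercritical term $\int|u|^qdx$ remained with a positive coefficient, the resulting power $\tfrac{N(q-2)}{4}>1$ would have made the analogous reverse-Young argument fail.
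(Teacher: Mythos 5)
Your proposal is correct and follows essentially the same route as the paper: combine the Pohozaev identity with the energy identity, use $\mathcal E_{r,s}(u)=m_{r,s}(\al)$ together with the uniform bound on $m_{r,s}(\al)$ from Lemma~\ref{moun1}(iii) to eliminate the $q$-term, drop the nonnegative boundary term by convexity, and control the remaining $\be\int|u|^p$ term via Gagliardo--Nirenberg with subcritical exponent $\frac{N(p-2)}{4}<1$. The resulting inequality $J\le A+BJ^{\mu}$ with $0<\mu<1$ is exactly how the paper closes the argument.
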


\begin{proof}
By an argument similar to that in Lemma \ref{priori} we have
\begin{equation*}
 \begin{aligned}
 &\frac{1}{N}\int_{\Om_r}|\nabla u|^2dx-\frac{1}{2N}\int_{\partial \Om_r}|\nabla u|^2(x\cdot \mathbf{n})d\sigma-
 \frac{1}{2N}\int_{\Om_r}\ti Vu^2dx\\
 &\hspace{1cm}= \frac{(q-2)s}{2q}\int_{ \Om_r}|u|^qdx+\frac{\be(p-2)s}{2p}\int_{ \Om_r}|u|^pdx\\
 &\hspace{1cm}= \frac{q-2}{2}\left(\frac{1}{2}\int_{\Om_r}|\nabla u|^2dx+\frac{1}{2}\int_{ \Om_r}Vu^2dx-m_{r,s}(\al)\right)
 +\frac{\be s(p-q)}{2p}\int_{ \Om_r}|u|^pdx.
 \end{aligned}
 \end{equation*}
 It then follows from  the Gagliardo-Nirenberg inequality that
 \begin{align*}
 &\frac{q-2}{2}m_{r,s}(\al)+\frac{\be (q-p)C_{N,p}}{2p}\al^{\frac{2p-N(p-2)}{4}}\left(\int_{\Om_r}|\nabla u|^2dx\right)^{\frac{N(p-2)}{4}}\\
&\hspace{1cm} \ge \frac{q-2}{2}m_{r,s}(\al)+\frac{\be s(q-p)}{2p}\int_{ \Om_r}|u|^pdx.\\
&\hspace{1cm} \ge \frac{q-2}{2}\left(\frac{1}{2}\int_{\Om_r}|\nabla u|^2dx+\frac{1}{2}\int_{ \Om_r}Vu^2dx\right)\\
&\hspace{2cm} -\left(\frac{1}{N}\int_{\Om_r}|\nabla u|^2dx-\frac{1}{2N}\int_{\partial \Om_r}|\nabla u|^2(x\cdot \mathbf{n})d\sigma-
 \frac{1}{2N}\int_{\Om_r}(\nabla V\cdot x)u^2dx\right)\\
&\hspace{1cm} \ge \frac{N(q-2)-4}{4N}\int_{\Om_r}|\nabla u|^2dx-\al\left(\frac{1}{2N}\|\nabla V\cdot x\|_\infty+\frac{q-2}{4}\|V\|_\infty\right).
 \end{align*}
As a consequence of $2<p<2+\frac{4}{N}$ and (iii) in Lemma \ref{moun1} we can bound $\int_{\Om_r}|\nabla u|^2dx$ uniformly in $s$ and $r$.
\end{proof}

\begin{thm}\label{existbound1}
Assume $0<\al<\ti \al_V$, where $\ti \al_V$ is given in Theorem \ref{multiplicity}, and let $r>\ti r_\al$, where $\ti r_\al$ is defined in Lemma \ref{moun1}.
Then the following hold:
\begin{itemize}
\item [\rm (i)] Equation \eqref{boundeq1} admits a solution $(\la_{r,\al},u_{r,\al})$ for every $r>\ti r_\al$ such that $u_{r,\al}>0$ in $\Om_r$.
\item [\rm (ii)]
There is $0<\bar\al\le \ti \al_V$  such that
 $$\mathop{\liminf}\limits_{r\to\infty}\la_{r,\al}>0\quad
\text{for any}\ 0<\al<\bar\al.$$
\end{itemize}
\end{thm}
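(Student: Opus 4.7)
The plan is to mirror the strategy already used to pass from Theorem~\ref{existmeq} to Theorem~\ref{existbound} and then to Lemma~\ref{multi}, but now for the family $\mathcal E_{r,s}$ instead of $E_{r,s}$. For part (i), I would start from Theorem~\ref{existmeq1}: for every $r>\ti r_\al$ and almost every $s\in[\tfrac12,1]$ there is a nonnegative solution $(\la_{r,s},u_{r,s})$ of \eqref{meq11} with $\mathcal E_{r,s}(u_{r,s})=m_{r,s}(\al)$. Lemma~\ref{priori1} gives a bound on $\|\nabla u_{r,s}\|_2$ that is uniform in $s$ (and $r$), and the $L^2$-constraint fixes the low norm, so $\{u_{r,s}\}$ is bounded in $H^1_0(\Om_r)$. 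Using the equation together with the Gagliardo--Nirenberg inequality also bounds the Lagrange multiplier $\la_{r,s}$. Along any sequence $s_n\nearrow 1$ I would extract weak/strong subsequential limits $(\la_r,u_r)$ and, exactly as in the proof of Theorem~\ref{existbound}, exploit the compactness of the embedding $H^1_0(\Om_r)\hookrightarrow L^k(\Om_r)$ for $2\le k<2^*$ and the continuity of $u\mapsto \int_{\Om_r}Vu^2dx$ (since $V\in L^{N/2}$) to upgrade weak convergence to strong convergence; this yields a weak solution of \eqref{boundeq1}. Strict positivity of $u_r$ then follows from the strong maximum principle applied to the nonnegative limit.

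For part (ii), I would follow the scheme of Lemma~\ref{multi}. First, Lemma~\ref{blow} (which is stated independently of the sign of $\be$, cf.\ Remark~\ref{2.26}) gives $\limsup_{r\to\infty}\|u_{r,\al}\|_\infty<\infty$, so the function
\[
  g(\al) := \liminf_{r\to\infty}\max_{\Om_r}u_{r,\al}
\]
is well defined. I would show $g(\al)>0$ for all sufficiently small $\al>0$ by contradiction: if $g(\al_k)=0$ along a sequence $\al_k\to0$, then $\int_{\Om_{r_k}}|u_{r_k,\al_k}|^s dx\to0$ for $s>2$, while the lower bound $m_{r,s}(\al)\gtrsim \al^{\frac{N(q-2)-2q}{N(q-2)-4}}\to\infty$ in Lemma~\ref{moun1}(iii) forces the gradient/potential part of the energy to blow up; the equation then drives $\la_{r_k,\al_k}\to-\infty$, which contradicts the principal eigenvalue comparison $\theta_1/r_k^2+\|V\|_\infty+\la_{r_k,\al_k}/2\ge0$ just as in Lemma~\ref{multi}.

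Next, I would pass to the limit $r\to\infty$ along the bounded family $\{u_{r,\al}\}\subset H^1(\R^N)$ to obtain $(\la_\al,u_\al)$ with $u_\al$ solving
\[
  -\Delta u_\al + V u_\al + \la_\al u_\al = |u_\al|^{q-2}u_\al + \beta |u_\al|^{p-2}u_\al \quad\text{in }\R^N.
\]
Combining the equation and the Pohozaev identity, and using that $\beta>0$ together with the Gagliardo--Nirenberg inequality to control the $p$- and $q$-terms, I would extract an upper bound
\[
  \tfrac{(q-2)\la_\al}{2q}\al \ge c_1\|\nabla u_\al\|_2^2 - c_2\al - c_3\al^{\frac{2p-N(p-2)}{4}}\|\nabla u_\al\|_2^{\frac{N(p-2)}{2}},
\]
valid on the Pohozaev manifold, with the Gagliardo--Nirenberg lower bound $\|\nabla u_\al\|_2^2\gtrsim \al^{\frac{q(N-2)-2N}{N(q-2)-4}}$ as in \eqref{2.7}. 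Since $p<2+4/N$, the $p$-term is negligible as $\al\to 0$ and this forces $\la_\al>0$ for $\al$ small when $u_\al\not\equiv 0$.

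The remaining case is $u_\al\equiv 0$: concentration at a point $x_{r,\al}\in\Om_r$ with $|x_{r,\al}|\to\infty$. Condition $(V_0)$ makes the potential term vanish in the limit via the tail estimate in \eqref{2-18}. I would prove, as in Lemma~\ref{multi}, that $\mathrm{dist}(x_{r,\al},\partial\Om_r)\to\infty$ (otherwise a half-space Liouville theorem rules out a nontrivial nonnegative limit) and then repeat the Pohozaev argument on the translated limit, which is now a problem on $\R^N$ without potential and yields the same sign conclusion. Setting $\bar\al$ to be the minimum of the thresholds obtained from these three steps gives the conclusion. The main obstacle I anticipate is the $u_\al\equiv 0$ case: ensuring that the translated limit is in fact nontrivial and that the half-space vs.\ full-space dichotomy is handled cleanly requires careful reuse of the Liouville-type arguments and tail estimates on $V$, exactly as in the proof of Lemma~\ref{multi}.
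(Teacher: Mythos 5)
Part (i) of your proposal matches the paper (which simply says the proof is similar to Theorem \ref{existbound}), and your treatment of the case $u_\al\equiv 0$ in part (ii) — nontrivial translated limit, half-space Liouville theorem, then the Pohozaev identity for the potential-free equation giving $\la>0$ since $\be>0$ and $p,q<2^*$ — is essentially the paper's Case 1. The genuine gap is in your case $u_\al\not\equiv 0$. There you invoke ``the Gagliardo--Nirenberg lower bound $\|\nabla u_\al\|_2^2\gtrsim \al^{\frac{q(N-2)-2N}{N(q-2)-4}}$ as in \eqref{2.7}'', but \eqref{2.7} is derived in Lemma \ref{multi} under two hypotheses that are not available here: $\be\le 0$ (so that the $p$-term can be discarded with the correct sign) and $\|\ti V_+\|_{\frac N2}<2S$ (so that $\int \ti V u_\al^2$ can be absorbed into the gradient term). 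In the setting of Theorem \ref{multiplicity} one only knows $\ti V\in L^\infty$, so the best one gets from the equation plus Pohozaev is an inequality of the form $c\,t\le C_0\al + C_1\al^{a}t^{\frac{N(q-2)}{4}}+C_2\be\,\al^{b}t^{\frac{N(p-2)}{4}}$ with $t=\|\nabla u_\al\|_2^2$; since $\frac{N(p-2)}{4}<1$, this inequality is satisfied by arbitrarily small $t$ and forces no lower bound on $t$ at all (indeed, if $\la_\al\le 0$ the same identities force $t\lesssim\|u_\al\|_2^2$, i.e.\ a \emph{small} gradient, because mass may escape as bubbles while the weak limit is small). Hence your concluding inequality cannot determine the sign of $\la_\al$ in this case, and the step ``this forces $\la_\al>0$ for $\al$ small when $u_\al\not\equiv 0$'' does not go through.

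The paper closes exactly this case differently: it argues by contradiction assuming $\la_{\al_n}\le 0$ along $\al_n\to0$, first obtaining $|\la_{\al_n}|\le C$ by testing with the principal eigenfunction, and then runs a vanishing/non-vanishing dichotomy for the \emph{remainder} $v_{r,\al_n}=u_{r,\al_n}-u_{\al_n}$. If vanishing \eqref{2.39} holds, strong $L^t$-convergence plus $\la_{r,\al_n}\to\la_{\al_n}$ gives \eqref{2.36}, and then the mountain-pass level $m_{r,1}(\al_n)\to\infty$ from Lemma \ref{moun1}(iii) forces $\|\nabla u_{\al_n}\|_2^2\to\infty$ \eqref{2.37}; feeding this into the combined equation/Pohozaev identity \eqref{2.38} (where, because $\be>0$, the $p$-term has the favorable sign and is simply dropped) contradicts $\la_{\al_n}\le 0$. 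If vanishing fails, a translated bubble solves the potential-free equation and the identity \eqref{2.40} yields $\la_{\al_n}>0$, again a contradiction. So the blow-up of the gradient that your argument needs is only ever established through the mountain-pass energy level in the vanishing scenario, not through a Gagliardo--Nirenberg bound; to repair your proof you would need to incorporate this dichotomy (or an equivalent use of the level $m_{r,1}(\al)\to\infty$ as $\al\to0$) rather than appeal to \eqref{2.7}.
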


\begin{proof}
The proof of (i) is similar to that of Theorem \ref{existbound}, so we omit it. As be consider $H^1_0(\Om_r)$ as a subspace of $H^1(\R^N)$ 
for every $r>0$. In view of Lemma \ref{priori1}, there are  $\la_\al$ and $u_\al\in H^1(\R^N)$ such that up to a subsequence, 
$$u_{r,\al}\rightharpoonup u_\al \quad \text{in}\ H^1(\R^N) \quad\text{and}\quad
      \lim _{r\to\infty}\la_{r,\al}\to \la_\al.$$
ARguing by contradiciton we assume that $\la_{\al_n}\le 0$ for some sequence$\al_n\to0$. 
Let $\theta_r$ be the principal eigenvalue of $-\De$ with Dirichlet boundary condtion in $\Om_r$ and let $v_>0r$ be the corresponding normalized eigenfunction. Testing \eqref{boundeq1} with $v_r$ we obtain
$$(\theta_r+\la_{r,\al_n})\int_{\Om_r}u_{r,{\al_n}}v_rdx+\int_{\Om_r}Vu_{r,{\al_n}}v_rdx\ge 0.$$
In view of $\int_{\Om_r}u_{r,{\al_n}}v_rdx>0$ and $\theta_r=r^{-2}\theta_1$, there holds
$$\max_{x\in\R^N}V+\la_{r,\al_n}+r^{-2}\theta_1\ge0.$$
Hence there exists $C>0$ independent of $n$ such that $|\la_{\al_n}|\le C$ for any $n$.

{\bf Case 1.} There is subsequence denoted still by $\{\al_n\}$ such that $u_{\al_n}=0.$
We first claim that there exists $d_n>0$ for any $n$ such that
\begin{equation}\label{2.34}
\liminf_{r\to\infty}\sup_{z\in \R^N}\int_{B(z,1)}u_{r,\al_n}^2dx\ge d_n.
\end{equation}
Otherwise, the concentration compactness principle implies for every $n$ that
$$u_{r,\al_n}\to 0\quad\text{in}\ L^t(\R^N)\quad\text{as}\ r\to\infty,\quad \text{for all } 2<t<2^*.$$
By  the diagonal principle, \eqref{boundeq1} and $|\la_{r,\al_n}|\le 2C$ for large $r$, there exists $r_n\to\infty$ such that
$$\int_{ \Om_r}|\nabla u_{r_n,\al_n}|^2dx\le C$$
for some $C$ independent of $n$, contradicting (iii) in Lemma \ref{moun1} for large $n$.
As a consequence \eqref{2.34} holds, and there is $z_{r,\al_n}\in \Om_r$ with $|z_{r,\al_n}|\to\infty$ such that
$$\int_{B(z_{r,\al_n},1)}u_{r,\al_n}^2dx\ge\frac{d_n}{2}.$$
Moreover, ${\rm dist}(z_{r,\al_n},\partial \Om_r) \to \infty$ as $r\to \infty$ by an argument similar to that in Lemma \ref{multi}.
Now, for $n$ fixed let $v_r(x)=u_{r,\al_n}(x+z_{r,\al_n})$ for $x\in \Sigma^r:=\left\{x\in\R^N: x+z_{r,\al_n}\in \Om_r\right\}$.
It follows from Lemma \ref{priori1} that there is $v\in H^1(\R^N)$ with $v\neq0$ such that $v_r\rightharpoonup v$.
Observe that for every $\phi\in C_c^\infty(\R^N)$ there is $r$ large such that $\phi(\cdot-z_{r,\al_n})\in C_c^\infty(\Om_r)$
due to ${\rm dist}(z_{r,\al_n},\partial\Om_r)\to\infty$ as $r\to\infty$. It follows that
\begin{equation}\label{2-3-1}
\begin{aligned}
&\int_{\Om_r}\nabla u_{r,\al_n}\nabla \phi(\cdot-z_{r,\al_n})dx+\int_{\Om_r}V u_{r,\al_n} \phi(\cdot-z_{r,\al_n})dx+\la_{r,\al_n}\int_{\Om_r} u_{r,\al_n} \phi(\cdot-z_{r,\al_n})dx\\
&\hspace{1cm}= \int_{\Om_r} |u_{r,\al_n}|^{q-2}u_{r,\al_n} \phi(\cdot-z_{r,\al_n})dx
    + \be \int_{\Om_r} |u_{r,\al_n}|^{p-2}u_{r,\al_n} \phi(\cdot-z_{r,\al_n})dx.
\end{aligned}
\end{equation}
From $|z_{r,\al_n}|\to\infty$ as $r\to\infty$ we deduce
\begin{equation*}
\begin{aligned}
\left|\int_{\Om_r}V u_{r,\al_n} \phi(\cdot-z_{r,\al_n})dx\right|&\le\int_{Supp \phi}\left|V(\cdot+z_{r,\al_n})v_r\phi\right|dx\\
&\le \|v_r\|_{2^*}\|\phi\|_{2^*}\left(\int_{\R^N\setminus B_{\frac{|z_{r,\al_n}|}{2}}}|V|^{\frac{N}{2}} dx\right)^{\frac{2}{N}}\\
&\to 0\quad \text{as\ } r\to\infty.
\end{aligned}
\end{equation*}
{Letting $r\to\infty$  in \eqref{2-3-1} we get for every $\phi\in C_c^\infty(\R^N)$:
\begin{equation*}
\int_{\R^N}\nabla v \cdot\nabla\phi dx+\la_{\al_n}\int_{\R^N}v \phi dx
  = \int_{\R^N} |v|^{q-2}v \phi dx+\be \int_{\R^N} |v|^{p-2}v \phi dx 
\end{equation*}
Therefore $v\in H^1(\R^N)$ is a weak solution} of the equation
$$-\De v+\la_{\al_n} v=\be|v|^{p-2}v+|v|^{q-2}v\quad\text{in}\ \R^N$$
and
$$\int_{\R^N}|\nabla v|^2dx+\la_{\al_n}\int_{\R^N}|v|^2dx=
\be\int_{\R^N}|v|^pdx+\int_{\R^N}|v|^qdx.$$
The Pohozaev identity implies
$$\frac{N-2}{2N}\int_{\R^N}|\nabla v|^2dx+\frac{\la_{\al_n}}{2}\int_{\R^N}|v|^2dx=
\frac{\be}{p}\int_{\R^N}|v|^pdx+\frac{1}{q}\int_{\R^N}|v|^qdx$$
hence
\begin{equation}\label{2.40}
\frac{\la_{\al_n}}{N}\int_{\R^N}|v|^2dx=\frac{\be(2N-p(N-2))}{2Np}\int_{\R^N}|v|^pdx+
\frac{2N-q(N-2)}{{2Nq}}\int_{\R^N}|v|^qdx.
\end{equation}
As a result we have $\la_{\al_n}>0,$ which is a contradiction.

{\bf Case 2.}  $u_{\al_n}\neq0$ for $n$ large.  Note that $u_{\al_n}$ satisfies
\begin{equation}\label{2.35}
-\De u_{\al_n} +Vu_{\al_n}+\la_{\al_n} u_{\al_n}=\be|u_{\al_n}|^{p-2}u_{\al_n}+|u_{\al_n}|^{q-2}u_{\al_n}.
\end{equation}
If $v_{r,{\al_n}} := u_{r,{\al_n}}-u_{\al_n}$ satisfies
\begin{equation}\label{2.39}
  \limsup_{r\to\infty}\max_{z\in \R^N}\int_{B(z,1)}v_{r,{\al_n}}^2dx=0,
\end{equation}
then the concentration compactness principle implies $u_{r,{\al_n}}\to u_{\al_n}$ in $L^t(\R^N)$ for any $2<t<2^*$.
It then follows from \eqref{boundeq1} and \eqref{2.35} that
\begin{equation*}
\begin{aligned}
\int_{\Om_r}|\nabla u_{r,{\al_n}}|^2dx+{\al_n}\la_{r,\al_n}&=\be\int_{\Om_r}|u_{r,{\al_n}}|^pdx
+\int_{\Om_r}|u_{r,{\al_n}}|^qdx-\int_{\Om_r}Vu_{r,{\al_n}}^2dx\\
&=\be\int_{\R^N}|u_{r,{\al_n}}|^pdx+\int_{\R^N}|u_{r,{\al_n}}|^qdx-\int_{\R^N}Vu_{r,{\al_n}}^2dx\\
&\to \be\int_{\R^N}|u_{{\al_n}}|^pdx+\int_{\R^N}|u_{{\al_n}}|^qdx-\int_{\R^N}Vu_{{\al_n}}^2dx\\
&=\int_{\R^N}|\nabla u_{{\al_n}}|^2dx+\la_{\al_n}\int_{\R^N}u_{\al_n}^2dx.
\end{aligned}
\end{equation*}
Using $\la_{r,\al_n}\to \la_{\al_n}$ as $r\to\infty$ we further have
\begin{equation}\label{2.36}
\int_{\Om_r}|\nabla u_{r,{\al_n}}|^2dx+{\al_n}\la_{\al_n}\to \int_{\R^N}|\nabla u_{{\al_n}}|^2dx
+\la_{\al_n}\int_{\R^N}u_{\al_n}^2dx\quad {as}\ r\to\infty.
\end{equation}
In view of \eqref{2.36}, (iii) in Lemma \ref{moun1} and  $|\la_{\al_n}|\le C$ for large $n$, there holds
\begin{equation}\label{2.37}
\int_{\R^N}|\nabla u_{{\al_n}}|^2dx\to \infty\quad {as}\ n\to\infty.
\end{equation}
By \eqref{2.35} and the Pohozaev identity, we obtain
\begin{equation}\label{2.38}
\begin{aligned}
0\le&\;\frac{(2-q)\la_{\al_n}}{2q}\int_{\R^N}u_{\al_n}^2dx\\
=&\;\frac{(N-2)q-2N}{2Nq}\int_{\R^N}|\nabla u_{\al_n}|^2dx+
\frac{1}{2N}\int_{\R^N}\ti Vu_{\al_n}^2\\
&\;+\frac{q-2}{2q}\int_{\R^N}Vu_{\al_n}^2dx-\frac{(q-p)\be}{pq}\int_{\R^N}|u_{\al_n}|^pdx\\
\leq&\; \frac{(N-2)q-2N}{2Nq}\int_{\R^N}|\nabla u_{\al_n}|^2dx+
\frac{\|\ti V\|_\infty}{2N}{\al_n}+\frac{(q-2)\|V\|_\infty}{2q}{\al_n}\\
\to&\;-\infty ,\quad \text{as}\ n\to\infty.
\end{aligned}
\end{equation}
Therefore \eqref{2.39} cannot occur.
Consequently there exist  $d_n>0$ and $z_{r,\al_n}\in \Om_r$ with $|z_{r,\al_n}|\to\infty$ as $r\to\infty$  such that
$$\int_{B(z_{r,\al_n},1)}v_{r,{\al_n}}^2dx>{d_n}.$$
Then $\ti v_{r,{\al_n}} := v_{r,{\al_n}}(\cdot+z_{r,\al_n})\rightharpoonup \ti v_{\al_n}\ne0$, and $\ti v_{\al_n}$ is a nonnegative solution of
$$-\De v+\la_{\al_n} v=\be|v|^{p-2}v+|v|^{q-2}v\quad\text{in}\ \R^N.$$
In fact, we have $ \mathop{\liminf}\limits_{r\to\infty}{\rm dist}(z_{r,\al_n},\partial \Om_r)=\infty$ by the Liouville theorem on the half space.
It follows from an argument similar to that of \eqref{2.40} that $\la_{\al_n}>0$ for large $n$,  which is a contradiction.
\end{proof}

\begin{proof}[\bf Proof of Theorem \ref{multiplicity}] The proof is a direct result of Theorem \ref{existbound1},
Lemma \ref{blow} and Remark \ref{2.26}. 
\end{proof}

\vskip 0.1in
\section{Normalized solutions in $\R^N$}
In this section we specialize $\Om$ to  the unit ball $B_1\subset\R^N$ in Theorems \ref{boundexist}, \ref{boundexi} and \ref{multiplicity}, 
and then study the existence of the solutions of \eqref{eq} in $\R^N$ by analyzing the compactness of normalized solutions $u_r$ in the ball $B_r$
as the radius $r$ tends to infinity.

\vskip 0.1in
As in Section 2 we consider $H_0^1(B_r)$ as a subspace of $H^1(\R^N)$ for any $r>0$.

\begin{lemma}\label{H1bounded}
Under the assumptions of Theorem \ref{boundexist} (resp.\ Theorem \ref{boundexi} and Theorem \ref{multiplicity}),
let $(\la_r,u_r)$, $r>0$, be a family of solutions of \eqref{boundeq1} with $\Om_r=B_r$ as established in
Theorem \ref{boundexist} (resp.  Theorem \ref{boundexi} and  Theorem \ref{multiplicity}). Then $u_r$ is bounded uniformly in $r$ and $\liminf\limits_{r\to\infty}\la_{r} >0$.
\end{lemma}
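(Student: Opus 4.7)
The plan is to assemble the a priori estimates that were already established in the three subsections of Section~2, specialized to the case $\Om=B_1$. Since $B_1$ is a bounded smooth convex domain containing the origin, the hypotheses of Theorems~\ref{boundexist}--\ref{multiplicity} apply directly and yield, for each of the three frameworks, a family $\{(\la_r,u_r)\}_{r>r_\al}$ of solutions of \eqref{boundeq1} with $\Om_r=B_r$. Because $\|u_r\|_2^2=\al$ is prescribed, an $H^1(\R^N)$-bound uniform in $r$ reduces to a uniform bound on $\|\nabla u_r\|_2^2$.

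First I would collect the gradient bounds. In the setting of Theorem~\ref{boundexist}, Lemma~\ref{priori} gives an explicit bound on $\|\nabla u_{r,s}\|_2^2$ in terms of $T_\al$, $\|V\|_\infty$ and $\|\ti V\|_\infty$, all independent of $r$ and $s\in[\tfrac12,1]$; by Fatou's lemma this bound is preserved when one passes $s\to 1$ using the strong convergence $u_{r,s}\to u_r$ in $H^1_0(\Om_r)$ from Theorem~\ref{existbound}. In the setting of Theorem~\ref{boundexi}, $u_r$ is by construction an interior minimizer of $E_r$ on the set $\mathcal B_{r,\al}$ defined in \eqref{2.44}, so the bound $\|\nabla u_r\|_2^2\le T_\al^2$ is immediate. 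In the setting of Theorem~\ref{multiplicity}, Lemma~\ref{priori1} provides the analogous uniform bound for all solutions of \eqref{meq11}, and it again passes to the limit $s\to 1$ thanks to the convergence established in Theorem~\ref{existbound1}.

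Second, I would assemble the lower bounds for $\la_r$. In the framework of Theorem~\ref{boundexi}, Theorem~\ref{2.25}(ii) already gives $\liminf_{r\to\infty}\la_r>0$ for every $0<\al<\al_V$, by testing the identity \eqref{2.14} against the negativity of $e_{r,\al}$. In the mountain-pass frameworks of Theorems~\ref{boundexist} and \ref{multiplicity}, the analogous conclusion was established in Lemma~\ref{multi} and in Theorem~\ref{existbound1}(ii), respectively, under the small-mass restrictions $0<\al<\ti\al$ (which is where the hypothesis $\|\ti V_+\|_{N/2}<2S$ is used) and $0<\al\le\bar\al$. Both proofs argue by contradiction, distinguishing according to whether the weak $H^1$-limit of $u_r$ in $\R^N$ is trivial or not; in the trivial case one relocates around a concentration point $z_r$ with $|z_r|\to\infty$ and ${\rm dist}(z_r,\partial B_r)\to\infty$ (a Liouville-type argument on a half-space rules out the bounded-distance scenario), applies the Pohozaev identity to the resulting limit equation on $\R^N$, and then combines it with \eqref{2.5} to force $\la_\al>0$ for small $\al$.

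The only point requiring mild attention is the convex-domain Pohozaev argument underlying Lemmas~\ref{priori} and \ref{priori1}: on $B_r$ the boundary term $\int_{\partial B_r}|\nabla u|^2(x\cdot\mathbf{n})\,d\sigma$ is manifestly nonnegative, and this is precisely why the convexity hypothesis of Section~2 is satisfied by $B_1$. No genuinely new analytic ingredient is needed; the task is to verify, for each of the three settings in turn, that the $r$-independent constants produced in Section~2 depend only on $\al$, the norms of $V$ and $\ti V$, and the geometry of $B_1$, which allows us to let $r\to\infty$ while retaining the uniform $H^1$-bound and the positivity of $\liminf\la_r$.
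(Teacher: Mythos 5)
Your proposal is correct and follows essentially the same route as the paper, whose proof is precisely a citation of Lemma~\ref{priori} and Lemma~\ref{multi} in the setting of Theorem~\ref{boundexist}, of $u_r\in\mathcal B_{r,\al}$ via \eqref{2.44} and Theorem~\ref{2.25} in the setting of Theorem~\ref{boundexi}, and of Lemma~\ref{priori1} together with Theorem~\ref{existbound1}(ii) in the setting of Theorem~\ref{multiplicity}. Your extra remarks on passing the bounds through the limit $s\to1$ and on the convexity of $B_r$ in the Pohozaev boundary term are consistent with what the paper leaves implicit.
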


\begin{proof}
If $(\la_r,u_r)$ is a solution of \eqref{boundeq1} as  established in Theorem \ref{boundexist}, then the lemma is a direct consequence of the Lemmas~\ref{priori} and \ref{multi}.

If $(\la_r,u_r)$ is a solution of \eqref{boundeq1} as established in
Theorem \ref{boundexi}, then the lemma follows from $u_r\in \mathcal B_{r,\al}$, \eqref{2.44} and Theorem \ref{2.25}.

Finally, if $(\la_r,u_r)$ is a solution of \eqref{boundeq1} as established in
Theorems \ref{multiplicity}, then the lemma is a consequence of Lemma~\ref{priori1} and (ii) of Theorem~\ref{existbound1}.
\end{proof}

The next lemma is concerned with the profile decomposition of normalized solutions.

\begin{lemma}\label{profile}
Under the assumptions of Theorem \ref{boundexist} (resp.\ Theorem \ref{boundexi} and Theorem \ref{multiplicity}),
let $\{(\la_r,u_r)\}$ be a sequence of solutions of \eqref{boundeq1} with $\Om_r=B_r$ as established in
Theorem \ref{boundexist} (resp.\ Theorem \ref{boundexi} and Theorem \ref{multiplicity}).
Then there exists a subsequence, still denoted by $\{(\la_r, u_r)\}$, and there exist $\la>0$,
$l\in \mathbb N\cup\{0\}$, $u_0\in H^1(\R^N),$  $w^1,\cdots,w^l\in H^1(\R^N)\setminus\{0\},$
$\{z_r^k\}\subset\R^N$ with $z_r^k\in B_r$, such that
$$|z_r^k|\to\infty,\quad{\rm dist}(z_r^k,\partial B_r)\to\infty,\quad |z_r^k-z_r^{k^\prime}|\to\infty$$
for any $k,k^\prime=1,\cdots,l$ and $k\neq k^\prime$. These satisfy:
\begin{itemize}
\item [\rm (i)] $\la_r\to\la\quad\text{as}\ r\to\infty.$
\item[\rm(ii)] $\left\|u_r-u_0-\sum\limits_{k=1}^lw^k(\cdot-z_r^k)\right\|_{H^1}\to 0$ as $r\to\infty$.
\item[\rm (iii)] $u_0$ is a solution of
$$-\De u+Vu+\la u=|u|^{q-2}u+\be|u|^{p-2}u\quad\text{in}\ \R^N.$$
\item[\rm(iv)] $w^k$, $k=1,\dots,l$ is a solution of
\begin{equation}\label{3.13}
-\De u+\la u=|u|^{q-2}u+\be|u|^{p-2}u\quad\text{in}\ \R^N.
\end{equation}
\end{itemize}
\end{lemma}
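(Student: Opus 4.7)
The strategy is to iterate the standard concentration--compactness decomposition at infinity, seeded by the uniform bounds of Lemma~\ref{H1bounded}. Viewing $H_0^1(B_r)\subset H^1(\R^N)$, Lemma~\ref{H1bounded} yields a uniform $H^1$ bound on $\{u_r\}$ and $\liminf_{r\to\infty}\la_r>0$, so along a subsequence $\la_r\to\la>0$ and $u_r\rightharpoonup u_0$ in $H^1(\R^N)$ with $u_r\to u_0$ in $L^s_{\mathrm{loc}}(\R^N)$ for every $2\le s<2^*$ and a.e.\ on $\R^N$. Testing \eqref{boundeq1} against any $\phi\in C_c^\infty(\R^N)$, which for $r$ large lies in $C_c^\infty(B_r)$, and passing to the limit --- the nonlinear terms by local $L^s$ convergence, the potential term using boundedness of $V$ and $L^2_{\mathrm{loc}}$ convergence --- gives conclusion~(iii).

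Next I run a P.L.~Lions vanishing/non-vanishing dichotomy on $v_r^0:=u_r-u_0$. If $\limsup_{r\to\infty}\sup_{z\in\R^N}\int_{B(z,1)}|v_r^0|^2\,dx=0$, then $v_r^0\to 0$ in $L^s(\R^N)$ for all $2<s<2^*$; subtracting the equations for $u_r$ and $u_0$, testing against $v_r^0$, and controlling $\int V|v_r^0|^2$ by splitting into a fixed large ball (where $v_r^0\to 0$ in $L^2$) and its complement (where $\|V\|_{L^{N/2}}$ is small) upgrades convergence to strongly in $H^1$; this is the case $l=0$. Otherwise pick $z_r^1\in B_r$ with $\int_{B(z_r^1,1)}|v_r^0|^2\ge d_1>0$; since $v_r^0\to 0$ in $L^2_{\mathrm{loc}}$ necessarily $|z_r^1|\to\infty$. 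Set $\tilde u_r(x):=u_r(x+z_r^1)$; using $|z_r^1|\to\infty$ and $u_0\in L^2(\R^N)$ one verifies $\tilde u_r\rightharpoonup w^1$ in $H^1(\R^N)$ with $w^1\neq 0$, $w^1\ge 0$. The same half-space Liouville argument employed at the end of Lemma~\ref{multi} then forces $\mathrm{dist}(z_r^1,\partial B_r)\to\infty$. Because
\[
\Big|\int_{\R^N} V(x+z_r^1)\,\tilde u_r\,\phi\,dx\Big|\le \|\tilde u_r\|_{2^*}\|\phi\|_{2^*}\Big(\int_{\R^N\setminus B_{|z_r^1|/2}}|V|^{N/2}dx\Big)^{2/N}\longrightarrow 0
\]
for every $\phi\in C_c^\infty(\R^N)$, the translated equation passes to the limit and $w^1$ solves \eqref{3.13}, proving~(iv) at $k=1$.

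I then iterate on the remainder $v_r^1:=u_r-u_0-w^1(\cdot-z_r^1)$, extracting $(z_r^{k+1},w^{k+1})$ at each step. A Brezis--Lieb type splitting, together with $|z_r^k-z_r^{k'}|\to\infty$ (automatic at step $k$ from the requirement $v_r^{k-1}(\cdot+z_r^k)\rightharpoonup w^k\neq 0$, since translates of previously extracted profiles weakly vanish whenever the translation distances diverge), delivers
\[
\|u_r\|_2^2=\|u_0\|_2^2+\sum_{k=1}^{j}\|w^k\|_2^2+\|v_r^j\|_2^2+o(1).
\]
The main obstacle is closing the induction, i.e.\ proving that the procedure terminates after finitely many steps. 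For this I would use that every $w^k$ is a nontrivial $H^1$ solution of the same autonomous equation \eqref{3.13} with the fixed $\la>0$: testing against $w^k$ and applying the Gagliardo--Nirenberg inequality to the $L^q$ and $L^p$ terms, the exponent conditions $2<p<2+4/N<q<2^*$ yield a uniform lower bound $\|w^k\|_2^2\ge c(\la,\be,p,q)>0$, independent of $k$. Combined with $\sum_{k=1}^l\|w^k\|_2^2\le\al$ this forces $l<\infty$; at the final step the residual $v_r^l$ satisfies the Lions vanishing alternative and hence converges strongly to zero in $H^1$, completing~(i)--(iv).
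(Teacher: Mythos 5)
Your scheme coincides with the paper's for most of the proof: weak limit $u_0$ solving the equation with $V$, the vanishing/non-vanishing alternative for $u_r-u_0$, $|z_r^1|\to\infty$, the half-space Liouville argument from Lemma~\ref{multi} to get ${\rm dist}(z_r^1,\partial B_r)\to\infty$, and the removal of the potential in the translated equation via $V\in L^{N/2}$. The genuine gap is in your termination step. You claim that testing \eqref{3.13} with $w^k$ and applying Gagliardo--Nirenberg gives a uniform lower bound on $\|w^k\|_2^2$. It does not. Writing $A=\|\nabla w^k\|_2^2$, $B=\|w^k\|_2^2$, that computation only yields
\[
A+\la B\;\le\; C_{N,q}\,B^{\frac{2q-N(q-2)}{4}}A^{\frac{N(q-2)}{4}}\;+\;\be_+\,C_{N,p}\,B^{\frac{2p-N(p-2)}{4}}A^{\frac{N(p-2)}{4}},
\]
and since the exponents in each product sum to $q/2>1$, resp.\ $p/2>1$, the only conclusion is $A+B\ge c(\la,\be,p,q)>0$, i.e.\ a lower bound on the full $H^1$ norm. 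The displayed inequality is perfectly compatible with $B$ arbitrarily small provided $A$ is large (note $\tfrac{N(q-2)}{4}>1$, so the $q$-term can dominate $A$ for huge $A$ no matter how small $B$ is), so no bound on the mass alone follows from the tools you invoke, and your exhaustion argument ``each bump carries mass at least $c$, total mass at most $\al$'' is not justified as written.

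Two repairs are available. The simplest, and the paper's route, is to do the bookkeeping in $H^1$ instead of $L^2$: the decomposition gives $\limsup_r\|u_r\|_{H^1}\ge\|u_0\|_{H^1}+\sum_k\|w^k\|_{H^1}$, and a uniform lower bound on $\|w^k\|_{H^1}$ (which the paper obtains by combining the Nehari and Pohozaev identities with Gagliardo--Nirenberg when $\be\le0$, and from positivity of the ground-state energy together with the Pohozaev identity when $\be>0$) terminates the induction because $\{u_r\}$ is bounded in $H^1$. Alternatively, if you want to keep the $L^2$ accounting, you must add the Pohozaev identity: combined with the Nehari identity it gives
\[
\la B=\frac{2N-q(N-2)}{N(q-2)}\,A+\frac{2\be(q-p)}{p(q-2)}\int_{\R^N}|w^k|^p\,dx,
\]
which for $\be\ge0$ yields $B\gtrsim A$ and hence, together with $A+B\ge c$, a genuine mass lower bound for the fixed $\la>0$; for $\be<0$ one additionally estimates the $p$-term by Gagliardo--Nirenberg and uses the gradient lower bound. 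In either form the missing ingredient is the Pohozaev identity (or an equivalent use of the a priori gradient bound); testing plus Gagliardo--Nirenberg alone does not give what you claimed.
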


\begin{proof}
The argument is quite standard. In view of Lemma \ref{H1bounded}, there exist $u_0\in H^1(\R^N)$ and $\la>0$ such that, up to a subsequence,
$$\la_r\to\la,\quad u_r\rightharpoonup u_0\ \ \text{in } H^1(\R^N).$$
Moreover, $u_0$ is a solution of
$$-\De u+Vu+\la u=|u|^{q-2}u+\be|u|^{p-2}u\quad \text{in}\ \R^N.$$
Set $v_r^1=u_r-u_0,$ then $v_r^1\rightharpoonup 0$ as $r\to\infty$ in $H^1(\R^N)$. We consider two cases.
\vskip 0.1in
{\bf Case 1.} If
$$\limsup_{r\to\infty}\sup_{z\in\R^N}\int_{B(z,1)}|v_r^1|^2dx\to 0,$$
then we claim that $v_r^1\to0$ in $H^1(\R^N).$ In fact, it follows from the concentration compactness
principle that $v_r^1\to0$ in $L^t(\R^N)$ for any $2<t<2^*$. By the Brezis-Lieb lemma and the Vitali convergence theorem we have
\begin{align*}
o_r(1)&= \int_{\R^N}|v_r^1|^qdx+\be\int_{\R^N}|v_r^1|^pdx\\
&= \int_{\R^N}|u_r|^qdx+\be\int_{\R^N}|u_r|^pdx-\int_{\R^N}|u_0|^qdx-\be\int_{\R^N}|u_0|^pdx+o_r(1)\\
&= \int_{\R^N}|\nabla u_r|^2dx+\int_{\R^N}V u_r^2dx +\la\int_{\R^N}|u_r|^2dx\\
&\hspace{1cm} -\int_{\R^N}|\nabla u_0|^2dx-\int_{\R^N}V u_0^2dx -\la\int_{\R^N}|u_0|^2dx+o_r(1)\\
&= \int_{\R^N}|\nabla (u_r- u_0)|^2dx+\la\int_{\R^N}|u_r-u_0|^2dx+o_r(1).
\end{align*}
Thus $v_r^1=u_r- u_0 \to 0$ as claimed.

\vskip 0.1in
{\bf Case 2.} If there exists $z_r^1\in B_r$ such that
$$\int_{B(z_r^1,1)}|v_r^1|^2dx>d>0,$$
then $|z_r^1|\to\infty$ as $r\to\infty.$ Otherwise,  there is $R>0$ such that $|z_r^1|<R$. Hence
\begin{equation}\label{3.1}
\int_{B(0,R+1)}|v_r^1|^2dx\ge\int_{B(z_r^1,1)}|v_r^1|^2dx >d.
\end{equation}
This contradicts $v_r^1\rightharpoonup0$ as $r\to\infty.$
It follows from arguments similar to those in the proof of Theorem \ref{multi}
that dist$(z_r^1,\partial B_r)\to \infty$ as $r\to \infty$. Then
$w_r^1=v_r^1(\cdot+z_r^1)\rightharpoonup w^1$ in $H^1(\R^N)$, and a calculation similar to \eqref{2-18} shows that $w^1$ satisfies
$$-\De w^1+\la w^1=|w^1|^{q-2}w^1+\be|w^1|^{p-2}w^1\quad \text{in}\ \R^N.$$

Setting $v_r^2(x)=v_r^1(x)-w^1(x-z_r^1)$ we repeat the above process. If the process has been repeated $m$ times then
\begin{equation}
\begin{aligned}
0&\le \lim_{r\to\infty}\|u_r-u_0-\sum_{k=1}^{m-1}w^k(\cdot-z_r^k)\|_{H^1}\\
&=\lim_{r\to\infty} \|u_r\|_{H^1}-\|u_0\|_{H^1}-\sum_{k=1}^{m-1}\|w^k\|_{H^1}
\end{aligned}
\end{equation}
and thus
\begin{equation}\label{3.9}
\lim_{r\to\infty} \|u_r\|_{H^1}\ge\|u_0\|_{H^1}+\sum_{k=1}^{m-1}\|w^k\|_{H^1}.
\end{equation}
We claim that there is $d>0$ such that $\|w\|_{H^1}\ge d$ for any solution $w$ with $\|w\|_2^2\le \al$ of \eqref{3.13}.
Once this has been proved the theorem follows immediately.
\vskip 0.1in
Consider first the case $\be\le0$. If $w$ satisfies $\|w\|_2^2\le \al$ and
$$-\De w +\la w=|w|^{q-2}w+\be|w|^{p-2}w \quad \text{in}\ \R^N$$
then
$$\int_{\R^N}|\nabla w|^2dx+\la\int_{\R^N}|w|^2dx=\int_{\R^N}|w|^qdx+\be\int_{\R^N}|w|^pdx.$$
The Pohozaev identity implies 
$$\frac{N-2}{2N}\int_{\R^N}|\nabla w|^2dx+\frac{\la}{2}\int_{\R^N}|w|^2dx=\frac{1}{q}\int_{\R^N}|w|^qdx
+\frac{\be}{p}\int_{\R^N}|w|^pdx.$$
It follows that
$$\frac{1}{N}\int_{\R^N}|\nabla w|^2dx=\frac{q-2}{2q}\int_{\R^N}|w|^qdx+\frac{\be(p-2)}{2p}\int_{\R^N}|w|^pdx.$$
Since $\be\le0$ the Gagliardo-Nirenberg inequality now yields the required bound of $w$ from below:
$$\left(\int_{\R^N}|\nabla w|^2dx\right)^{\frac{N(q-2)-4}{4}}\ge\frac{2q}{N(q-2)}\left(\int_{\R^N}| w|^2dx\right)^{\frac{N(q-2)-2q}{4}}\ge \frac{2q}{N(q-2)}\al^{\frac{N(q-2)-2q}{4}}.$$
\vskip 0.1in
It remains to consider the case $\be>0$. Recall that the ground state energy of the fixed frequency equation \eqref{3.13} is positive as is the mountain pass value. This implies
$$\frac{1}{2}\int_{\R^N}|\nabla w|^2dx+\frac{\la}{2}\int_{\R^N}|w|^2dx-\frac{1}{q}\int_{\R^N}|w|^qdx
-\frac{\be}{p}\int_{\R^N}|w|^pdx > 0.$$
It follows from the Pohozaev identity that
$$\frac{1}{N}\int_{\R^N}|\nabla w|^2dx > 0.$$
Therefore $w$ is also bounded from below in the case $\be>0$.
\end{proof}

Now we consider the existence of a normalized solution in $\R^N$.

\begin{proof}[\bf Proofs of Theorems \ref{exis}, \ref{multipli} and \ref{exist}]
Let $\{(\la_r,u_r)\}$ be a family of solutions of \eqref{boundeq1} with $\Om_r=B_r$ as established in
Theorems \ref{boundexist} (resp.\ in Theorem \ref{boundexi} or Theorem \ref{multiplicity}).
In view of Lemma \ref{profile}, if $l=0$ then $u_r\to u_0$ as $r\to\infty$ in $H^1(\R^N)$, and we are done.
If $l>0$ then, up to a subsequence, there exists $i\in\{1,2,\cdots,l\}$ such that $|z_r^i|\le\min\{|z_r^j|:j=1,\cdots,l\}$. Assume without loss of the
generality that $i=1.$ Let
$$\frac{|z_r^k-z_r^1|}{|z_r^1|}\to d_k\in[0,\infty],$$
and set $\nu^*={\frac{1}{4}}\min\{1,\rho,d^*\}$, where $d^*=\min\{d_k:d_k\neq0\}$. Fixing $0<\delta<\nu^*$ and consider the annulus
$$A_r=B\left(z_r^1,\frac{3}{2}\delta|z_r^1|\right)\setminus B\left(z_r^1,\frac{1}{2}\delta|z_r^1|\right).$$
We claim that dist$(z_r^i, A_r)\ge{\frac{1}{4}}\delta|z_r^1|$ for any $i=2,\cdots,l$ and large $r$.
In fact, if $d_i=0$ for some $i=2,\cdots,l$, then $z_r^i\in B\left(z_r^1,\frac{1}{4}\delta|z_r^1|\right)$ and the claim holds.
If $d_i\ge d^*>0$ for some $i=2,\cdots,l$, then
$|z_r^i-z_r^1|>\frac{d_i}{2}|z_r^1|>2\delta|z_r^1|$. Then dist$(z_r^i,A_r)>\frac{1}{2}\delta|z_r^1|$
and the claim holds.
It follows from  Lemma \ref{profile} that $\|u_r\|_{L^p(A_r)}\to 0$.
Note that $u_r$ satisfies
\begin{equation}\label{3.12}
-\De u_r+Vu_r +\la u_r\le|u_r|^{q-2}u_r+\be|u_r|^{p-2}u_r \quad \text{in}\ \R^N
\end{equation}
for large $r$, where $\la:=\frac{1}{2}\mathop{\liminf}\limits_{r\to\infty}\la_r>0$ due to Lemma \ref{H1bounded}.
Setting
$$R_m=\overline{B\left(z_r^1,\frac{3}{2}\delta|z_r^1|-m\right)}\setminus B\left(z_r^1,\frac{1}{2}\delta|z_r^1|+m\right)
    \quad \text{for } m\in\mathbb N^+$$
the interior estimates of elliptic partial differential equations (see e.g.\ \cite{Gilbarg-Trudinger}) imply
\begin{equation}\label{3.2}
\|u_r\|^{q-2}_{L^\infty(R_1)} < \frac{\la}{8}\quad\text{and}\quad |\be|\cdot\|u_r\|^{p-2}_{L^\infty(R_1)} < \frac{\la}{8}.
\end{equation}
Let $\xi_m\in C_c^\infty(\R)$ be a cut-off function with $0\le\xi_m(t)\le 1,$ $|\xi_m^\prime(t)|<4$ for any $t\in \R$, and
\begin{equation*}
\xi_m(t)=
\begin{cases}
1\quad &\text{if } \frac{1}{2}\delta|z_r^1|+m<t<\frac{3}{2}\delta|z_r^1|-m,\\
0 &\text{if } t<\frac{1}{2}\delta|z_r^1|+m-1\  \text{or}\ t>\frac{3}{2}\delta|z_r^1|-m+1.
\end{cases}
\end{equation*}
Now we set $\phi_m(x)=\xi(|x-z_r^1|)$ and test \eqref{3.12} with $\phi^2_mu_r$:
\begin{equation}\label{3.3}
\begin{aligned}
&\int_{R_{m-1}}|\nabla u_r|^2\phi_m^2dx+\int_{R_{m-1}}Vu_r^2\phi_m^2dx+\la\int_{R_{m-1}}u_r^2\phi_m^2dx -\int_{R_{m-1}}|u_r|^q\phi_m^2dx\\
&\hspace{2cm}-\be\int_{R_{m-1}}|u_r|^p\phi_m^2dx\\
&\hspace{1cm}\leq -2\int_{R_{m-1}}u_r\phi_m\nabla u_r\cdot\nabla \phi_mdx\\
&\hspace{1cm}\leq 8\int_{R_{m-1}\setminus R_m}|u_r\|\nabla u_r|dx.
\end{aligned}
\end{equation}
Since $|z_r^1|\to\infty$ and {$\liminf_{|x|\to\infty}V(x)\ge0$} there exists $\bar r$
such that $V(x)\ge-\frac{\la}{4}$ for any $x\in A_r$ with $r>\bar r$. It follows from  \eqref{3.2} that
\begin{equation*}
\begin{aligned}
&\int_{R_{m-1}}|\nabla u_r|^2\phi_m^2dx+\int_{R_{m-1}}Vu_r^2\phi_m^2dx+\la\int_{R_{m-1}}u_r^2\phi_m^2dx - \int_{R_{m-1}}|u_r|^q\phi_m^2dx\\
&\hspace{2cm} -\be\int_{R_{m-1}}|u_r|^p\phi_m^2dx\\
&\hspace{1cm}\ge \int_{R_{m-1}}|\nabla u_r|^2\phi_m^2dx+\frac{\la}{2}\int_{R_{m-1}}u_r^2\phi_m^2dx\\
&\hspace{1cm}\ge \min\left\{1,\frac{\la}{2}\right\}\left(\int_{R_{m-1}}|\nabla u_r|^2\phi_m^2dx+\int_{R_{m-1}}u_r^2\phi_m^2dx\right).
\end{aligned}
\end{equation*}
Now \eqref{3.3} implies
\begin{equation}\label{3.10}
\int_{R_{m}}|\nabla u_r|^2dx+\int_{R_{m}}u_r^2dx\le \kappa\left(\int_{R_{m-1}\setminus R_{m}}|\nabla u_r|^2dx+\int_{R_{m-1}\setminus R_m}u_r^2dx\right)
\end{equation}
where $\kappa=4\max\left\{1,\frac2{\la}\right\}$. Setting
$$a_m=\int_{R_{m}}|\nabla u_r|^2dx+\int_{R_{m}}u_r^2dx \quad\text{and}\quad \theta=\frac{\kappa}{1+\kappa}$$
we obtain from \eqref{3.10}
$$a_m\le\theta a_{m-1}\le\theta^m\max_{r}\|u_r\|_{H^1}=\left(\max_{r}\|u_r\|_{H^1}\right)e^{m\ln\theta}=:Ae^{m\ln\theta}.$$
Observe that
$$D_r=B(z_r^1,\delta|z_r^1|+1)\setminus B(z_r^1,\delta|z_r^1|-1) \subset R_M $$
with
$$M=\left[\frac{\delta|z_r^1|}{2}\right]-1.$$
Therefore we have
\begin{equation*}
\int_{D_r}|\nabla u_r|^2dx+\int_{D_r}u_r^2dx\le Ae^{M\ln\theta}\le Ae^{\frac{\delta|z_r^1|}{4}\ln\theta}.
\end{equation*}
It follows from  the interior estimate that
\begin{equation}\label{3.4}
|\nabla u_r|+|u_r| \le Ae^{-c|z_r^1|}
\end{equation}
for any $x$ with $\delta|z_r^1|-\frac{1}{2}<|x-z_r^1|<\delta|z_r^1|+\frac{1}{2}$ and large $r$, where the constants $A>0$ and $c>0$ are independent of $r$.

Let $\Gamma_1=\partial B(z_r^1,\delta |z_r^1|)\cap B_r$ and $\Gamma_2=B(z_r^1,\delta |z_r^1|)\cap\partial B_r$.
Multiplying both sides of \eqref{boundeq1} with  $z_r^1\cdot\nabla u_r$  and integrating, we obtain
\begin{equation}\label{3.5}
\begin{aligned}
&\frac{1}{2}\int_{B(z_r^1,\delta |z_r^1|)\cap B_r}(z_r^1\cdot\nabla V)u_r^2dx\\
&\hspace{1cm}= \frac{1}{2}\int_{\Gamma_1\cup \Gamma_2}(z_r^1\cdot \mathbf n) |\nabla u_r^1|^2d\sigma
-\int_{\Gamma_1\cup \Gamma_2}\left(\nabla u_r\cdot \mathbf n\right)(z_r^1\cdot \nabla u_r)d\sigma\\
&\hspace{2cm} -\int_{\Gamma_1\cup \Gamma_2}(z_r^1\cdot \mathbf n)\left(\frac{Vu_r^2}{2}-\frac{\be|u_r|^p}{p}-\frac{|u_r|^q}{q}+\frac{\la}{2}|u_r|^2\right)d\sigma\\
&\hspace{1cm}=: \mathcal A_1+ \mathcal A_2
\end{aligned}
\end{equation}
where $\mathbf n$ denotes the outward unit normal vector on $\partial\left(B(z_r^1,\delta |z_r^1|)\cap B_r\right)$ and
\begin{equation}
\begin{aligned}
\mathcal A_i &:= \frac{1}{2}\int_{\Gamma_i}(z_r^1\cdot\mathbf n) |\nabla u_r^1|^2d\sigma
-\int_{\Gamma_i}\left(\nabla u_r\cdot \mathbf n\right)(z_r^1\cdot \nabla u_r)d\sigma\\
&\hspace{1cm} -\int_{\Gamma_i}(z_r^1\cdot \mathbf n)\left(\frac{Vu_r^2}{2}-\frac{\be|u_r|^p}{p}-\frac{|u_r|^q}{q}
     +\frac{\la}{2}|u_r|^2\right)d\sigma
\end{aligned}
\end{equation}
for $i=1,2$. If $\Gamma_2\neq\emptyset$, then $z_r^1\cdot \mathbf n(x)>0$ for any $x\in \Gamma_2$ by the choice of $\delta$. Note that
$\mathbf n(x)=\frac{x}{|x|}=\frac{\nabla u_r}{|\nabla u_r|}$ for any $x\in\Gamma_2$, hence
\begin{equation}\label{3.6}
\mathcal A_2=-\frac{1}{2}\int_{ \Gamma_2}(z_r^1\cdot\mathbf n) |\nabla u_r|^2d\sigma\le 0.
\end{equation}
It follows from \eqref{3.4} that there is $\tau_0>0$ independent of $r$ such that
\begin{equation}\label{3.7}
\limsup_{r\to\infty}e^{\tau_0|z_r^1|}\mathcal A_1=0.
\end{equation}
In view of Lemma \ref{profile} we have
$$\liminf_{r\to\infty}\int_{B(z_r^1,\delta|z_r^1|)\cap B_r}u_r^2dx\ge \|w^1\|_2^2>0.$$
Moreover, there holds
\begin{equation}\label{3.8}
\begin{aligned}
&\lim_{r\to\infty}e^{\tau_0|z_r^1|}\int_{B(z_r^1,\delta |z_r^1|)\cap B_r}(z_r^1\cdot\nabla V)u_r^2dx\\
&\hspace{1cm}
   \ge \lim_{r\to\infty}e^{\tau_0|z_r^1|}\left(\inf_{x\in B(z_r^1,\delta |z_r^1|)}(z_r^1\cdot\nabla V(x))\right)
          \int_{B(z_r^1,\delta|z_r^1|)\cap B_r}u_r^2dx\\
&\hspace{1cm}> 0.
\end{aligned}
\end{equation}
Now we obtain a contradiction using \eqref{3.5}, \eqref{3.6}, \eqref{3.7} and \eqref{3.8}. It follows that $l=0$ and we conclude that
$u_r\to u_0$ as $r\to \infty$ in $H^1(\R^N).$ Consequently, $u_0$ is a normalized solution of \eqref{eq}.
\end{proof}

\end{document}